\documentclass[11pt]{amsart}
\usepackage{amsmath, amsfonts, amsthm, amssymb, multicol, mathtools, dsfont, verbatim}
\usepackage{graphicx}
\usepackage{float, hyperref}
\usepackage{scalerel,stackengine, subcaption}
\usepackage[usenames,dvipsnames]{xcolor}
\usepackage{enumitem}
\usepackage{pgfplots}
\usepgfplotslibrary{fillbetween}
\pgfplotsset{width=10cm,compat=1.9}
\usepackage[square,sort,comma,numbers]{natbib}
\newcommand{\what}{\widehat}

\allowdisplaybreaks

\usepackage{fancyhdr}

\makeatletter
\def\@setauthors{%
  \begingroup
  \def\thanks{\protect\thanks@warning}%
  \trivlist
  \centering\footnotesize \@topsep30\p@\relax
  \advance\@topsep by -\baselineskip
  \item\relax
  \author@andify\authors
  \def\\{\protect\linebreak}

  \normalsize\lowercase{\authors}%
  
	\ifx\@empty\contribs
  \else
    ,\penalty-3 \space \@setcontribs
    \@closetoccontribs
  \fi
  \endtrivlist
  \endgroup
}
\def\@settitle{\begin{center}
\LARGE\lowercase{\@title}
  \end{center}%
}
\makeatother
\newcommand{\authoremail}[1]{\email{\href{mailto:#1}{\color{lightblue}{#1}}}}
\newcommand{\authoraddress}[1]{\address{\normalfont{#1}}}

\definecolor{lightblue}{HTML}{2B77A4}
\definecolor{darkred}{HTML}{9E0D0D}
\hypersetup{
	colorlinks=true,
	linkcolor=darkred,
	urlcolor=darkred,
	citecolor=lightblue
}
\numberwithin{equation}{section}

\renewcommand{\epsilon}{\varepsilon}

\newcommand{\rd}{\mathbb{R}^d}

\newcommand{\M}{\mathcal{M}}

\renewcommand{\ge}{\geqslant}
\renewcommand{\le}{\leqslant}
\renewcommand{\geq}{\geqslant}
\renewcommand{\leq}{\leqslant}

\newcommand{\ubd}{\overline{\dim}_{\textup{B}}}

\newcommand{\hd}{\dim_{\textup{H}}}

\newcommand{\fs}{\dim^\theta_{\mathrm{F}}}
\newcommand{\fd}{\dim_{\mathrm{F}}}
\newcommand{\sd}{\dim_{\mathrm{S}}}
\newcommand{\J}{\mathcal{J}}

\usepackage{xcolor}
\usepackage{pgfplots}
\pgfplotsset{compat=1.18}

\usepackage{a4wide}

\usepackage[a4paper, margin=2.3cm]{geometry}
\usepackage{amsmath,amssymb,amsthm, comment, enumitem, array}
\usepackage{color}
\usepackage{svg}
\usepackage{parskip}
\usepackage{hyperref}
\usepackage{parskip}
\usepackage{setspace}
\usepackage{graphicx}
\usepackage{mathtools}
\usepackage[thinc]{esdiff}
\usepackage[euler]{textgreek}

\usepackage{babel}

\usepackage[capitalise, nameinlink]{cleveref}
\crefname{equation}{}{}
\crefname{figure}{{\sc Figure}}{{\sc Figure}}
\crefname{subsection}{Subsection}{Subsections}

\newtheorem{theorem}{Theorem}[section]
\newtheorem{proposition}[theorem]{Proposition}
\newtheorem{remark}[theorem]{Remark}
\newtheorem{corollary}[theorem]{Corollary}
\newtheorem{lemma}[theorem]{Lemma}

\newtheorem{conjecture}[theorem]{Conjecture}
\newtheorem*{definition*}{Definition}

\newcommand{\spt}{\operatorname{supp}}

\usepackage{amsmath,amssymb,amsthm,mathtools}
\usepackage{enumitem}
\usepackage{hyperref}
\usepackage{parskip}
\hypersetup{colorlinks=true,linkcolor=blue,citecolor=blue,urlcolor=blue}

\theoremstyle{plain}

\theoremstyle{remark}

\newcommand{\R}{\mathbb{R}}
\newcommand{\Rd}{\mathbb{R}^d}
\newcommand{\Rdpo}{\mathbb{R}^{d+1}}
\newcommand{\supp}{\operatorname{supp}}

\newcommand{\cJ}{\mathcal{J}}
\newcommand{\cL}{\mathcal{L}}

\newcommand{\ang}[1]{\langle #1\rangle}

\title{On Fourier decay and the distance set problem}

\author{\large{Jonathan M. Fraser}}
\authoraddress{Jonathan M. Fraser, University of St Andrews, Scotland}
\authoremail{jmf32@st-andrews.ac.uk}
\thanks{JMF was  financially supported by an \emph{EPSRC Open Fellowship} (EP/Z533440/1) and a  \emph{Leverhulme Trust Research Project Grant} (RPG-2023-281).}

 \author{\large{Thang Pham}}
\authoraddress{Thang Pham, Institute for Mathematics and Interdisciplinary Sciences, Xidian University, China}
\authoremail{thangphammath@xidian.edu.cn}

\begin{document}


\maketitle
\thispagestyle{empty}

\begin{abstract}
We study the Falconer distance set problem in Euclidean space and obtain  improved dimensional estimates under natural  Fourier analytic assumptions cast in terms of the  Fourier dimension and spectrum.  Interestingly, under  reasonably mild  assumptions, we are able to beat the $d/2$ dimension threshold in dimensions $d \geq 5$. For example, we show that (in any ambient spatial dimension $d$) a Borel set with Fourier  dimension at least $2$ has a distance set of full Hausdorff dimension.  We also show that (in any ambient spatial dimension $d$) a Borel set with Fourier  \emph{spectrum} at least $d/4+1$ at $\theta=1/2$ has a distance set of full Hausdorff dimension.  In particular, this can hold for sets with Fourier dimension zero (provided $d \geq 4$). We also consider pinned variants of these problems and construct examples that demonstrate the sharpness (or near sharpness) of our results. 
\\ \\ 
\emph{Mathematics Subject Classification 2020}. primary: 28A80, 42B10; secondary: 28A75, 28A78.
\\
\emph{Key words and phrases}:  distance set problem, Hausdorff dimension, Fourier dimension, Fourier spectrum.
\end{abstract}

\tableofcontents

\section{Introduction}

\subsection{The distance set problem and summary of our results}

The \emph{distance set problem} is a famous and important problem in geometric measure theory.  It was introduced in an influential paper of Falconer from 1985 \cite{distance}, but can be viewed as a continuous analogue of the famous distance problem of Erd\H{o}s which asks how many distinct distances are guaranteed to  be determined by a set of $N$ points in the plane.  

Given a Borel set $E \subseteq \Rd$, the \emph{distance set} is defined by
\[
D(E) = \{|x-y| : x, y \in E\},
\]
that is, it is the set of distances realised between points in $E$. The distance set problem is then to determine how large $D(E)$ must be, given certain geometric constraints on $E$.  Here we interpret `large' in terms of Hausdorff dimension, and the geometric constraint on $E$ might simply be that it is itself large in terms of dimension.  For example, one of the most well-known versions of the problem is the conjecture that 
\begin{equation} \label{distanceproblem}
\hd E \geq d/2 \ \Rightarrow \  \hd D(E) = 1.
\end{equation}
This conjecture is open for all $d \geq 2$ despite sustained interest over several decades and much recent progress, see \cite{1/4even,1/4+,yumeng,guth,keletishmerkin,shmerkinwang}. It is known, and important to keep in mind for this paper, that  the threshold $d/2$ in (\ref{distanceproblem}) cannot be improved, see \cite{distance}.  That is, there exist (compact) $E \subseteq \rd$ with $\hd E$  arbitrarily close to (but smaller than) $d/2$, but yet $\hd D(E)<1$.  The sets used to demonstrate the $d/2$ threshold are very rigid and possess a lot of arithmetic resonance.  These features mean that the Fourier analytic behaviour of measures supported on these examples must be very bad.  This perhaps suggests that, under mild Fourier analytic assumptions, the threshold $d/2$ might be lowered.  This is our main objective in this paper.  In the opposite direction from the rigid examples described above, it is natural    to consider sets with optimal Fourier decay, namely, Salem sets (see the next section for a precise definition).  Mattila proved in  \cite{mattiladistance} that the distance set problem \eqref{distanceproblem} can be resolved  for Salem sets in the sense that if $E$ is Salem, then the conjectured implication
\[
\hd E \geq d/2 \Rightarrow \hd D(E) = 1
\]
indeed holds. We show that this result can be dramatically improved in high dimensions by proving that if $E$ is Salem, then
\[
\hd E \geq  2 \Rightarrow \hd D(E) = 1,
\]
see {\bf Theorem \ref{thm-theta-zero}} and {\bf Corollaries \ref{cor2} and \ref{cor3}}. In particular, our result beats the $d/2$ threshold for $d \geq 5$.  This result  is a special case of more general results which require much weaker Fourier analytic assumptions.  These assumptions are cast in terms of the Fourier spectrum and give concrete, checkable, and relatively mild conditions ensuring that the distance set of a given set is of full Hausdorff dimension or even has positive 1-dimensional Lebesgue measure, see  {\bf Corollaries \ref{cormain} and \ref{cor2}}.  

We also consider the more general problem where one studies the distances realised between two given sets, see our {\bf main Theorem \ref{thm-ref}} as well as {\bf   Corollary \ref{cor1/2}}, and  the problem where one fixes a `pin' and considers only distances realised between the pin and other points.  More precisely, given Borel sets $E,F \subseteq \rd$, we define the \emph{distance set}
\[
D(E,F) = \{ |x-y| : x \in E, y \in F\}.
\]
Then $D(E,E) = D(E)$ and 
\[
D(\{x\}, E) =: D_x(E)
\]
is the \emph{pinned} distance set for $x \in \rd$.  If $x \in E$, then $D_x(E) \subseteq D(E)$ and \emph{a priori} the pinned distance set should be much smaller than the full distance set.  In particular, there is interest in proving the stronger conclusion that, for example, there exists $x \in E$ such that $\hd D_x(E) = 1$ or that $\hd D_x(E) = 1$ for `many' pins $x \in E$. Our results are of this type and we are often able to find a set of pins $E' \subseteq E$ of positive Hausdorff dimension where the desired conclusion holds.

\subsection{Hausdorff dimension, the Fourier spectrum and previous work}

In this section we briefly recall and discuss our key notions of dimension, including the Hausdorff dimension, Fourier dimension, and the Fourier spectrum.  

In order to define the Hausdorff dimension, first define the  \emph{$s$-energy} of a finite Borel measure $\mu$ on $\rd$  by
\begin{equation*}
		I_{s}(\mu) = \iint |x-y|^{-s}\,d\mu(x)\,d\mu(y).
\end{equation*}
The \emph{Hausdorff dimension} of a Borel set $E$ may then be defined by
\begin{equation*}
		\hd E = \sup\{ s\geq0 : \exists \mu\in\M(E) : I_{s}(\mu)<\infty \}
\end{equation*}
where $\M(E)$ denotes the set of finite, non-zero Borel measures with support contained in $E$. Deriving the Hausdorff dimension via this alternative definition is often referred to as the \emph{potential theoretic method};  see \cite{falconer,mattila} for more details.  These energy integrals can be expressed in terms of the Fourier transform, and this connection opens up a rich interplay between fractal geometry and Fourier analysis. More precisely, the $s$-energy of $\mu\in\M(\rd)$, for $s\in(0,d)$, satisfies
\begin{equation} \label{energyequiv}
		I_{s}(\mu) =C({d,s}) \int \big| \widehat{\mu}(z) \big|^2 |z|^{s-d}\,dz
\end{equation}
where $C(d,s)$ is a constant depending only on $s$ and $d$. This relationship between the Hausdorff dimension of sets and the Fourier transform of measures they support motivates the definition of the \emph{Fourier dimension} of a finite Borel measure.  This captures the optimal decay rate of the Fourier transform and is defined by
\begin{equation*}
		\fd \mu = \sup\big\{ s\geq0 : \sup_z\big| \widehat{\mu}(z) \big|^2  |z|^{s} < \infty \big\}.
\end{equation*}
The \emph{Fourier dimension} of a Borel set  $E\subseteq\rd$ is then defined by 
\begin{equation*}
		\fd E = \sup\big\{ \min\{ \fd \mu,  d\} : \mu\in\M(E) \big\}.
\end{equation*}
By leveraging  a uniform decay estimate for the Fourier transform in the Fourier analytic expression for energy, one can easily check that
\[
0\leq\fd E\leq\hd E \leq d.
\]
Moreover,   these inequalities can be strict in any combination. Sets for which the Fourier and Hausdorff dimensions coincide are called \emph{Salem sets} and such sets can be thought of as having `optimal Fourier analytic properties'.

The Fourier spectrum was introduced by   Fraser  in \cite{JMF} to interpolate between the Fourier   and Hausdorff dimensions. As observed above, the Fourier dimension concerns $L^\infty$ decay of the Fourier transform, and the Hausdorff dimension concerns  $L^2$ decay.   If the Fourier  and Hausdorff dimensions are distinct, then it is natural  to try to understand the decay in an appropriate $L^p$ sense and this is what the Fourier spectrum does.

In order to define the Fourier spectrum,  first define \emph{$(s,\theta)$-energies} of  $\mu$  by
\begin{equation*}
	\J_{s,\theta}(\mu) = \bigg( \int_{\rd} \big| \widehat{\mu}(z) \big|^{\frac{2}{\theta}}|z|^{\frac{s}{\theta}-d}\,dz \bigg)^\theta,
\end{equation*}
for $\theta \in (0,1]$ 
and, for $\theta = 0$, by
\begin{equation*}
	\J_{s,0}(\mu) = \sup_{z\in\rd} \big| \widehat{\mu}(z) \big|^2|z|^{s}.
\end{equation*}
Then the \emph{Fourier spectrum} of $\mu$ at $\theta \in [0,1]$ is defined by
\begin{equation*}
	\fs \mu = \sup\{ s \geq 0 : \J_{s,\theta}(\mu)<\infty \},
\end{equation*}
and, for each $\theta\in[0,1]$, $\fd \mu\leq\fs \mu\leq\sd \mu \leq \hd \mu$, with equality on the left if $\theta= 0$ and equality on the right if $\theta = 1$. Here $\sd \mu$ denotes the Sobolev (or energy) dimension of $\mu$ and $\hd \mu$ denotes the Hausdorff dimension of $\mu$ defined by
\[
\hd \mu = \inf\{ \hd E : \mu(E)>0\}.
\]
We write $\spt \mu$ for the support of $\mu$ and then
\[
\hd \mu \leq \hd \spt \mu.
\]
As a function of $\theta$, $\fs\mu$ is concave and continuous for $\theta\in(0,1]$ by \cite[Theorem~1.1]{JMF} and, in addition,  continuous at $\theta=0$ provided $\mu$ is compactly supported by \cite[Theorem~1.3]{JMF}. For  a Borel set $E\subseteq\rd$, the \emph{Fourier spectrum} is defined by
\begin{equation*}
	\fs  E = \sup\big\{ \min\{\fs \mu, d\} : \mu\in\M(E) \big\}.
\end{equation*}
Then,  for all $\theta\in[0,1]$, $\fd E\leq\fs  E \leq \hd E$, with equality on the left if $\theta= 0$ and equality on the right if $\theta = 1$. Moreover,  $\fs E$ is continuous for all $\theta\in[0,1]$ by \cite[Theorem~1.5]{JMF}.  Unlike for measures, $\fs E$ need not be concave.

The main purpose of the Fourier spectrum is to provide a more nuanced quantitative description of the Fourier analytic properties of sets and measures and to leverage this description to make progress on various problems where there is an interplay between Fourier analysis and fractal geometry.  The Fourier spectrum has already found numerous applications of this type, often leading to  stronger results than one can get by appealing to the Fourier dimension alone.  These applications include new Hausdorff dimension estimates in the Falconer distance problem \cite[Section 7]{JMF} and the celebrated restriction problem in harmonic analysis \cite{restriction}. In particular, the following was obtained in \cite{JMF}.  Here and throughout, we write $\mathcal{L}^1$ for 1-dimensional Lebesgue measure on $\mathbb{R}$ and note that $\mathcal{L}^1(E)>0$ is a strictly stronger conclusion than $\hd E = 1$. 

\begin{theorem} [\cite{JMF}, Thm.~7.3]
\label{jfthm}
Suppose $\mu_1$ and $\mu_2$ are finite Borel measures on $\mathbb{R}^d$ with
\[
\mathcal{J}_{u,\theta}(\mu_1)<\infty
\qquad\text{and}\qquad
\mathcal{J}_{v,1-\theta}(\mu_2)<\infty
\]
for some $u,v\ge 0$ and $\theta\in(0,1]$. If $u+v\ge d$, then
\[
\mathcal{L}^1\!\big(D(\spt \mu_1,\spt \mu_2)\big)>0,
\]
and, if $u+v<d$, then
\[
\hd D(\spt\mu_1,\spt \mu_2)\ \ge\ 1-d+u+v.
\]
\end{theorem}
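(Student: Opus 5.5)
The plan is to follow Mattila's approach via the distance measure. Let $\nu$ be the pushforward of $\mu_1\times\mu_2$ under $(x,y)\mapsto|x-y|$, so that $\nu$ is supported on $D(\spt\mu_1,\spt\mu_2)$. Two facts reduce everything to Fourier bounds on $\nu$:
\begin{itemize}
\item If $\nu\in L^2$, equivalently $\int|\widehat\nu(t)|^2\,dt<\infty$, then $\mathcal{L}^1(\spt\nu)>0$.
\item If the one-dimensional energy $\int|\widehat\nu(t)|^2|t|^{\sigma-1}\,dt$ is finite, then $\hd\spt\nu\ge\sigma$.
\end{itemize}
We may assume the measures are compactly supported, after restricting to a ball. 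The distances can then be taken bounded away from $0$ on a piece, so that weights $r^{\alpha}$ are harmless.

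The next step is to compute $\widehat\nu$. Write $\nu(r)=r^{d-1}\int_{S^{d-1}}(\mu_1*\tilde\mu_2)(r\omega)\,d\sigma(\omega)$, where $\tilde\mu_2$ is the reflection of $\mu_2$. Using $\widehat{\mu_1*\tilde\mu_2}=\widehat{\mu_1}\,\overline{\widehat{\mu_2}}$, Mattila's standard identity gives an $L^2$-type bound (modulo smooth truncation):
\[
\int |\widehat\nu(t)|^2|t|^{\sigma-1}\,dt \lesssim \int_0^\infty \Big|\int_{S^{d-1}}\widehat{\mu_1}(r\omega)\overline{\widehat{\mu_2}(r\omega)}\,d\sigma(\omega)\Big|^2 r^{\,d-1+(d-1)+\sigma-d}\,dr.
\]
This is the spherical-average form. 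More simply, I would use the cruder route of \cite{JMF}-style arguments: Cauchy–Schwarz over the sphere gives
\[
\Big|\int_{S^{d-1}}\widehat{\mu_1}\overline{\widehat{\mu_2}}\,d\sigma\Big|\le\int_{S^{d-1}}|\widehat{\mu_1}(r\omega)||\widehat{\mu_2}(r\omega)|\,d\sigma.
\]
Together with a trivial bound on one factor's spherical average, this reduces the task to controlling $\int_{\rd}|\widehat{\mu_1}(z)||\widehat{\mu_2}(z)||z|^{\sigma-1}\,dz$. Equivalently, I would work directly with the weak form. For a test function $\phi$ on $\R$, $\int\phi\,d\nu=\int\phi(|z|)\,d(\mu_1*\tilde\mu_2)(z)$. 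So $\nu\in L^2$ follows if $\mu_1*\tilde\mu_2$ has radial projections in $L^2$. The cleanest version is to bound $\|\nu\|_2$ via Plancherel in polar coordinates by $\int|\widehat{\mu_1}(z)||\widehat{\mu_2}(z)||z|^{-(d-1)/2\cdot 0}\cdots$. I would follow Mattila's known bound
\[
\int|\widehat\nu|^2|t|^{\sigma-1}\,dt\lesssim \int_{\rd}|\widehat{\mu_1}(z)\widehat{\mu_2}(z)|\,|z|^{\sigma-1}\,dz,
\]
valid up to constants for compactly supported measures. This uses $L^2$ boundedness of the spherical averaging together with decay of $\widehat{\sigma}$. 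Establishing this reduction is the main obstacle; I would first try to derive it from the Mattila integral $\int_1^\infty\big(\int_{S^{d-1}}|\widehat{\mu_1}(r\omega)|^2d\sigma\big)^{1/2}\big(\int_{S^{d-1}}|\widehat{\mu_2}(r\omega)|^2d\sigma\big)^{1/2}r^{d-1}\cdots dr$.

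Granting this, the rest is Hölder. Take exponents $1/\theta$ and $1/(1-\theta)$, and split $|z|^{\sigma-1}=|z|^{a}|z|^{b}$ with
\[
a=\tfrac{u}{2}-d\theta,\qquad b=\tfrac{v}{2}-d(1-\theta).
\]
Hölder then gives
\[
\int|\widehat{\mu_1}||\widehat{\mu_2}||z|^{a+b}\,dz\le\Big(\int|\widehat{\mu_1}|^{1/\theta}|z|^{a/\theta}\Big)^{\theta}\Big(\int|\widehat{\mu_2}|^{1/(1-\theta)}|z|^{b/(1-\theta)}\Big)^{1-\theta}.
\]
The exponents $|\widehat{\mu_i}|$ versus $|\widehat{\mu_i}|^2$ differ from the $\J$ definitions, so I would instead apply Cauchy–Schwarz on the sphere first. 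This produces $|\widehat{\mu_1}|^2$ and $|\widehat{\mu_2}|^2$ with total weight $r^{\sigma-1+(d-1)}$ arranged so that Hölder yields exactly
\[
\J_{u,\theta}(\mu_1)^{1/2}\,\J_{v,1-\theta}(\mu_2)^{1/2}.
\]
Finiteness holds provided the weight exponent satisfies $\sigma-1\le u+v-d$, i.e.\ $\sigma\le 1-d+u+v$, with $\sigma=1$ allowed when $u+v\ge d$, since larger $u,v$ only help on $|z|\ge1$ and low frequencies are bounded.

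Both conclusions then follow. When $u+v\ge d$ we take $\sigma=1$, which gives $\nu\in L^2$ and hence positive Lebesgue measure. When $u+v<d$ we take $\sigma=1-d+u+v$, which gives the Hausdorff dimension bound.
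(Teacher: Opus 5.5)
The paper does not prove this statement. It is quoted from \cite{JMF}, so there is no internal argument to compare with. Judged on its own, the route you eventually settle on is correct. It runs through Mattila's spherical-average criterion (with the weight $r^{d-2+\sigma}$ you wrote), then Cauchy--Schwarz on $S^{d-1}$ to reach $\Sigma_1(r)\Sigma_2(r)$, where $\Sigma_i(r)=\int_{S^{d-1}}|\widehat{\mu_i}(r\omega)|^2\,d\omega$, and then H\"older in $r$.

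The ``cruder route'', however, should be deleted, not repaired. Bounding $|\int_{S^{d-1}}\widehat{\mu_1}\overline{\widehat{\mu_2}}\,d\omega|^2$ by $\int_{S^{d-1}}|\widehat{\mu_1}||\widehat{\mu_2}|\,d\omega$ is the triangle inequality plus $|\widehat{\mu_i}|\le\mu_i(\Rd)$, not Cauchy--Schwarz. It leaves an integrand linear in each $|\widehat{\mu_i}|$. H\"older against the $\mathcal{J}$-integrands $|\widehat{\mu_1}|^{2/\theta}$ and $|\widehat{\mu_2}|^{2/(1-\theta)}$ then only places the product in a weighted $L^2$ space, and yields merely $\sigma<1-d+\frac{u+v}{2}$. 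So the reduction you call the main obstacle is a dead end. The only deep input is Mattila's theorem itself, which you are citing.

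Your weights $\frac u2-d\theta$ and $\frac v2-d(1-\theta)$ also belong to the abandoned route. The correct bookkeeping is as follows. For $r\ge1$ and $\sigma\le 1-d+u+v$ one has $r^{d-2+\sigma}\le r^{u-\theta}\,r^{v-(1-\theta)}$. Apply H\"older with exponents $1/\theta$ and $1/(1-\theta)$, then Jensen's inequality $\Sigma_1(r)^{1/\theta}\le\int_{S^{d-1}}|\widehat{\mu_1}(r\omega)|^{2/\theta}\,d\omega$ (likewise for $\mu_2$), then polar coordinates. This gives
\[
\int_1^\infty \Sigma_1(r)\,\Sigma_2(r)\,r^{d-2+\sigma}\,dr\lesssim \mathcal{J}_{u,\theta}(\mu_1)\,\mathcal{J}_{v,1-\theta}(\mu_2),
\]
with no square roots. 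For $\theta=1$, use $\Sigma_2(r)\le \mathcal{J}_{v,0}(\mu_2)\,r^{-v}$ instead.

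The reduction to compact support should use a smooth cutoff $\phi\mu_i$, not restriction to a ball. Finiteness of $\mathcal{J}$ survives a smooth cutoff because $\widehat{\phi\mu_i}=\widehat\phi*\widehat{\mu_i}$ and Peetre's inequality applies. A sharp cutoff does not obviously preserve it.

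For comparison, Mattila's theorem is more than this statement needs. The same H\"older split applied directly to $\widehat{\mu_1*\widetilde\mu_2}=\widehat{\mu_1}\,\overline{\widehat{\mu_2}}$ gives
\[
\int_{\Rd}|\widehat{\mu_1}(z)|^2\,|\widehat{\mu_2}(z)|^2\,|z|^{u+v-d}\,dz\le \mathcal{J}_{u,\theta}(\mu_1)\,\mathcal{J}_{v,1-\theta}(\mu_2).
\]
Hence $\mu_1*\widetilde\mu_2\in L^2$ if $u+v\ge d$ (the region $|z|<1$ is harmless), and $I_{u+v}(\mu_1*\widetilde\mu_2)<\infty$ if $u+v<d$.

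This measure is carried by $A=\spt\mu_1-\spt\mu_2$, and $D(\spt\mu_1,\spt\mu_2)=\{|z|:z\in A\}$. Moreover, $A\setminus\{0\}$ lies in the image of $D(\spt\mu_1,\spt\mu_2)\times S^{d-1}$ under the locally Lipschitz map $(r,\omega)\mapsto r\omega$. Polar coordinates therefore give $\mathcal{L}^1(D(\spt\mu_1,\spt\mu_2))>0$ in the first case. In the second case they give $\hd D(\spt\mu_1,\spt\mu_2)\ge \hd A-(d-1)\ge 1-d+u+v$.

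So $1-d+u+v$ is just the trivial loss from the $(d-1)$-dimensional spherical fibres. Once Cauchy--Schwarz and Jensen have discarded the angular information, the spherical-average route reaches the same conclusion with no gain.
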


This extends Mattila’s result, which relies only on Fourier dimension. Indeed, a simple consequence of the above is that 
\begin{equation} \label{fsat1/2}
\dim_{\textup{F}}^{1/2} E \geq d/2 \Rightarrow \hd D(E) = 1
\end{equation}
 holds. In particular, this solves the distance set conjecture at the $d/2$ threshold for a much larger class of sets than Salem sets. Indeed, a set $E$ may have Fourier dimension 0, but still have  $\dim_{\textup{F}}^{1/2} E = d/2 $.  In this paper we significantly improve the above theorem, for example, replacing the $d/2$ threshold in \eqref{fsat1/2} with $d/4+1$, see {\bf Corollary \ref{cor3}}.

\section{Main results} \label{sec:results}

In this section we state our main results.  First we give a general result and then we state some  corollaries which may be simpler to digest and to compare with the literature.

\begin{theorem}\label{thm-ref}
Let \(d\ge 2\), and let \(\mu_1,\mu_2\) be probability measures on \(\Rd\) with
compact supports. Let $\theta_1,\theta_2 \in [0,1]$ and write
\[
u = \dim_\textup{F}^{\theta_1} \mu_1
\]
and
\[
a=\min\!\left\{\frac d2,\,\dim_\textup{F}^{\theta_2} \mu_2, \, \frac{\dim_\textup{F}^{\theta_2} \mu_2}{2\theta_2}\right\}.
\]
Assume $u \leq d$ and define
\[
\beta(u):=
\begin{cases}
\dfrac{u+2\theta_1 a-d\theta_1}{2},
& 0\le u<d\theta_1,\\[2ex]
\dfrac{ua}{d},
& d\theta_1\le u\le d.
\end{cases}
\]
If \(\beta(u)>0\), then
\[
\hd D_x(\spt\mu_2)\ge \min\{1,\beta(u)\},
\]
for $\mu_1$ almost all $x$ and, if \(\beta(u)>1\), then
\[
\cL^1\bigl(D_x(\spt\mu_2)\bigr)>0
\]
for $\mu_1$ almost all $x$. In particular,
\[
\hd D(\spt\mu_1,\spt\mu_2)\ge \min\{1,\beta(u)\},
\]
and if \(\beta(u)>1\), then
\[
\cL^1\bigl(D(\spt\mu_1,\spt\mu_2)\bigr)>0.
\]
\end{theorem}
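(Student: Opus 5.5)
The plan is to follow the pushforward approach of Mattila and Liu, with the Fourier spectrum replacing pointwise decay. For \(x\in\Rd\) let \(\nu_x=(d_x)_\#\mu_2\), where \(d_x(y)=|x-y|\); then \(\spt\nu_x\subseteq D_x(\spt\mu_2)\). It suffices to show, for every \(s<\min\{1,\beta(u)\}\), that
\[
\int I_s(\nu_x)\,d\mu_1(x)<\infty .
\]
This gives \(I_s(\nu_x)<\infty\) for \(\mu_1\)-a.e.\ \(x\), hence the dimension bound. When \(\beta(u)>1\), it suffices to show that \(\int\|\nu_x\|_{L^2}^2\,d\mu_1(x)<\infty\), which makes \(\nu_x\) absolutely continuous with \(L^2\) density, so \(\cL^1(D_x(\spt\mu_2))>0\). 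The unpinned statements follow immediately, since \(D_x(\spt\mu_2)\subseteq D(\spt\mu_1,\spt\mu_2)\) for \(x\in\spt\mu_1\). Both quantities are controlled by the same expression.

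\textbf{Step 1: rewrite the averaged energy in frequency.} Mollify \(\mu_2\) at scale \(\delta\) and work on a smooth, even extension of \(\nu_x\) to \(\R\). Use \(|\widehat{\nu_x}(t)|^2|t|^{s-1}\) together with the identity
\[
\nu_x(r)=r^{d-1}\,(\mu_2*\sigma)_r(x),\qquad (\mu_2*\sigma)_r(x)=\int\widehat{\mu_2}(\xi)\,\widehat{\sigma}(r\xi)\,e^{2\pi i x\cdot\xi}\,d\xi .
\]
Integrating \(|\cdot|^2\) against \(\mu_1\) produces the bilinear form
\[
\iint \widehat{\mu_2}(\xi)\,\overline{\widehat{\mu_2}(\eta)}\;\widehat{\mu_1}(\eta-\xi)\;K_s(\xi,\eta)\,d\xi\,d\eta .
\]
The kernel \(K_s\) comes from the \(r\)- and \(t\)-integration of \(\widehat\sigma(r\xi)\overline{\widehat\sigma(r\eta)}\). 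Stationary phase, or the asymptotics of Bessel functions, shows that \(K_s\) is essentially supported where \(\big||\xi|-|\eta|\big|\lesssim 1\). There it has size about \(|\xi|^{s-1}\) times the decay factor \(|\xi|^{-(d-1)}\) coming from the two spheres, up to \(\delta\)-independent constants.

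\textbf{Step 2: decompose and estimate.} Decompose dyadically into \(|\xi|\sim|\eta|\sim R=2^j\) (the off-diagonal pieces decay rapidly), and on each annulus apply a Schur/Hölder bound.
\begin{itemize}
\item For the \(\mu_2\) factor, I need a spherical-average-type estimate \(\int_{|\xi|\sim R}|\widehat{\mu_2}|^2\lesssim R^{d-a+\epsilon}\) together with a matching \(L^{2/\theta_2}\) bound on the annulus. These follow from \(\J_{t,\theta_2}(\mu_2)<\infty\) for \(t<\fs\mu_2\) with \(\theta=\theta_2\), by Hölder on annuli.
\begin{itemize}
\item Hölder with the trivial bound \(|\widehat{\mu_2}|\le1\) gives the term \(\fs\mu_2/(2\theta_2)\) in \(a\).
\item Hölder against the volume of the annulus gives the term \(\fs\mu_2\).
\item The cap \(d/2\) in \(a\) is where the sphere decay, rather than \(\mu_2\), becomes the bottleneck.
\end{itemize}
\item For the \(\mu_1\) factor, I need Schur norms of the kernel \(\widehat{\mu_1}(\eta-\xi)\) restricted to \(|\eta-\xi|\lesssim R\). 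These are controlled by \(\int_{|\zeta|\lesssim R}|\widehat{\mu_1}(\zeta)|^{p}\,d\zeta\) via \(\J_{u-\epsilon,\theta_1}(\mu_1)\). Hölder in \(\zeta\) with exponent \(2/\theta_1\) gives \(\int_{|\zeta|\le R}|\widehat{\mu_1}|\lesssim R^{d-u/2}\)-type bounds when \(u\ge d\theta_1\). When \(u<d\theta_1\) one must interpolate with the trivial bound \(|\widehat{\mu_1}|\le1\).
\end{itemize}
These two regimes produce the two branches of \(\beta(u)\): the linear formula \((u+2\theta_1a-d\theta_1)/2\), and \(ua/d\), which matches it at \(u=d\theta_1\). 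Summing the dyadic pieces, the resulting geometric series converges precisely when \(s<\beta(u)\) (with \(s=1\) corresponding to the \(L^2\) estimate), uniformly in \(\delta\).

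\textbf{Main obstacle.} The hard step is the Schur/Hölder bookkeeping in Step 2. It must extract simultaneously the correct gain from \(\mu_1\) through the \(L^{2/\theta_1}\) spectrum (including the split at \(u=d\theta_1\)) and from \(\mu_2\) on thin annuli (yielding exactly the three-way minimum in \(a\)), while keeping every bound uniform in the mollification parameter \(\delta\). I would first prove the case \(\theta_1=0\), i.e.\ pointwise decay of \(\widehat{\mu_1}\), where the Schur test is transparent, and then replace the sup-bound by the Hölder bound to obtain general \(\theta_1\).
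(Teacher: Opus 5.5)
\textbf{Verdict: there is a genuine gap.} Your Step 1 is a legitimate reformulation: after passing to squared distances, the $\mu_1$-averaged energy is exactly a bilinear form of this type. But Step 2, which you flag as the main obstacle, is the entire proof, and the ingredients you list do not yield $\beta(u)$.

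A Schur/H\"older argument places absolute values on $\widehat{\mu_2}(\xi)\overline{\widehat{\mu_2}(\eta)}\widehat{\mu_1}(\eta-\xi)$, and the straightforward version of your scheme is far too weak. Test it at $\theta_1=\theta_2=\frac12$, $\mu_1=\mu_2=\mu$, $u=\dim^{1/2}_{\mathrm F}\mu\le d/2$, where the theorem gives $\beta=u-\frac d4$. Use the following three inputs:
\begin{enumerate}
\item kernel size $R^{s-d}$ on $\{|\xi|\sim R,\ \big||\xi|-|\eta|\big|\lesssim 1\}$;
\item $\int_{|\xi|\sim R}|\widehat{\mu}|^2\lesssim R^{d-u+\epsilon}$;
\item Schur norm $\sup_\xi\int_{||\eta|-|\xi||\lesssim 1}|\widehat{\mu}(\eta-\xi)|\,d\eta\lesssim R^{\frac{d-2u}{4}+\frac{3(d-1)}{4}}$, by H\"older against $\mathcal J_{u,1/2}(\mu)$ on a unit shell through the origin.
\end{enumerate}
With these, the dyadic sum converges only for $s<\frac{3u}{2}+\frac34-d$. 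This is negative for every $u\le d/2$ once $d\ge4$. Using balls $|\eta-\xi|\lesssim R$, as you propose, is worse still.

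Your accounting of the constants is also not tied to real estimates.
\begin{itemize}
\item The three-way minimum in $a$ is exactly the range of $\gamma$ for which $\mathcal J_{v,\theta_2}(\mu_2)$ controls the \emph{quartic} quantity $\int|\widehat{\mu_2}(\xi)|^4|\xi|^{2\gamma-d}\,d\xi\approx I_{2\gamma}(\mu_2*\widetilde\mu_2)$. For $\theta_2\ge\frac12$, the comparison $|\widehat{\mu_2}|^4\le|\widehat{\mu_2}|^{2/\theta_2}$ gives $v/(2\theta_2)$. For $\theta_2<\frac12$, H\"older against volume gives $v$. The cap $d/2$ is the requirement $2\gamma<d$ for the energy identity.
\item An $L^2$ annular bound from $\mathcal J_{v,\theta_2}(\mu_2)$ is $R^{d-v}$, so no $L^2$ argument produces the term $v/(2\theta_2)$.
\item H\"older gives $\int_{|\zeta|\le R}|\widehat{\mu_1}|\lesssim\mathcal J_{u,\theta_1}(\mu_1)^{1/2}R^{d-u/2}$ for \emph{every} $\theta_1$, so this cannot be the source of the split at $u=d\theta_1$.
\end{itemize}

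\textbf{The missing idea is to keep $\mu_2$ in physical space.} The paper lifts $\mu_2$ to the paraboloid, $\nu=F_\#\mu_2$ with $F(y)=(y,|y|^2)$. Then $A(\tau)=\int|\widehat{\delta^2_{x,\mu_2}}(\tau)|^2\,d\mu_1(x)=\int f_\tau\,d\mu_1$ with $f_\tau=\chi\,|\widehat\nu(\tau(-2\,\cdot\,,1))|^2$. It uses the duality bound $|\int f\,d\mu_1|\le\mathcal J_{u,\theta_1}(\mu_1)^{1/2}\,\|\widehat f\,|\xi|^{(\theta_1 d-u)/2}\|_{L^{2/(2-\theta_1)}}$, together with the representation $\widehat f_\tau(\xi)=\iint e^{-2\pi i\tau(|y|^2-|z|^2)}\widehat\chi(\xi-2\tau(y-z))\,d\mu_2(y)\,d\mu_2(z)$. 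This representation gives two estimates:
\begin{itemize}
\item $\|\widehat f_\tau\|_{L^1}\le\|\widehat\chi\|_{L^1}$ uniformly in $\tau$. This total-mass bound is destroyed as soon as $\mu_2$ is expanded in frequency and absolute values are taken.
\item For $u<d\theta_1$, $\int|\widehat f_\tau|^2|\xi|^{d-u/\theta_1}\,d\xi\lesssim(1+|\tau|)^{d-u/\theta_1-2\gamma}I_{2\gamma}(\mu_2*\widetilde\mu_2)$.
\end{itemize}
Interpolating the two yields $A(\tau)\lesssim(1+|\tau|)^{-(u+2\theta_1\gamma-d\theta_1)/2}$.

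The branch $ua/d$ does not come from a separate H\"older regime. It comes from monotonicity of $\theta\mapsto\dim^\theta_{\mathrm F}\mu_1$: replace $\theta_1$ by $\eta=u_0/d$ with $d\theta_1<u_0<u$, and apply the first branch. Without an estimate of this strength, your Step 2 remains a heuristic. In particular, the claim that the dyadic series converges exactly when $s<\beta(u)$ is unsupported.
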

\begin{remark}
By convention, for $\theta_2=0$ we interpret
\[
\frac{\dim_{\mathrm F}^{\theta_2}\mu_2}{2\theta_2}=+\infty.
\]
Hence, in this case
\[
a=\min\left\{\frac d2,\dim_{\mathrm F}^{0}\mu_2, \infty\right\}
=\min\left\{\frac d2,\dim_{\mathrm F}\mu_2\right\}.
\]
\end{remark}

We prove Theorem \ref{thm-ref} in Section \ref{sec:proof}. The assumption that $u \leq d$ is for convenience and if $u >d$ then we can take $\beta(u) = a$. The case $u>d$ is not so interesting since  by \cite[Theorem 5.4]{mattila} the $d$-dimensional Lebesgue measure of $\spt \mu_1$ is positive and then it is easy to show that (for example) the 1-dimensional Lebesgue measure of $D(\spt \mu_1, \spt \mu_2)$ is also positive.  

The special case of the above when $\theta_1=\theta_2=1/2$ is notable. Partly because it makes it easy to compare with Theorem \ref{jfthm}, but also because this is the regime in which our proof approach works most directly.  We will also scrutinise the `near sharpness' of this result in Section \ref{sec:examples}.   We  assume $\dim_\mathrm{F}^{1/2} \mu_1 , \dim_\mathrm{F}^{1/2} \mu_2\leq d/2$ in the following since this is the case of interest and it also simplifies the exposition. 

\begin{corollary} \label{cor1/2}
    Let \(d\ge 2\), and let \(\mu_1,\mu_2\) be probability measures on \(\Rd\) with
compact supports. Further,   assume both $\dim_\mathrm{F}^{1/2} \mu_1 \leq d/2$ and $\dim_\mathrm{F}^{1/2} \mu_2\leq d/2$.  Then
\[
\hd D_x(\spt\mu_2)\ge \min\left\{ 1, \, \frac{\dim_\mathrm{F}^{1/2} \mu_1+\dim_\mathrm{F}^{1/2} \mu_2}{2}- \frac{d}{4}\right\}
\]
for $\mu_1$ almost all $x$.
\end{corollary}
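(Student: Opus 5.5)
This is a direct specialisation of Theorem \ref{thm-ref} with $\theta_1=\theta_2=1/2$. Write $u=\dim_\mathrm{F}^{1/2}\mu_1$ and $s=\dim_\mathrm{F}^{1/2}\mu_2$.

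First we compute $a$. With $\theta_2=1/2$ we have $s/(2\theta_2)=s$, so
\[
a=\min\{d/2,\,s,\,s\}=s,
\]
where the last equality uses the hypothesis $s\le d/2$.

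Next we determine the case of $\beta$. The threshold is $d\theta_1=d/2$, and the hypothesis gives $u\le d/2\le d$. So $u\le d$ holds, and we are in the first case except possibly at the endpoint $u=d/2$. In the first case,
\[
\beta(u)=\frac{u+2\cdot\tfrac12 s-\tfrac d2}{2}=\frac{u+s}{2}-\frac d4 .
\]
At $u=d/2$ the second case gives $\beta=ua/d=s/2$. The first-case formula gives $d/4+s/2-d/4=s/2$ as well. So the two formulas agree at the endpoint, $\beta$ is continuous there, and
\[
\beta(u)=\frac{u+s}{2}-\frac d4
\]
for every $u\in[0,d/2]$.

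We then conclude in two cases.
\begin{itemize}
\item If $\beta(u)>0$, Theorem \ref{thm-ref} gives $\hd D_x(\spt\mu_2)\ge\min\{1,\beta(u)\}$ for $\mu_1$ almost every $x$, which is the claim.
\item If $\beta(u)\le 0$, the claimed lower bound $\min\{1,\beta(u)\}$ is at most $0$. The inequality then holds trivially because Hausdorff dimension is nonnegative. We only need $D_x(\spt\mu_2)$ to be nonempty, which is true since $\mu_2$ is a probability measure and so has nonempty support.
\end{itemize}

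The only point needing care is the boundary case $u=d/2$ in the piecewise definition of $\beta$, which the continuity check above handles.
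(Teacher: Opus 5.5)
Your proposal is correct and is exactly how the paper obtains this corollary: it is the specialisation $\theta_1=\theta_2=1/2$ of Theorem \ref{thm-ref}. There $s\le d/2$ forces $a=s$, and $u\le d/2$ puts you in the first branch of $\beta$; your endpoint check at $u=d/2$ agrees with the paper's proof, which in fact handles $0<u\le d\theta_1$ with the first-branch formula. Your treatment of the case $\beta(u)\le 0$, via nonnegativity of the Hausdorff dimension of the nonempty set $D_x(\spt\mu_2)$, correctly covers the situation where the theorem's hypothesis $\beta(u)>0$ fails.
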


Note that the previous corollary gives a (trivial)  lower bound of zero in the case $\dim_\mathrm{F}^{1/2} \mu_1+\dim_\mathrm{F}^{1/2} \mu_2 = d/2$.    Perhaps surprisingly, this is sharp, see  Proposition \ref{ex1} where we construct measures with  dimensions satisfying this equality for which there is only a single distance realised in $D(\spt\mu_1,\spt\mu_2)$.

Theorem \ref{thm-ref} works especially well at $\theta=1/2$ and generally for larger $\theta$.  In another direction, we obtain the following theorem which works especially well at $\theta=0$ and generally for small $\theta$.  

 \begin{theorem}\label{thm-theta-zero}
Let \(d\ge 2\), and let \(\mu\) be a compactly supported probability
measure on \(\Rd\).
\begin{enumerate}
    \item For \(\mu\)-almost all \(x\in\Rd\),
    \[
    \hd D_x(\spt \mu)\ge \min\left\{\frac{\fd\mu}{2},1\right\}.
    \]
    Moreover, if \(\fd\mu>2\), then
    \[
    \cL^1\bigl(D_x(\spt \mu)\bigr)>0
    \qquad\text{for \(\mu\)-almost all }x\in\Rd.
    \]

    \item Fix \(\theta\in(0,1]\). Then
    \[
    \hd D_x(\spt \mu)\ge \min\left\{\frac{\fs\mu-d\theta}{2},1\right\}
    \qquad\text{for \(\mu\)-almost all }x\in\Rd.
    \]
    In particular, if
    \[
    \fs\mu > 2+d\theta,
    \]
    then
    \[
    \cL^1\bigl(D_x(\spt \mu)\bigr)>0
    \qquad\text{for \(\mu\)-almost all }x\in\Rd.
    \]
\end{enumerate}
\end{theorem}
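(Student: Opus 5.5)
The plan is to derive Theorem \ref{thm-theta-zero} from Theorem \ref{thm-ref} by taking $\mu_1=\mu_2=\mu$ and choosing the parameters $\theta_1,\theta_2$ to make $\beta$ as large as possible. After that, we only need to compare $\beta$ with the claimed bounds. Throughout we may assume the relevant dimensions are at most $d$; otherwise we cap them at $d$, which only lowers the hypotheses, or use the remark after Theorem \ref{thm-ref} that $u>d$ is trivial.

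For part (1), take $\theta_1=\theta_2=0$. Then $u=\fd\mu$, and by the convention in the remark following Theorem \ref{thm-ref},
\[
a=\min\{d/2,\fd\mu\}.
\]
Since $\theta_1=0$, we have $u\ge d\theta_1$, so we are in the second case and $\beta(u)=ua/d$.
\begin{itemize}
\item If $\fd\mu\le d/2$, then $\beta=(\fd\mu)^2/d$. This is at least $\fd\mu/2$ only when $\fd\mu\ge d/2$, so this choice is not good enough.
\item If $\fd\mu\ge d/2$, then $\beta=\fd\mu/2$, as required.
\end{itemize}
So for part (1) I would instead use the first case, $u<d\theta_1$. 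Take $\theta_1=1$, so that $u=\hd$-type spectrum $\dim^1_{\rm F}\mu\ge\fd\mu$, and keep $\theta_2=0$. This gives
\[
\beta=\frac{u+2a-d}{2}.
\]
This is also not of the form $\fd\mu/2$.

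Matching the target $\fd\mu/2$ suggests instead $\theta_1=0$ and $\theta_2=0$ in the first-case formula with $u$ replaced by $d$. That is, $\mu_1$ should be effectively "full", which is not available. The conclusion is that part (1) is not a corollary of Theorem \ref{thm-ref}, and I would prove it directly.

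The direct proof fixes a pin $x$ and considers the pushforward $\nu_x=\Delta_x\mu$ under $\Delta_x(y)=|x-y|$. The key tool is the Fourier representation of the pinned distance measure. For $t\sim 1$,
\[
\widehat{\nu_x}(\tau)=\int \widehat\mu(\xi)\,e^{2\pi i x\cdot\xi}\,\widehat{\sigma_\tau}(\xi)\,d\xi,
\]
where $\sigma_\tau$ denotes the relevant spherical averages.

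The target quantity is the $\mu$-average of the Sobolev energy of $\nu_x$:
\[
\int\!\!\int |\widehat{\nu_x}(\tau)|^2|\tau|^{s-1}\,d\tau\,d\mu(x).
\]
Expanding the square and integrating in $x$ first produces a factor $\widehat\mu(\xi-\eta)$. The resulting expression is bounded via the $L^\infty$ decay
\[
|\widehat\mu(\zeta)|\lesssim|\zeta|^{-\fd\mu/2+\epsilon},
\]
applied to the cross factor, together with the standard decay of $\widehat{\sigma}$ on the sphere of radius $|\tau|$. The ingredients for this step are Plancherel on spheres and the estimate
\[
\int_{S^{d-1}}\int_{S^{d-1}}|r\omega-r\omega'|^{-\alpha}\,d\omega\,d\omega'\lesssim r^{-\alpha}\quad\text{for }\alpha<d-1.
\]
These should show the averaged energy is finite whenever $s<\fd\mu/2$. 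Hence $\hd\nu_x\ge s$ for $\mu$-a.e.\ $x$. When $s>1$, the same bound gives $\nu_x\in L^2$, and therefore $\mathcal L^1(D_x)>0$.

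For part (2), we use $L^{1/\theta}$ integrability in place of $L^\infty$. Apply Hölder's inequality with exponents $1/\theta$ and $1/(1-\theta)$ to the cross term. The $1/\theta$ factor is controlled by $\mathcal J_{s,\theta}(\mu)$. The remaining factor costs $|\zeta|^{d\theta}$, coming from the $|z|^{s/\theta-d}$ weight. This yields the threshold $(\fs\mu-d\theta)/2$, which recovers part (1) as $\theta\to0$.

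The main obstacle is this averaged energy estimate: correctly splitting the frequency integral near and far from the diagonal $\xi\approx\eta$ so that the decay of $\widehat\mu(\xi-\eta)$ gains exactly half of the dimension.
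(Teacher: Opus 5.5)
\textbf{There is a genuine gap in part (1).} You are right that Theorem~\ref{thm-ref} does not give part (1), and the paper also proves it separately. So everything rests on your direct argument. There, the decisive estimate is only asserted: that the $\mu$-average over pins of the $s$-energy of $\nu_x$ is finite for $s<\fd\mu/2$. Moreover, the route you outline cannot deliver it.

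With the unsquared distance, $\widehat{\nu_x}(\tau)=(K_\tau*\mu)(x)$ with $K_\tau(w)=e^{-2\pi i\tau|w|}$ (localised to $|w|\approx 1$). The transform $\widehat{K_\tau}$ has size about $|\tau|^{-(d-1)/2}$ on the unit-width shell $\{\xi:\bigl||\xi|-|\tau|\bigr|\lesssim 1\}$ and is negligible elsewhere. Suppose you bound the cross factor by $|\widehat\mu(\eta-\xi)|\lesssim\ang{\xi-\eta}^{-t/2}$ with $t<\fd\mu$. Then control the remaining $|\widehat\mu(\xi)|\,|\widehat\mu(\eta)|$ by Schur's test (your spherical Riesz integral) together with $\int_{\text{shell}}|\widehat\mu|^2\lesssim|\tau|^{d-1-t}$. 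At best you obtain
\[
\int|\widehat{\nu_x}(\tau)|^2\,d\mu(x)\lesssim|\tau|^{-(d-1)}\cdot|\tau|^{d-1-t/2}\cdot|\tau|^{d-1-t}=|\tau|^{d-1-3t/2},
\]
that is, dimension $\frac{3t}{2}-(d-1)$. This is worse than $t/2$ whenever $t<d-1$, which is exactly the interesting regime: full dimension would need $\fd\mu\ge 2d/3$ rather than $\fd\mu\ge 2$. Splitting near and far from the diagonal does not help. For measures whose transform is spread evenly over the shell, the far-from-diagonal part alone already produces this bound. The real loss is the cancellation between $\widehat\mu(\xi)$ and $\overline{\widehat\mu(\eta)}$, which disappears as soon as absolute values are taken.

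\textbf{The missing idea is to pass to squared distances.} This changes neither $\hd D_x(\spt\mu)$ nor the positivity of its Lebesgue measure. But it makes the phase linear in the pin: $|x-y|^2-|x-z|^2=|y|^2-|z|^2-2x\cdot(y-z)$. Integrating in $x$ first then gives the exact identity
\[
\int\bigl|\widehat{\delta^2_{x,\mu}}(\tau)\bigr|^2\,d\mu(x)=\iint e^{-2\pi i\tau(|y|^2-|z|^2)}\,\widehat\mu\bigl(-2\tau(y-z)\bigr)\,d\mu(y)\,d\mu(z).
\]
The $L^\infty$ decay applies to this directly, with no spherical analysis at all. For $0<\sigma<t/2$ one has $\int_\R(1+2|\tau|r)^{-t/2}|\tau|^{\sigma-1}\,d\tau=c_{\sigma,t}\,r^{-\sigma}$. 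Hence the pin-averaged energy is $\lesssim I_\sigma(\mu)$, which is finite because $\sigma<t<\fd\mu$. For $t>2$ the unweighted $\tau$-integral equals $C_t\,r^{-1}$, which reduces the $\cL^1$ claim to $I_1(\mu)<\infty$. This is the paper's Lemma~\ref{proptheta0} with $\mu_1=\mu_2=\mu$.

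\textbf{Part (2) then needs no H\"older-type rerun of the main estimate.} For compactly supported $\mu$, write $\widehat\mu=\widehat\phi*\widehat\mu$ with $\phi\in\mathcal S(\Rd)$ equal to $1$ on $\spt\mu$. Applying H\"older with exponent $2/\theta$ (not $1/\theta$) gives $|\widehat\mu(\xi)|^2\lesssim|\xi|^{-(s-d\theta)}$ whenever $\mathcal J_{s,\theta}(\mu)<\infty$. So $\fd\mu\ge\fs\mu-d\theta$, and both claims of (2) follow from (1); this is how the paper argues. Your ``$|\zeta|^{d\theta}$ cost'' is this inequality in disguise. As written, however, your part (2) inherits the unproven estimate of part (1).
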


We prove Theorem \ref{thm-theta-zero} in Section \ref{sec:proof2}. Next we state a corollary which combines the large $\theta$ and small $\theta$ estimates from Theorems \ref{thm-ref} and \ref{thm-theta-zero}.  The aim is to bound the Hausdorff dimension of the pinned distance set of a single set from below for a large set of pins.  Again we start with a general result and then specialise to  statements which are easier to digest.

\begin{corollary}\label{cormain}
Let \(d\ge 2\) and let \(\mu\) be a compactly supported probability measure on \(\Rd\).  Fix \(\theta\in[0,1]\), and write
\[
s=\dim_{\mathrm{F}}^\theta \mu.
\]
Define
\[
\beta(s):=
\begin{cases}
\dfrac{s}{2}, & \theta=0,\\[2ex]
\max\!\left\{\dfrac{s-d\theta}{2},\,
\min\left\{\dfrac{s(1+2\theta)-d\theta}{2},\dfrac{s}{2},\dfrac{s^2}{d}\right\}\right\},
& 0<\theta<\dfrac12,\\[3ex]
\min\left\{s-\dfrac{d\theta}{2},\dfrac{s}{2}\right\},
& \dfrac12\le \theta\le 1.
\end{cases}
\]
If \(\beta(s)>0\), then \[
\hd D_x(\spt\mu)\ge \min\{1,\beta(s)\}
\qquad\text{for \(\mu\)-almost all }x.
\]

Moreover, if \(\beta(s)>1\), then \(\cL^1\bigl(D_x(\spt\mu)\bigr)>0\) for \(\mu\)-almost all \(x\).
\end{corollary}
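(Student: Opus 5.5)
We will derive this from Theorem \ref{thm-ref} with $\mu_1=\mu_2=\mu$ and $\theta_1=\theta_2=\theta$, together with Theorem \ref{thm-theta-zero}. For each range of $\theta$ we only have to check that the $\beta(s)$ in the statement is at most the lower bound supplied by one of these theorems. Since $\spt\mu_1=\spt\mu_2=\spt\mu$, the pinned conclusions hold for $\mu$-almost all $x$, and the Lebesgue measure statement follows from the $\beta>1$ clauses in the same way. We may assume $s\le d$. If $s>d$, then $\mathcal{J}_{t,\theta}(\mu)<\infty$ for every $t<s$, so we may run the argument with $t$ in place of $s$, and in Theorem \ref{thm-ref} use the paper's remark that $\beta=a$ when $u>d$.

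\emph{The case $\theta=0$.} This is exactly Theorem \ref{thm-theta-zero}(1), since $\fd\mu=s$.

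\emph{The case $0<\theta<1/2$.} Theorem \ref{thm-theta-zero}(2) gives the bound $(s-d\theta)/2$. For the other term of the maximum we apply Theorem \ref{thm-ref} with $u=s$. Since $2\theta<1$ we have $s/(2\theta)\ge s$, so $a=\min\{d/2,s\}$. We show that the actual value of $\beta(u)$ from Theorem \ref{thm-ref} is at least $\min\{\frac{s(1+2\theta)-d\theta}{2},\frac s2,\frac{s^2}{d}\}$ in each regime:
\begin{itemize}
\item If $s<d\theta$, then $s<d/2$, so $a=s$ and $\beta=(s(1+2\theta)-d\theta)/2$, which is the first term of the minimum.
\item If $d\theta\le s\le d/2$, then $\beta=sa/d=s^2/d$.
\item If $s>d/2$, then $a=d/2$ and $\beta=s/2$.
\end{itemize}
In every regime the actual $\beta$ is one of the three terms, so it dominates their minimum. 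Taking the better of the two theorems gives the stated maximum.

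\emph{The case $1/2\le\theta\le1$.} Now $s/(2\theta)\le s$, so $a=\min\{d/2,\,s/(2\theta)\}$.
\begin{itemize}
\item If $s<d\theta$, then $\beta=(s+2\theta a-d\theta)/2$. Since $2\theta a=\min\{d\theta,s\}$, this equals $\min\{s/2,\,s-d\theta/2\}$ exactly.
\item If $s\ge d\theta$, then $\beta=sa/d=\min\{s/2,\,s^2/(2\theta d)\}$. The inequality
\[
\frac{s^2}{2\theta d}\ \ge\ s-\frac{d\theta}{2}
\]
is equivalent to $(s-d\theta)^2\ge0$, so again $\beta\ge\min\{s-d\theta/2,\,s/2\}$.
\end{itemize}

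The main obstacle is only bookkeeping: keeping track of which term attains the minimum defining $a$ and which branch of $\beta(u)$ is active, and treating the boundary cases $s>d$ and $s>d/2$ correctly. There are no new analytic difficulties, since everything reduces to the two theorems already established.
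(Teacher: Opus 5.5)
Your bookkeeping in the three ranges of $\theta$ is correct, and it is exactly how the corollary is meant to follow. The paper gives no separate proof; it simply combines Theorem \ref{thm-ref} (with $\mu_1=\mu_2=\mu$, $\theta_1=\theta_2=\theta$) and Theorem \ref{thm-theta-zero}.

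The gap is in your reduction of the case $s>d$, and it affects the Lebesgue measure clause when $d=2$. Replacing $s$ by some $t\le d$, or using $\beta=a$ for $u>d$, can never give a bound larger than $d/2$. Indeed $a\le d/2$, and the corollary's own $\beta(t)$ is at most $t/2\le d/2$ for $t\le d$. When $d=2$ this is at most $1$, so the condition $\beta>1$ is never reached. However, for $d=2$ and $\theta>0$ the corollary asserts $\cL^1(D_x(\spt\mu))>0$ whenever $s>2$, since then $\beta(s)>1$. Theorem \ref{thm-theta-zero}(2) only covers $s>2+2\theta$. For example, take $d=2$, $\theta=1$, $s=3$: then $\beta(s)=3/2>1$. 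Yet Theorem \ref{thm-ref}, via the remark, gives only $a=\min\{1,3,3/2\}=1$, and Theorem \ref{thm-theta-zero}(2) needs $s>4$. So for $d=2$ and $2<s\le 2+2\theta$ your argument gives full dimension but not positive Lebesgue measure. For $d\ge 3$ your route does work, since then $a=d/2>1$.

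The missing ingredient is an elementary absolute continuity argument, the pinned analogue of the paper's remark after Theorem \ref{thm-ref}. Suppose $s>d$. Then Lemma \ref{lem:Jtheta-to-Jeta}(2) gives $\J_{w,1}(\mu)<\infty$ for some $w>d$, that is, $\int|\widehat\mu(\xi)|^2|\xi|^{w-d}\,d\xi<\infty$. Since $|\xi|^{w-d}\ge 1$ for $|\xi|\ge1$ and $|\widehat\mu|\le 1$, this gives $\widehat\mu\in L^2$, so $\mu=f\,dx$ with $f\in L^2$. Now fix any $x$ and use polar coordinates about $x$. The pushforward of $\mu$ under $y\mapsto|x-y|$ has density $r\mapsto r^{d-1}\int_{S^{d-1}}f(x+r\omega)\,d\omega$, where $d\omega$ is surface measure. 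So it is an absolutely continuous probability measure carried by the compact set $D_x(\spt\mu)$. Hence $\cL^1(D_x(\spt\mu))>0$ for every $x$, not just almost every $x$. With this case added, your proof is complete.
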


\begin{remark}
In the regime \(0<\theta<\frac12\), the quantity
\[
\max\!\left\{\frac{s-d\theta}{2},\,
\min\left\{\frac{s(1+2\theta)-d\theta}{2},\frac{s}{2},\frac{s^2}{d}\right\}\right\}
\]
admits the equivalent piecewise form
\[\beta(s)=
\begin{cases}
\dfrac{s(1+2\theta)-d\theta}{2}, & 0\le s\le d\theta,\\[2ex]
\max\!\left\{\dfrac{s-d\theta}{2},\dfrac{s^2}{d}\right\},
& d\theta\le s\le \dfrac d2,\\[3ex]
\dfrac{s}{2}, & \dfrac d2\le s\le d.
\end{cases}
\]
\end{remark}

We now record certain special cases of the previous corollary corresponding to the values \(\theta=1,1/2,0\). Recall that \(\dim_{\mathrm{F}}^1\mu=\sd\mu\) and \(\dim_{\mathrm{F}}^0\mu=\fd\mu\).

\begin{corollary}\label{cor2}
Let \(d\ge 4\) and let \(\mu\) be a compactly supported probability measure on \(\Rd\).
\begin{enumerate}
    \item \(\hd D_x(\spt\mu)\ge \min\left\{\sd\mu-\frac d2,1\right\}\) for \(\mu\)-almost all \(x\).

    \item \(\hd D_x(\spt\mu)\ge \min\left\{\dim_{\mathrm{F}}^{1/2}\mu-\frac d4,1\right\}\) for \(\mu\)-almost all \(x\).

    \item \(\hd D_x(\spt\mu)\ge \min\left\{\frac{\fd\mu}{2},1\right\}\) for \(\mu\)-almost all \(x\).
\end{enumerate}
\end{corollary}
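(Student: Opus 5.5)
Corollary \ref{cor2} follows by specialising Corollary \ref{cormain} to $\theta=1$, $\theta=1/2$ and $\theta=0$. In each case we compute $\beta(s)$ and then check that the stated lower bound $\min\{\cdot,1\}$ is at most $\min\{1,\beta(s)\}$. When the stated bound is nonpositive there is nothing to prove, since Hausdorff dimension is nonnegative. When it is positive, we have $\beta(s)>0$, so Corollary \ref{cormain} applies.

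\textbf{Case $\theta=0$.} Here $s=\fd\mu$ and $\beta(s)=s/2$. Part (3) is then exactly Corollary \ref{cormain}; equivalently it is Theorem \ref{thm-theta-zero}(1). If $\fd\mu=0$ the bound is trivial.

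\textbf{Case $\theta=1/2$.} Here $s=\dim_{\mathrm F}^{1/2}\mu\le d$ and
\[
\beta(s)=\min\{s-d/4,\,s/2\}.
\]
We must show $\min\{1,\beta(s)\}\ge\min\{1,s-d/4\}$. If $s\le d/2$, then $s-d/4\le s/2$, so $\beta(s)=s-d/4$ and we are done. If $s>d/2$, then $s/2>d/4\ge 1$ because $d\ge 4$. Also $s-d/4>d/4\ge1$. Hence $\beta(s)>1$ and both sides equal $1$. This is the only place where $d\ge4$ is used. Without it, the $s/2$ cap could bind below $1$ while $s-d/4$ does not.

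\textbf{Case $\theta=1$.} Here $s=\sd\mu\le d$ and
\[
\beta(s)=\min\{s-d/2,\,s/2\}.
\]
Since $s\le d$, we have $s-d/2\le s/2$, so $\beta(s)=s-d/2$, and part (1) follows directly.

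I expect no real obstacle here. The only care needed is handling the minimum with $s/2$ in the $\theta=1/2$ case, which is exactly where the hypothesis $d\ge 4$ enters, and noting that trivial cases with $\beta(s)\le0$ give nothing to prove.
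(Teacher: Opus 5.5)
Your proposal is correct and follows the paper's route: Corollary \ref{cor2} is recorded there as the specialisation of Corollary \ref{cormain} to $\theta=1,\tfrac12,0$. Your case checks are right, including the use of $d\ge 4$ to handle the $s/2$ cap at $\theta=1/2$. One small repair: $\sd\mu\le d$ need not hold for measures (a smooth density has $\sd\mu=\infty$), but if $s=\sd\mu>d$ then $\beta(s)=s/2>d/2>1$ and $s-d/2>1$, so both sides equal $1$ and part (1) still holds.
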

It is also of particular interest to determine conditions which guarantee that the distance set has full dimension. The next corollary gives simple, checkable, and relatively mild conditions depending on the Fourier spectrum.

\begin{corollary}\label{cor3}
Let \(d\ge 4\) and let \(\mu\) be a compactly supported probability measure on \(\Rd\). For $\theta \in [0,1]$, define
\[
T_d(\theta):=
\begin{cases}
2+d\theta, & 0 \le \theta < \dfrac{\sqrt d - 2}{d},\\[1ex]
\sqrt d, & \dfrac{\sqrt d - 2}{d} \le \theta < d^{-1/2},\\[1ex]
\dfrac{2+d\theta}{1+2\theta}, & d^{-1/2}\le \theta < \dfrac12,\\[2ex]
1+\dfrac{d\theta}{2}, & \dfrac12 \le \theta\le 1.
\end{cases}
\]
If, for some \(\theta\in[0,1]\),
\(
\dim_{\mathrm{F}}^\theta\mu \ge T_d(\theta),
\)
then
\[
\hd D(\spt\mu)=1.
\]
Moreover,
\(
\hd D_x(\spt\mu)=1 
\)
 for \(\mu\)-almost all $x$.
\end{corollary}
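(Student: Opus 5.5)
The plan is to deduce the statement directly from Corollary \ref{cormain}. Fix $\theta$ with $s=\dim_{\mathrm F}^\theta\mu\ge T_d(\theta)$. I will show that $\beta(s)\ge 1$. Since $T_d(\theta)\ge 2$ for $d\ge 4$, this also gives $\beta(s)>0$. Corollary \ref{cormain} then yields $\hd D_x(\spt\mu)\ge 1$ for $\mu$-almost all $x$, and the reverse inequality is trivial because $D_x(\spt\mu)\subseteq\mathbb R$. Finally, $\mu$-almost every $x$ lies in $\spt\mu$, so $D_x(\spt\mu)\subseteq D(\spt\mu)$ for such $x$, which gives $\hd D(\spt\mu)=1$.

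It remains to check $\beta(s)\ge1$ regime by regime.

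\emph{Case $\theta=0$.} Here $\beta(s)=s/2$, and $T_d(0)=2$, so $s\ge 2$ gives $\beta(s)\ge1$.

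\emph{Case $\frac12\le\theta\le1$.} We need both $s-\frac{d\theta}{2}\ge1$ and $s/2\ge1$, that is $s\ge\max\{1+\frac{d\theta}2,2\}$. Since $d\ge4$ and $\theta\ge\frac12$, we have $1+\frac{d\theta}2\ge2$, so this maximum is exactly $T_d(\theta)$.

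\emph{Case $0<\theta<\frac12$.} This is the only step requiring care. The first term in the max defining $\beta(s)$ is at least $1$ iff $s\ge 2+d\theta$. The second term is at least $1$ iff
\[
s\ge M(\theta):=\max\left\{\frac{2+d\theta}{1+2\theta},\,2,\,\sqrt d\right\}.
\]
Hence $\beta(s)\ge1$ whenever $s\ge\min\{2+d\theta,\,M(\theta)\}$, and I will verify that this minimum equals $T_d(\theta)$.

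First, $\sqrt d\ge2$ because $d\ge4$, so the entry $2$ in $M(\theta)$ is redundant. Next, compare the remaining two entries:
\[
\frac{2+d\theta}{1+2\theta}\ge\sqrt d
\iff \theta(d-2\sqrt d)\ge\sqrt d-2
\iff \theta\sqrt d(\sqrt d-2)\ge \sqrt d-2 .
\]
For $d>4$ this is equivalent to $\theta\ge d^{-1/2}$. For $d=4$ both sides vanish, so both expressions equal $2$ and nothing changes. Therefore $M(\theta)=\sqrt d$ for $\theta<d^{-1/2}$, and $M(\theta)=\frac{2+d\theta}{1+2\theta}$ otherwise.

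Now take the minimum with $2+d\theta$:
\begin{itemize}
\item For $\theta\ge d^{-1/2}$, we have $\frac{2+d\theta}{1+2\theta}\le 2+d\theta$ trivially, so the minimum is $\frac{2+d\theta}{1+2\theta}$.
\item For $\theta<d^{-1/2}$, the minimum is $2+d\theta$ exactly when $2+d\theta\le\sqrt d$, i.e.\ $\theta\le(\sqrt d-2)/d$, and is $\sqrt d$ otherwise.
\end{itemize}
This reproduces the three middle branches of $T_d$. One also checks $(\sqrt d-2)/d\le d^{-1/2}$, so the intervals are consistently ordered. At the boundary point $\theta=(\sqrt d-2)/d$ both candidate values coincide, so the choice of closed or open endpoints is immaterial.

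The only mild obstacle is this case bookkeeping in the regime $0<\theta<\frac12$; the remaining steps are immediate from Corollary \ref{cormain}.
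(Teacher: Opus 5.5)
Your proposal is correct: Corollary~\ref{cor3} follows by reading off, regime by regime, when the exponent $\beta(s)$ of Corollary~\ref{cormain} is at least $1$, and your case analysis reproduces $T_d(\theta)$ exactly, including the degenerate case $d=4$ and the flat branch $\sqrt d$ coming from the $s^2/d$ term. The paper does not write this derivation out, but it is the route the paper indicates (the thresholds are described as arising from the $\beta$ of Corollary~\ref{cormain}), so there is nothing to add.
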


 \begin{remark}
For simplicity of exposition, we chose to present the results in this section in terms of the Fourier dimension and spectrum of measures rather than sets.  However, by appealing to the definition of the Fourier spectrum of sets, one may produce   statements about the distance set $D(E)$ of a given Borel set $E$  based on the Fourier spectrum of $E$.  We leave the details to the reader.  Furthermore, our results on pinned distance sets are achieved for `almost all pins', but these statements can  be turned into statements about sets of pins of large Hausdorff dimension.  For example, if $\hd D_x(\spt \mu) = 1$ for $\mu$-almost all $x$, then $\hd D_x(\spt \mu) = 1$ for a set of pins $x \in \spt \mu$ of Hausdorff dimension at least $\hd \mu$.  Moreover,   $\hd \mu$ can in turn be bounded from below by $\fs \mu$ for any $\theta \in [0,1]$ of interest. Again, we leave the details to the reader. 
 \end{remark}

\section{Sharpness examples} \label{sec:examples}

In this section we construct examples pertaining to the sharpness (or near sharpness) of our results from the previous section.  

First we build an example of a pair of compact sets which together witness very few distances but yet both satisfy good Fourier analytic estimates.

\begin{proposition}\label{ex1}
Let $d\ge 4$, and set
\[
k_1=\left\lceil \frac d2\right\rceil,
\qquad
k_2=\left\lfloor \frac d2\right\rfloor,
\qquad
k_1+k_2=d.
\]
Then, there exist compact sets $E,F\subset\R^d$ such that:
\begin{enumerate}[label=\textup{(\roman*)},itemsep=2pt]
\item
Only a single distance is realised between $E$ and $F$, that is,
\[
D(E,F)=\{\sqrt 2\},
\qquad\text{and so}\qquad
\hd D(E,F)=0;
\]
\item The Fourier spectra satisfy
\[
\fs E   \geq \min\{\theta k_1,  k_1-1\}  
\]
and
\[
\fs F   \geq \min\{\theta k_2,  k_2-1\}.
\]
\end{enumerate}
In particular, when $\theta=1/2$, $\dim_\mathrm{F}^{1/2} E + \dim_\mathrm{F}^{1/2} F = d/2$ and so the bound from Corollary \ref{cor1/2} is sharp.
\end{proposition}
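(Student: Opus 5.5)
Split $\R^d=\R^{k_1}\times\R^{k_2}$ and take
\[
E=S^{k_1-1}\times\{0\}\subset\R^{k_1}\times\R^{k_2},\qquad F=\{0\}\times S^{k_2-1},
\]
where $S^{m-1}$ is the unit sphere in $\R^m$. For $x=(x',0)\in E$ and $y=(0,y'')\in F$, orthogonality gives $|x-y|^2=|x'|^2+|y''|^2=2$. Hence $D(E,F)=\{\sqrt2\}$, which proves (i). Both spheres are nondegenerate because $d\ge4$ forces $k_1,k_2\ge2$.

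For (ii), let $\mu=\sigma_{k_1}\times\delta_0$, where $\sigma_{k_1}$ is surface measure on $S^{k_1-1}$. Writing $z=(\xi,\eta)\in\R^{k_1}\times\R^{k_2}$, we have $\widehat\mu(\xi,\eta)=\widehat{\sigma}_{k_1}(\xi)$, which is constant in $\eta$. The classical stationary phase bound gives $|\widehat\sigma_{k_1}(\xi)|\lesssim(1+|\xi|)^{-(k_1-1)/2}$, so $|\widehat\sigma_{k_1}|^2\lesssim|\xi|^{-(k_1-1)}$. This bound is sharp along a sequence of radii, since $\widehat\sigma$ is a Bessel function with non-vanishing oscillation amplitude.

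To estimate $\J_{s,\theta}(\mu)$ for $\theta\in(0,1]$, I would integrate $|\widehat\sigma(\xi)|^{2/\theta}\,|(\xi,\eta)|^{s/\theta-d}$ over $\eta\in\R^{k_2}$ first. For $s/\theta<k_1$, this inner integral is $\approx|\xi|^{s/\theta-k_1}$. What remains is
\[
\int_{\R^{k_1}}|\widehat\sigma(\xi)|^{2/\theta}\,|\xi|^{s/\theta-k_1}\,d\xi,
\]
and bounding $|\widehat\sigma|^{2/\theta}\lesssim|\xi|^{-(k_1-1)/\theta}$, this converges at infinity iff $s<k_1-1$. So $\J_{s,\theta}(\mu)<\infty$ whenever $s<\min\{\theta k_1,k_1-1\}$. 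Near the origin there is no issue, because $\widehat\mu$ is bounded and $s/\theta-d>-d$ is locally integrable; for the same reason the $\eta$-integral converges near zero.

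At $\theta=0$ the claimed bound is $0$, which is trivial. Taking suprema over $s$ gives $\fs E\ge\min\{\theta k_1,k_1-1\}$, and symmetrically $\fs F\ge\min\{\theta k_2,k_2-1\}$.

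For the final statement at $\theta=1/2$: because $k_i\ge2$, we have $\theta k_i=k_i/2\le k_i-1$, so the lower bounds read $\dim^{1/2}_\mathrm{F}E\ge k_1/2$ and $\dim^{1/2}_\mathrm{F}F\ge k_2/2$. The matching upper bounds hold because $\mu$ is constant in the $\eta$-directions. Restricting to a cone $|\eta|\ge|\xi|$ with $\xi$ in a fixed annulus, where $|\widehat\sigma|$ is bounded below on a set of positive measure, the integrand in $\J_{s,1/2}(\mu)$ behaves like $|\eta|^{2s-d}$ over $\R^{k_2}$. This diverges once $2s-d\ge-k_2$, i.e. $s\ge k_1/2$. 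Hence $\dim^{1/2}_\mathrm{F}E=k_1/2$, likewise $\dim^{1/2}_\mathrm{F}F=k_2/2$, and the sum is $d/2$. Using the measures $\mu_1=\sigma_{k_1}\times\delta_0$ and $\mu_2=\delta_0\times\sigma_{k_2}$, each $\dim^{1/2}_\mathrm{F}\mu_i=k_i/2\le d/2$, so Corollary \ref{cor1/2} applies and gives the trivial bound $0$, which is attained.

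The main obstacle is making the spectrum computation rigorous: justifying the iterated integration near $\xi=0$ and the case split at $s/\theta$ versus $k_1$. The sharpness of $\dim^{1/2}_\mathrm{F}$ for the specific measures, which the final equality needs, also requires the lower bound on $|\widehat\sigma|$.
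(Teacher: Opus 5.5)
Your construction and part (i) coincide with the paper's. The difference is in (ii). The paper computes nothing: it cites a general lower bound for the Fourier spectrum of product measures, \cite[Theorem 2.1]{JMF2}, together with the fact that spherical surface measure is Salem. You instead use that $\widehat\mu(\xi,\eta)=\widehat\sigma(\xi)$ does not depend on $\eta$ and integrate out $\eta$ explicitly. This is exactly where $s<\theta k_1$ enters, as the convergence condition $s/\theta<k_1$ of the inner integral. You then use only the stationary phase decay $|\widehat\sigma(\xi)|\lesssim|\xi|^{-(k_1-1)/2}$ to get $s<k_1-1$ from the outer integral. This is correct and self-contained, and it shows where each term of $\min\{\theta k_1,k_1-1\}$ comes from. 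The paper's citation is shorter and applies to arbitrary products, which the paper reuses twice in Proposition \ref{ex2}. Your direct approach also gives a matching upper bound at $\theta=1/2$. That bound is needed for the equality in the \emph{in particular} clause, which the paper's proof does not address.

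There is one small slip in that upper bound. Your cone argument proves $\dim_{\mathrm F}^{1/2}\mu\le k_1/2$ only for the particular measure $\mu=\sigma\otimes\delta_0$. But $\dim_{\mathrm F}^{1/2}E$ is a supremum over all measures on $E$, so the set-level equality does not follow as written. The fix is short. Any finite nonzero measure on $E$ has the form $\lambda\otimes\delta_0$, with $\widehat\lambda$ continuous and $\widehat\lambda(0)>0$. So $|\widehat\lambda|$ is bounded below on a small ball about $0$. The divergence of the $\eta$-integral then gives $\J_{s,\theta}(\lambda\otimes\delta_0)=\infty$ whenever $s\ge\theta k_1$. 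Combined with $\fs E\le\hd E=k_1-1$, this yields $\fs E=\min\{\theta k_1,k_1-1\}$ exactly for $\theta\in(0,1]$. For the sharpness of Corollary \ref{cor1/2}, which is phrased in terms of measures, your measure-level computation already suffices.
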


\begin{proof}
Write $\R^d=\R^{k_1}\times\R^{k_2}$, and let
\[
E=S^{k_1-1}\times\{0\},
\qquad
F=\{0\}\times S^{k_2-1}.
\]
Let $\sigma_{k_1-1}$ and $\sigma_{k_2-1}$ denote the normalized surface measures on the corresponding spheres, and define
\[
\mu=\sigma_{k_1-1}\otimes\delta_0,
\qquad
\nu=\delta_0\otimes\sigma_{k_2-1}.
\]
Then $\mu$ and $\nu$ are probability measures supported on $E$ and $F$, respectively.

If $x=(x_1,0)\in E$ and $y=(0,y_2)\in F$, then
\[
|x-y|^2=|x_1|^2+|y_2|^2=1+1=2.
\]
Therefore, $D(E,F)=\{\sqrt 2\}$ and so $\hd D(E,F)=0$.

Since $\mu$ and $\nu$ are product measures, the desired estimates for the Fourier spectra follow from \cite[Theorem 2.1]{JMF2}  where we use  the fact that the surface measure on the sphere is Salem (and so has Fourier spectrum constantly equal to the Hausdorff dimension). This completes the proof.
\end{proof}

In the next example, we  allow larger Fourier spectra, while keeping the distance set with dimension strictly smaller than 1. 

\begin{proposition}\label{ex2}
    Let $k_1,k_2\ge 2$,  set
\[
d=k_1+k_2+1
\]
and fix $\alpha \in (0,\frac12)$. Then there exist compact sets $E,F\subset \R^d$  such that:
\begin{enumerate}[label=\textup{(\roman*)},itemsep=2pt]
\item The sets $E$ and $F$ realise a small set of distances, that is, 
\[
\hd D(E,F)\le 2\alpha<1;
\]
\item The Fourier spectra satisfy: 
\[
\fs E \geq \min\{(k_1+1)\theta, k_1 \theta+\alpha, k_1-1+\alpha\}
\]
and
\[
\fs F \geq \min\{(k_2+1)\theta, k_2 \theta+\alpha, k_2-1+\alpha\}.
\]
\end{enumerate}
\end{proposition}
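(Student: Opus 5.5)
Modify the construction of Proposition \ref{ex1}, using the extra coordinate to carry a small Salem-type set. Write $\R^d=\R^{k_1}\times\R^{k_2}\times\R$ and fix a compact set $C\subseteq[1,2]$ with $\hd C=\fd C=\alpha$. Such a Salem set in $\R$ exists (Kahane, Salem), and it supports a probability measure $\lambda$ with $\fd\lambda$ as close to $\alpha$ as we like and $\hd\lambda\le\alpha$. Define
\[
E=S^{k_1-1}\times\{0\}\times C,\qquad F=\{0\}\times S^{k_2-1}\times C .
\]
For $x=(x_1,0,t)\in E$ and $y=(0,y_2,t')\in F$ we have
\[
|x-y|^2=|x_1|^2+|y_2|^2+(t-t')^2=2+(t-t')^2 .
\]
Hence $D(E,F)=g(C-C)$ with $g(r)=\sqrt{2+r^2}$, which is smooth and locally Lipschitz. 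Therefore $\hd D(E,F)\le\hd(C-C)$. Since $C-C$ is the image of $C\times C$ under the Lipschitz map $(t,t')\mapsto t-t'$,
\[
\hd(C-C)\le\hd(C\times C)\le \hd C+\ubd C .
\]
To make this $2\alpha$, choose $C$ with $\ubd C=\alpha$ as well. Random Cantor-type Salem constructions can be arranged to have equal Hausdorff and box dimensions, so I take such a $C$. This gives (i).

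For (ii), put measures on the two sets:
\[
\mu=\sigma_{k_1-1}\otimes\delta_0\otimes\lambda,\qquad \nu=\delta_0\otimes\sigma_{k_2-1}\otimes\lambda .
\]
Estimate their Fourier spectra with the product formula \cite[Theorem 2.1]{JMF2}, exactly as in Proposition \ref{ex1}. The sphere factor $\sigma_{k_1-1}$ is Salem of dimension $k_1-1$. The $\delta_0$ factor in $\R^{k_2}$ has spectrum $0$. The factor $\lambda$ is Salem of dimension $\alpha$ in $\R$. I expect the product bound to give a lower bound of the form
\[
\min\{\theta(k_1+1),\ \theta k_1+\alpha,\ k_1-1+\alpha\}
\]
for the spectrum of $\mu$ in $\R^{k_1+1}$ (after regrouping coordinates). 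The three terms reflect the three regimes:
\begin{itemize}
\item the spectrum is capped by $\theta$ times the ambient dimension $k_1+1$;
\item the $L^{2/\theta}$ integrability interacts with the $\alpha$-decay of $\lambda$, giving $\theta k_1+\alpha$;
\item for large $\theta$ the bound is the sum of Hausdorff dimensions, $k_1-1+\alpha$.
\end{itemize}
Embedding into $\R^d$ by adding the $\delta_0$ factor does not reduce the spectrum below this, at least in the form used in Proposition \ref{ex1}. The same argument applies to $F$ with $k_2$ in place of $k_1$.

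The main obstacle is the product step. I need to apply \cite[Theorem 2.1]{JMF2} carefully to a three-factor product with one Dirac factor, and check that the resulting minimum matches the stated expression. In particular, the middle term $\theta k_1+\alpha$ needs the sphere factor's spectrum $\min\{\theta k_1,k_1-1\}$ to interact correctly with the $\alpha$-decay of $\lambda$. If the cited theorem only handles two factors, I will apply it twice: first to $\sigma_{k_1-1}\otimes\lambda$ in $\R^{k_1+1}$, then append $\delta_0$ as in Proposition \ref{ex1}.

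A secondary point is the existence of a Salem set $C$ with equal Hausdorff and upper box dimension. If this cannot be cited, I will instead bound $\hd(C-C)$ directly by covering: take $C$ to be a random Cantor set with uniform level structure, so that the natural coverings give the $2\alpha$ estimate.
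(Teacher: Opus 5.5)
Your proposal is correct and is essentially the paper's argument: $E$ and $F$ are products of a sphere, a compact $\alpha$-dimensional Salem set in $\R$ with matching upper box dimension, and $\{0\}$, so that (i) reduces to $\hd(C-C)\le\hd C+\ubd C=2\alpha$ and (ii) follows from applying \cite[Theorem 2.1]{JMF2} twice. The differences are cosmetic. The paper uses the separated translates $1+A$ and $5+A$ so that $t\mapsto\sqrt{2+t^2}$ is bi-Lipschitz on $A_2-A_1$; this is more than needed, since your Lipschitz upper bound suffices. The paper also gets a set with equal Hausdorff, box and Fourier dimension from Chen's Ahlfors-regular construction \cite[Theorem~2]{Chen}, which you can cite in place of your unspecified random Cantor set; the Brownian-image construction from the proof of Proposition~\ref{prop:easy1/2}, run with one-dimensional Brownian motion, would also do.
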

\begin{proof}
    By \cite[Theorem~2]{Chen}, for the chosen $\alpha\in(0,1)$ there exists a compact set $A\subset[0,1]$ and a probability measure $\lambda$ supported on $A$ such that
\[
\lambda([x-r,x+r])\approx r^\alpha
\qquad (x\in A,\ 0<r<1),
\]
and
\[
|\what\lambda(\tau)|\lesssim |\tau|^{-\alpha/2}(\log |\tau|)^{1/2}
\qquad (|\tau|\ge 2).
\]
The lower regularity bound shows that $\supp\lambda=A$, and the two-sided interval estimate implies that $A$ is $\alpha$-Ahlfors regular. The Fourier decay bound implies that $\fd \lambda \geq \alpha$ and since it is supported on a set with Hausdorff dimension $\alpha$ we get that $\fs \lambda = \alpha$ for all $\theta \in [0,1]$. Now set
\[
A_1=1+A\subset[1,2],
\qquad
A_2=5+A\subset[5,6],
\]
and let $\lambda_1,\lambda_2$ be the translates of $\lambda$ to $A_1$ and $A_2$, and observe that translation preserves Fourier spectrum. Let $\sigma_{k_1-1}$ be the normalised surface measure on $S^{k_1-1}\subset\R^{k_1}$, and define
\[
\mu=\sigma_{k_1-1}\otimes\lambda_1\otimes\delta_0,
\qquad
\nu=\delta_0\otimes\lambda_2\otimes\sigma_{k_2-1}
\]
on $\R^{k_1}\times\R\times\R^{k_2} = \R^{k_1+k_2+1}=\R^d$. Thus
\[
E=S^{k_1-1}\times A_1\times\{0\},
\qquad
F=\{0\}\times A_2\times S^{k_2-1}.
\]

Take $x=(u,t_1,0)\in E$ and $y=(0,t_2,v)\in F$. Then
\[
|x-y|^2=|u|^2+|t_1-t_2|^2+|v|^2 = 2+|t_1-t_2|^2.
\]
Therefore
\[
D(E,F)=\left\{ \sqrt{2+|t_1-t_2|^2} : t_1\in A_1,\ t_2\in A_2\right\}.
\]
Since $A_2-A_1\subset[3,5]$, the map
\[
f(t)=\sqrt{2+t^2}
\]
is bi-Lipschitz on $[3,5]$. Hence
\[
\hd D(E,F)=\hd(A_2-A_1).
\]
Since $A_1$ and $A_2$ are $\alpha$-Ahlfors regular, we have $\hd(A_1-A_2)\le \hd (A_1 \times A_2) \leq 2\alpha$.

Since $\mu$ and $\nu$ are product measures, the desired lower bounds for the Fourier spectra of $\mu$ and $\nu$, which then pass to $E$ and $F$, once again follow from \cite[Theorem 2.1]{JMF2}, although this time we need to apply the estimates twice. This completes the proof.
\end{proof}

The following examples are immediate from the sharpness examples for the Hausdorff dimension version of the problem.  It is useful to record them and to keep them in mind here.  In particular, for $\theta=1/2$ we can never go lower than the threshold $d/4$ and so our threshold $d/4+1$ from Corollary \ref{cor3} is near optimal.  
\begin{proposition}\label{prop:64}
Let $d\ge 2$ and let $0<s<d/2$. Then, there exists $E \subseteq \rd$ such that $\hd D(E)<1$ and $\fs E \geq s \theta$ for all $\theta \in [0,1]$.
\end{proposition}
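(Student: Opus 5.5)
We take $E$ to be one of Falconer's lattice-type examples witnessing sharpness of the $d/2$ threshold, and the only work is to bound its Fourier spectrum from below. Since $0<s<d/2$, Falconer's construction \cite{distance} gives a compact set $E\subseteq\rd$ with $\hd E > s$ and yet $\hd D(E)<1$. Concretely, $E$ is built from neighbourhoods of scaled integer lattices at a rapidly increasing sequence of scales, and the arithmetic resonance of the lattice forces few distances.

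For the Fourier spectrum we do not use any decay of $\widehat\mu$ at all. We use only the general lower bound coming from concavity of the spectrum. For a measure $\mu$ the map $\theta\mapsto \fs\mu$ is concave on $(0,1]$ by \cite[Theorem~1.1]{JMF}. It is nonnegative and continuous at $0$ for compactly supported $\mu$ by \cite[Theorem~1.3]{JMF}, so its value at $\theta=0$ is $\fd\mu\ge 0$. Its value at $\theta=1$ is $\sd\mu$. Concavity then gives
\[
\fs\mu \;\geq\; (1-\theta)\fd\mu + \theta\,\sd\mu \;\geq\; \theta\,\sd\mu
\]
for all $\theta\in[0,1]$.

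Choose $s<t<\hd E$. By the potential-theoretic definition of Hausdorff dimension, there is $\mu\in\M(E)$ with $I_t(\mu)<\infty$. By \eqref{energyequiv}, this means $\sd\mu\ge t>s$. Since $\theta t\le d$, we get
\[
\fs E\ge \min\{\fs\mu,d\}\ge \theta t\ge \theta s
\]
for every $\theta\in[0,1]$, including $\theta=0$, where the bound is trivial.

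The only point needing care is the endpoint $\theta=0$ in the concavity argument, which requires compact support; we guarantee this by taking $E$ compact. Alternatively, one can apply Hölder's inequality directly to $\J_{s,\theta}$ to interpolate between $L^\infty$ and $L^2$ and avoid citing continuity at $0$. Otherwise the argument is immediate once Falconer's example is in hand.
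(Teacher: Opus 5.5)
Your proposal is correct and follows the paper's proof: take Falconer's lattice example and use concavity of $\theta\mapsto\dim^\theta_{\mathrm F}\mu$ together with $\dim_{\mathrm F}\mu\ge 0$ to get $\dim^\theta_{\mathrm F}\mu\ge\theta\,\sd\mu$, then take a measure on $E$ whose Sobolev dimension is close to $\hd E$. Running Falconer's construction with a parameter slightly above $s$ so that $\hd E>s$, and your remark that $|\widehat\mu|\le\mu(\mathbb{R}^d)$ already gives $\mathcal{J}_{\theta t,\theta}(\mu)<\infty$ without needing compact support, are harmless refinements of the same argument.
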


\begin{proof}
It follows from \cite{distance} (see also \cite[Example 4.8]{mattila}) that there exists a compact set $E\subset \R^d$ such that
\[
\hd E=s
\qquad\text{and}\qquad
\hd D(E)< 1.
\]
But then concavity of the Fourier spectrum (for measures) implies $\fs E \geq s \theta$ for all $\theta \in [0,1]$.
\end{proof}

Proposition \ref{prop:64} gives some simple but useful information regarding the optimal threshold for the Fourier spectrum to ensure a distance set with full Hausdorff dimension.  Recall the threshold we obtain in Corollary \ref{cor3}, and see the following section for a more detailed discussion.  We conclude this section with another simple observation which improves on the lower bound from Proposition \ref{prop:64} for small $\theta$.

\begin{proposition} \label{prop:easy1/2}
    Let $d\geq 2$ and let $0<s<1/2$.  Then there exists $E \subseteq \rd$ such that $\hd D(E)<1$ and $\fs E \geq \fd E \geq s$ for all $\theta \in [0,1]$.
\end{proposition}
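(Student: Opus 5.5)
Take $E = A \times \{0\}^{d-1}$, where $A \subseteq [0,1]$ is a compact Salem set of dimension $s$ with additional arithmetic structure. For instance, $A$ could be a compact subset of a suitable set of badly/well approximable numbers, or a random Cantor set. The key requirement is that $D(E)$ is essentially a difference set of a subset of the line. Indeed, $D(E) = \{|a-b| : a,b \in A\} = (A-A) \cap [0,\infty)$. Hence $\hd D(E) = \hd(A-A) \le \hd(A\times A) \le 2s < 1$, because $s<1/2$. The only delicate part is therefore the Fourier side: we need a measure on $E \subseteq \rd$ with $\fd \ge s$.

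To get this, I would take a measure $\lambda$ on $A$ with $|\what\lambda(\tau)| \lesssim |\tau|^{-s'/2}$ for every $s'<s$; such a $\lambda$ exists by Salem's theorem or by \cite[Theorem~2]{Chen} as used in Proposition \ref{ex2}. Then set $\mu = \lambda \otimes \delta_0$ on $\R \times \R^{d-1}$, so that $\what\mu(z) = \what\lambda(z_1)$.

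This is the main obstacle: $\what\mu$ does not decay in the directions orthogonal to the first axis. A product with a point mass in $\rd$ has Fourier dimension $0$, so a genuinely flat embedding fails. To fix this I would curve the set. Let $\gamma:[0,1]\to\rd$ be the moment curve $\gamma(t) = (t,t^2,\dots,t^d)$ (or any nondegenerate curve), and take $E=\gamma(A)$ with $\mu=\gamma_*\lambda$.

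By the van der Corput / stationary phase estimates for measures on nondegenerate curves, $\gamma_*(\text{Lebesgue})$ has Fourier decay $|z|^{-1/d}$. Decay for $\gamma_*\lambda$ then follows from a standard argument combining the decay of $\what\lambda$ with the curvature. Because $\hd E = s < 1/2$, it may suffice to reach $\fd \gamma_*\lambda \ge s$ for $s$ small relative to $1/d$. If the curve argument only yields a weaker exponent, I would instead take a random Cantor set construction directly on $\gamma$, in the style of Salem/Kahane, to obtain the Salem property of $\gamma(A)$.

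The distance bound needs adjusting for the curved set. Since $\gamma$ is smooth, $D(E)$ is the image of $A\times A$ under the Lipschitz map $(a,b)\mapsto |\gamma(a)-\gamma(b)|$. Hence $\hd D(E)\le 2s<1$ still holds. Finally, $\fs E \ge \fd E \ge s$ for all $\theta$ follows from monotonicity of the spectrum.
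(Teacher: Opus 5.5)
There is a genuine gap, and it is in the step you yourself flagged as the main obstacle. Placing $E=\gamma(A)$ on the moment curve, or on any $C^d$ curve, cannot give $\fd E\ge s$ for all $s<1/2$, whatever $A$, $\lambda$ or random construction along $\gamma$ you use. The reason is that every finite measure $\mu$ on a $C^d$ curve $\Gamma=\gamma([0,1])\subseteq\rd$ satisfies $\fd\mu\le 2/d$.

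To see this, suppose $|\what\mu(z)|\lesssim|z|^{-t/2}$ with $0<t<2$. The projections $(\pi_e)_{\#}\mu$, with $\pi_e(x)=x\cdot e$, satisfy $\what{(\pi_e)_{\#}\mu}(\tau)=\what\mu(\tau e)$. Testing against a bump with compactly supported Fourier transform then gives $\mu(S)\lesssim\rho^{t/2}$ uniformly over slabs $S$ of width $\rho$. By Taylor's theorem, the arc $\gamma([t_0,t_0+\delta])$ lies within $O(\delta^d)$ of an affine hyperplane through $\gamma(t_0)$ containing the directions $\gamma'(t_0),\dots,\gamma^{(d-1)}(t_0)$. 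So every such arc has $\mu$-mass $\lesssim\delta^{dt/2}$, and covering $\Gamma$ by $\approx\delta^{-1}$ arcs forces $dt/2\le 1$.

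Consequently, for $d\ge 5$ and $2/d<s<1/2$ your construction is impossible. Your remark that it ``may suffice \dots\ for $s$ small relative to $1/d$'' concedes exactly the missing range. The fallback of a Salem/Kahane random construction on $\gamma$ hits the same barrier, since the barrier applies to every measure on $\Gamma$.

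Even when $s\le 2/d$, the ``standard argument'' transferring decay from $\what\lambda$ to $\gamma_{\#}\lambda$ does not exist. By results of Kaufman and of Mosquera--Shmerkin, if $\nu$ is a rescaled middle-third Cantor measure in $(0,1)$, then $\lambda=(t\mapsto\sqrt t)_{\#}\nu$ has polynomial Fourier decay. Yet $\what{\gamma_{\#}\lambda}(0,\zeta,0,\dots,0)=\what\nu(\zeta)$ does not tend to $0$, so $\fd\gamma_{\#}\lambda=0$.

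What is missing is a set that is spread out in all directions at all scales, which no smooth curve can provide. The paper uses a \emph{rough} curve instead. Take $K\subseteq[0,1]$ self-similar with $\hd K=\ubd K=s/2$ and let $E=B(K)$ for a $d$-dimensional Brownian motion $B$. By Kahane's theorem, $E$ is almost surely a Salem set of dimension $s$. Since $B$ is almost surely $\alpha$-H\"older for every $\alpha<1/2$, we get $\ubd E\le s$. Hence $D(E)$, a Lipschitz image of $E\times E$, satisfies $\hd D(E)\le\hd E+\ubd E=2s<1$. In your own framework this is the same as observing that $D(E)$ is the image of $K\times K$ under the $\alpha$-H\"older map $(a,b)\mapsto|B(a)-B(b)|$.

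Separately, your step $\hd(A\times A)\le 2s$ does not follow from $\hd A=s$ alone. It needs something like $\ubd A=s$, which Chen's Ahlfors-regular set supplies. This is precisely why the paper insists on $\ubd E=s$.
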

\begin{proof}
    Let $E \subseteq \rd$ be such that $\ubd E = \hd E = \fd E = s$.  That is, we require $E$ to be a Salem set of dimension $s$ with the additional property that the upper box dimension is also given by $s$.  Such sets are well-known to exist and can be constructed, for example, by taking the image of a self-similar set in $[0,1]$ of (box and Hausdorff) dimension $s/2$ under $d$-dimensional Brownian motion $B: [0,\infty) \to \rd$. The image is almost surely a Salem set of dimension $s$ by \cite[Theorem 1, Chapter 17]{kahane} and, moreover, the box dimension cannot exceed $s$ since $d$-dimensional Brownian motion  is almost surely $\alpha$-Holder for all $0<\alpha<1/2$.  But then
    $D(E)$ is a Lipschitz image of the product set $E \times E$ under the map $(x,y) \mapsto |x-y|$ and so 
    \[
    \hd D(E) \leq \hd (E \times E) \leq \hd E + \ubd E = 2s<1,
    \]
    as required. We needed information about the box dimension because the inequality $\hd (E \times E) \leq \hd E + \hd E $ does not hold in general.
\end{proof}

\section{Conjectures}
In this section we consider  Corollary~\ref{cor3} which gives thresholds for the Fourier spectrum which ensure the distance set has full dimension.  By considering certain test points and heuristics, we formulate what we believe the sharp threshold should be.

Recall Corollary \ref{cor3} shows that the  distance set of the support of a measure $\mu$ has full Hausdorff dimension follows whenever 
\[
\fs \mu \ge T_d(\theta)
\]
for some \(\theta\in[0,1]\), where
\[
T_d(\theta):=
\begin{cases}
2+d\theta, & 0 \le \theta < \frac{\sqrt d - 2}{d},\\[1ex]
\sqrt d, & \frac{\sqrt d - 2}{d} \le \theta < d^{-1/2},\\[1ex]
\dfrac{2+d\theta}{1+2\theta}, & d^{-1/2}\le \theta\le \dfrac12,\\[2ex]
1+\dfrac{d\theta}{2}, & \dfrac12\le \theta\le 1.
\end{cases}
\]
This piecewise result arises naturally from our  method of proof. Nevertheless, we do not expect it to be optimal in all regimes. In particular, the flat branch
\[
T_d(\theta)=\sqrt d
\qquad
\left(\frac{\sqrt d - 2}{d} \le \theta < d^{-1/2}\right)
\]
appears to be an analytic byproduct of the present argument, more specifically, of the loss of \(\theta\)-dependence in the passage from \(L^2\)- to \(L^4\)-type estimates, rather than a reflection of the underlying geometry. To formulate a more realistic conjectural picture, it is natural to identify the expected values of the threshold at two anchor points, namely \(\theta=1\) and \(\theta=\frac12\). 

The first anchor point is the endpoint \(\theta=1\). By definition, $\dim_\mathrm{F}^1 \mu = \sd \mu$ is the Sobolev dimension of \(\mu\), so the condition \(\sd \mu >s\) is equivalent to the finiteness of the \(s\)-energy of \(\mu\). Thus, the endpoint \(\theta=1\) corresponds to the classical energy formulation of the distance set problem, with no additional gain coming from intermediate Fourier spectrum information. From this perspective, it is natural to conjecture that the optimal endpoint should agree with the Falconer threshold, i.e.
\[
T_d^{\mathrm{conj}}(1)=\frac d2.
\]
Our current method yields instead the threshold 
\[
T_d(1)=1+\frac d2,
\]
which is not expected to be optimal.

The second anchor point is the midpoint \(\theta=\frac12\), which is naturally tied to the \(L^4\)-type mechanism appearing in our argument. Based on the sharp restriction phenomena observed in the finite field setting in \cite{CGKPTZ25}, we believe that the correct threshold at this point should be
\[
T_d^{\mathrm{conj}}\!\left(\frac12\right)=\frac d4+\frac12,
\]
rather than the value \(\frac d4+1\) taken from Corollary~\ref{cor3}. 

If one assumes these two anchor values, then the simplest curve joining them is the affine line
\begin{equation}\label{eq:affine-threshold-model}
T_d^{\mathrm{conj}}(\theta)
=
1+\Bigl(\frac d2-1\Bigr)\theta,
\qquad 0\le \theta\le 1.
\end{equation}
This line has several interesting features. First, it recovers exactly the two anchor values:
\[
T_d^{\mathrm{conj}}\!\left(\frac12\right)=\frac d4+\frac12,
\qquad
T_d^{\mathrm{conj}}(1)=\frac d2.
\]
Second, it is strictly increasing on \([0,1]\), and therefore removes the flat  branch. In particular, at the transition point \(\theta=d^{-1/2}\), it gives
\begin{equation}\label{eq:Td-sqrtd-inverse}
T_d^{\mathrm{conj}}(d^{-1/2})
=
1+\Bigl(\frac d2-1\Bigr)\frac1{\sqrt d}
=
1+\frac{\sqrt d}{2}-\frac1{\sqrt d},
\end{equation}
which is substantially smaller than the currently proved value \(\sqrt d\). Third, at the  endpoint \(\theta=0\), we get
\[
T_d^{\mathrm{conj}}(0)=1.
\]
This suggests the possibility that, for Salem-type sets with essentially optimal Fourier decay, the true threshold for full distance set dimension may be 1 (independent of the ambient dimension).  Note that we   obtain the threshold  2 above which is already independent of the ambient dimension.  The threshold 1 is natural since the distance set is related to nonlinear projections and the Fourier dimension behaves very well with respect to linear orthogonal projections.  More precisely, the pinned distance map $y \mapsto |x-y|$ is a nonlinear projection and maps a set $E$ into the pinned distance set $D_x(E) \subseteq D(E)$ for every $x \in E$.  Moreover, for \emph{every} orthogonal projection $\pi$ of $\rd$ onto lines 
\[
\hd \pi(E) \geq \min\{\fd E,1\}.
\]
Therefore, one might expect $\fd E = 1$ to be enough to ensure that the distance set has full Hausdorff dimension.  That said, one has to be cautious with this `linearisation heuristic' because it is not true that all  pinned distance maps send sets with Fourier dimension at least 1 to sets with Hausdorff dimension 1.  For example, consider the sphere $S^{d-1} \subseteq \rd$ which has Fourier dimension $d-1$. However, the pinned distance map $y \mapsto |y|$ (that is, the pin is the point $x=0$) sends the sphere to a single point.

Given the above discussion, we are  led to the following conjecture.

\begin{conjecture}\label{conj:threshold-curve}
The optimal threshold ensuring the distance set dimension has full dimension is
\[
T_d^{\mathrm{conj}}(\theta)=1+\Bigl(\frac d2-1\Bigr)\theta,
\qquad 0\le \theta\le 1.
\]
That is, if $\mu$ is such that  $\fs \mu \geq T_d^{\mathrm{conj}}(\theta)$ for some $\theta \in [0,1]$, then $\hd D(\spt \mu) = 1.$
\end{conjecture}

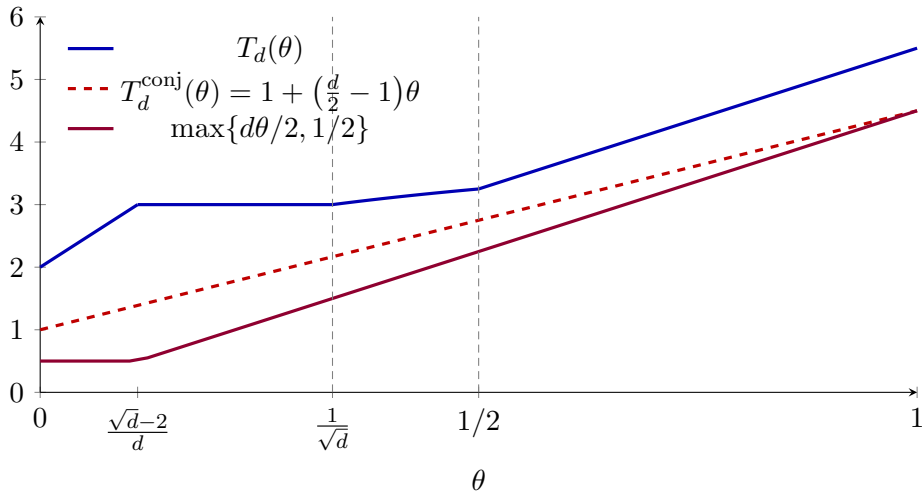
\begin{figure}[ht]
\centering

\def\dd{9} 

\begin{tikzpicture}
\begin{axis}[
    width=0.8\textwidth,
    height=0.4\textwidth,
    xmin=0, xmax=1,
    ymin=0, ymax={1+\dd/2+0.5},
    axis lines=left,
    xlabel={$\theta$},
    ylabel={},
    samples=50,
    no markers,
    clip=false,
    xtick={0,{(sqrt(\dd)-2)/\dd},{1/sqrt(\dd)},0.5,1},
    xticklabels={$0$,$\frac{\sqrt{d}-2}{d}$,$\frac{1}{\sqrt{d}}$,$1/2$,$1$},
    ytick={0,1,2,3,4,5,6},
    legend style={
        at={(0.02,0.98)},
        anchor=north west,
        draw=none,
        fill=none
    }
]


\addplot[very thick, blue!70!black, domain=0:{(sqrt(\dd)-2)/\dd}, forget plot] {2+\dd*x};
\addplot[very thick, blue!70!black, domain={(sqrt(\dd)-2)/\dd}:{1/sqrt(\dd)}, forget plot] {sqrt(\dd)};
\addplot[very thick, blue!70!black, domain={1/sqrt(\dd)}:0.5, forget plot] {(2+\dd*x)/(1+2*x)};
\addplot[very thick, blue!70!black, domain=0.5:1, forget plot] {1+\dd*x/2};

\addplot[very thick, red!75!black, dashed, domain=0:1, forget plot] {1+(\dd/2-1)*x};

\addplot[very thick, purple!75!black, domain=0:1, forget plot] {max(\dd*x/2,1/2)};

\addplot[densely dashed, gray, forget plot]
coordinates {({1/sqrt(\dd)},0) ({1/sqrt(\dd)},{1+\dd/2+0.5})};
\addplot[densely dashed, gray, forget plot]
coordinates {(0.5,0) (0.5,{1+\dd/2+0.5})};

\addlegendimage{very thick, blue!70!black}
\addlegendentry{$T_d(\theta)$}

\addlegendimage{very thick, red!75!black, dashed}
\addlegendentry{$T_d^{\mathrm{conj}}(\theta)=1+\bigl(\frac d2-1\bigr)\theta$}

\addlegendimage{very thick, purple!75!black}
\addlegendentry{$\max\{d\theta/2,1/2\}$}

\end{axis}
\end{tikzpicture}

\caption{Comparison between the currently proved threshold $T_d(\theta)$ and the conjectural affine line $T_d^{\mathrm{conj}}(\theta)$ with $d=9$. We also include the known lower bound for the threshold coming from Propositions \ref{prop:64} and \ref{prop:easy1/2}.}
\label{fig:threshold-curve-conjecture}
\end{figure}

\begin{figure}[ht]
\centering

\def\dd{40} 

\begin{tikzpicture}
\begin{axis}[
    width=0.8\textwidth,
    height=0.5\textwidth,
    xmin=0, xmax=1,
    ymin=0, ymax={1+\dd/2+0.5},
    axis lines=left,
    xlabel={$\theta$},
    ylabel={},
    samples=50,
    no markers,
    clip=false,
    xtick={0,{(sqrt(\dd)-2)/\dd},{1/sqrt(\dd)},0.5,1},
    xticklabels={
        $0$,
        $\hspace*{-0.8em}\frac{\sqrt{d}-2}{d}$,
        $\hspace*{0.9em}\frac{1}{\sqrt{d}}$,
        $1/2$,
        $1$
    },
    ytick={0,5,10,15,20},
    legend style={
        at={(0.02,0.98)},
        anchor=north west,
        draw=none,
        fill=none
    }
]


\addplot[very thick, blue!70!black, domain=0:{(sqrt(\dd)-2)/\dd}, forget plot] {2+\dd*x};
\addplot[very thick, blue!70!black, domain={(sqrt(\dd)-2)/\dd}:{1/sqrt(\dd)}, forget plot] {sqrt(\dd)};
\addplot[very thick, blue!70!black, domain={1/sqrt(\dd)}:0.5, forget plot] {(2+\dd*x)/(1+2*x)};
\addplot[very thick, blue!70!black, domain=0.5:1, forget plot] {1+\dd*x/2};

\addplot[very thick, red!75!black, dashed, domain=0:1, forget plot] {1+(\dd/2-1)*x};

\addplot[very thick, purple!75!black, domain=0:1, forget plot] {max(\dd*x/2,1/2)};

\addplot[densely dashed, gray, forget plot]
coordinates {({1/sqrt(\dd)},0) ({1/sqrt(\dd)},{1+\dd/2+0.5})};
\addplot[densely dashed, gray, forget plot]
coordinates {(0.5,0) (0.5,{1+\dd/2+0.5})};

\addlegendimage{very thick, blue!70!black}
\addlegendentry{$T_d(\theta)$}

\addlegendimage{very thick, red!75!black, dashed}
\addlegendentry{$T_d^{\mathrm{conj}}(\theta)=1+\bigl(\frac d2-1\bigr)\theta$}

\addlegendimage{very thick, purple!75!black}
\addlegendentry{$\max\{d\theta/2,1/2\}$}

\end{axis}
\end{tikzpicture}

\caption{Comparison between the currently proved threshold $T_d(\theta)$ and the conjectural affine line $T_d^{\mathrm{conj}}(\theta)$ with $d=40$. We also include the known lower bound for the threshold coming from Propositions \ref{prop:64} and \ref{prop:easy1/2}.}
\label{fig:threshold-curve-conjecture1}
\end{figure}
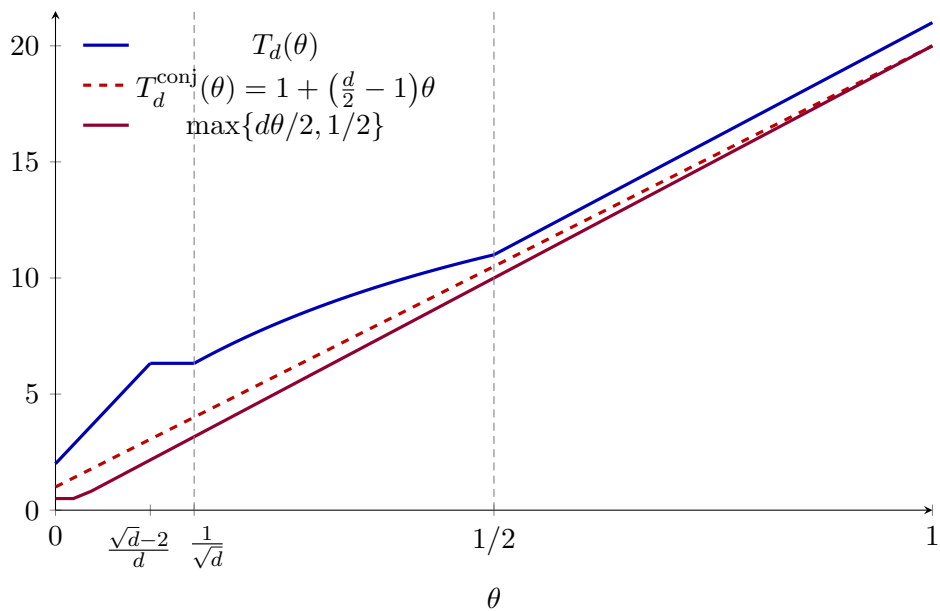

\section{Proof of Theorem \ref{thm-ref}} \label{sec:proof}

In this section we prove our main result, Theorem \ref{thm-ref}.  Without loss of generality, we assume that $\theta_1, \theta_2 \in (0,1]$.  We can do this because the desired bounds are continuous in $\theta_1, \theta_2$. Moreover, we assume without loss of generality that  $0<u \leq d$.  Indeed, if $u=0$ then the bounds are trivial and there is nothing to prove.    We start by introducing some notation which will be used throughout. 
\\
\paragraph{\bf Notation}

Although our main results pertain to standard distance sets, it will be convenient to work with squared distances.
For Borel sets $E,F\subseteq\Rd$, write
\[
D(E,F)=\{|x-y|:x\in E,\ y\in F\}
\qquad\text{and}\qquad
D^2(E,F)=\{|x-y|^2:x\in E,\ y\in F\}.
\]
In particular, $D(E)=D(E,E)$ and $D^2(E)=D^2(E,E)$. It is easy to see that
\[
\hd D^2(E,F)=\hd D(E,F).
\]
We write $X\lesssim_{\gamma}Y$ to mean $X\le C_{\gamma}Y$ for a constant $C_{\gamma}$
depending only on the indicated parameters $\gamma$ (and $d$ if not listed), and write $X\approx_{\gamma} Y$
when both $X\lesssim_{\gamma}Y$ and $Y\lesssim_{\gamma}X$ hold.  We set $\ang{x}=(1+|x|^2)^{1/2}$.

Fix compactly supported probability measures $\mu_1,\mu_2$ on $\Rd$.  Choose $R\ge 1$ such that
\[
\supp\mu_1\cup\supp\mu_2\subset B(0,R).
\]
For $x\in\Rd$, define the \emph{pinned squared-distance measure} by
\[
\delta_{x,\mu_2}^2(B):=\mu_2\bigl(\{y\in\Rd:|x-y|^2\in B\}\bigr),
\]
for Borel sets $B\subset\R$. Thus, $\delta_{x,\mu_2}^2$ is the pushforward of $\mu_2$ under the map
$y\mapsto |x-y|^2$, and
\[
\supp\delta_{x,\mu_2}^2=D_x(\supp\mu_2)^2 = D^2(\{x\}, \spt \mu_2).
\]
Define the map
\begin{equation}
\label{eq:lift-map}
F:\Rd\to\Rdpo,\qquad \text{by} \qquad  F(y)=(y,|y|^2),
\end{equation}
and the lifted measure
\begin{equation}
\label{eq:nu-def}
\nu\coloneqq F_{\#}\mu_2\quad\text{on }\Rdpo.
\end{equation}
The following lemma expresses the Fourier transform of $\delta_{x,\mu_2}^2$
in terms of $\widehat{\nu}$ evaluated along the line through the origin in
the direction $(-2x,1)$, and then derives Hausdorff dimension and
positive Lebesgue measure criteria for the pinned distance set
$D_x(\supp\mu_2)$.
\begin{lemma}\label{lem:pinned-distance-measure-fourier}
Let $x\in\Rd$. Then, for every $\tau\in\R$,
\begin{equation}\label{eq:pinned-fourier}
\widehat{\delta_{x,\mu_2}^2}(\tau)
=
e^{-2\pi i\tau|x|^2}\,\widehat\nu\bigl(\tau(-2x,1)\bigr),
\end{equation}
and, in particular,
\begin{equation}\label{eq:pinned-fourier-abs}
\bigl|\widehat{\delta_{x,\mu_2}^2}(\tau)\bigr|^2
=
\bigl|\widehat\nu\bigl(\tau(-2x,1)\bigr)\bigr|^2.
\end{equation}
Consequently, if
\[
\int_{\R}\bigl|\widehat{\delta_{x,\mu_2}^2}(\tau)\bigr|^2|\tau|^{\sigma-1}\,d\tau<\infty
\]
for some $0<\sigma<1$, then
\[
\hd D_x(\supp\mu_2)\ge \sigma.
\]
Moreover, if
\[
\int_{\R}\bigl|\widehat{\delta_{x,\mu_2}^2}(\tau)\bigr|^2\,d\tau<\infty,
\]
then
\[
\cL^1\bigl(D_x(\supp\mu_2)\bigr)>0.
\]
\end{lemma}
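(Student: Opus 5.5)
The Fourier identity is a direct computation from the definition of the pushforward. With the convention $\widehat{\rho}(\xi)=\int e^{-2\pi i \xi\cdot z}\,d\rho(z)$, I would write
\[
\widehat{\delta_{x,\mu_2}^2}(\tau)=\int e^{-2\pi i\tau|x-y|^2}\,d\mu_2(y)
\]
and expand $|x-y|^2=|x|^2-2x\cdot y+|y|^2$. This gives
\[
\widehat{\delta_{x,\mu_2}^2}(\tau)=e^{-2\pi i\tau|x|^2}\int e^{-2\pi i\,\tau(-2x,1)\cdot(y,|y|^2)}\,d\mu_2(y).
\]
Since $\nu=F_\#\mu_2$ with $F(y)=(y,|y|^2)$, the remaining integral is exactly $\widehat\nu(\tau(-2x,1))$. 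Taking moduli kills the unimodular phase and yields \eqref{eq:pinned-fourier-abs}.

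For the dimension claim, I would apply the energy identity \eqref{energyequiv} in dimension one to the finite measure $\delta^2_{x,\mu_2}$. For $0<\sigma<1$ it gives
\[
I_\sigma(\delta^2_{x,\mu_2})=C(1,\sigma)\int_\R|\widehat{\delta^2_{x,\mu_2}}(\tau)|^2|\tau|^{\sigma-1}\,d\tau<\infty.
\]
This measure is nonzero (it has mass $\mu_2(\R^d)=1$) and is supported on $D^2(\{x\},\supp\mu_2)$. By the potential-theoretic definition of Hausdorff dimension, $\hd D^2(\{x\},\supp\mu_2)\ge\sigma$. The map $t\mapsto\sqrt t$ is locally bi-Lipschitz on $(0,\infty)$, so this passes to $\hd D_x(\supp\mu_2)\ge\sigma$. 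The one point needing care is the possible value $0$ (which occurs if $x\in\supp\mu_2$). Removing the single point $0$ does not change Hausdorff dimension, and on compact subintervals of $(0,\infty)$ the square root is bi-Lipschitz. Countable stability then gives equality of dimensions, matching the remark $\hd D^2=\hd D$ made in the notation paragraph.

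For the Lebesgue measure claim, $\widehat{\delta^2_{x,\mu_2}}\in L^2(\R)$ and Plancherel imply that the measure $\delta^2_{x,\mu_2}$ is absolutely continuous with an $L^2$ density. The identity $\langle\delta^2_{x,\mu_2},\phi\rangle=\int\widehat{\phi}\,\overline{\widehat{\delta}}$ shows it acts as a bounded functional on $L^2$, so Riesz representation gives the density. A nonzero absolutely continuous measure must charge a set of positive Lebesgue measure inside its support, so $\cL^1(D^2(\{x\},\supp\mu_2))>0$. Finally, $t\mapsto\sqrt t$ is locally Lipschitz with locally Lipschitz inverse away from $0$, hence preserves positivity of Lebesgue measure. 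The only mildly delicate steps are the $L^2$-density argument and the behaviour of the square root near $0$, both of which are standard.
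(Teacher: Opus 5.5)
Your proposal is correct and follows essentially the same route as the paper. You expand $|x-y|^2$ to identify the Fourier transform of the pushforward with $\widehat\nu$ along the line $\tau(-2x,1)$, then use the one-dimensional energy identity \eqref{energyequiv} together with the potential-theoretic definition of $\hd$, and then Plancherel to obtain an $L^2$ density. Your explicit treatment of the point $0$ when transferring from squared distances to distances via $t\mapsto\sqrt t$ is slightly more careful than the paper, which simply invokes $\hd D^2(E,F)=\hd D(E,F)$.
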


\begin{proof}
By definition,
\[
\widehat{\delta_{x,\mu_2}^2}(\tau)
=
\int_{\Rd}e^{-2\pi i\tau|x-y|^2}\,d\mu_2(y).
\]
Expanding the square gives
\begin{align*}
\widehat{\delta_{x,\mu_2}^2}(\tau)
&=
\int_{\Rd}e^{-2\pi i\tau(|x|^2-2x\cdot y+|y|^2)}\,d\mu_2(y) \\
&=
 e^{-2\pi i\tau|x|^2}
\int_{\Rd}e^{-2\pi i\tau(-2x\cdot y+|y|^2)}\,d\mu_2(y) \\
&=
 e^{-2\pi i\tau|x|^2}
\int_{\Rdpo}e^{-2\pi i\tau(-2x,1)\cdot w}\,d\nu(w)
=
 e^{-2\pi i\tau|x|^2}\,\widehat\nu\bigl(\tau(-2x,1)\bigr).
\end{align*}
This proves \eqref{eq:pinned-fourier}, and \eqref{eq:pinned-fourier-abs} is immediate.

If
\[
\int_{\R}\bigl|\widehat{\delta_{x,\mu_2}^2}(\tau)\bigr|^2|\tau|^{\sigma-1}\,d\tau<\infty
\]
for some $0<\sigma<1$, then by \eqref{energyequiv}
\[
I_{\sigma}(\delta_{x,\mu_2}^2)<\infty.
\]
Hence
\[
\hd D_x(\supp\mu_2)
=
\hd D_x(\supp\mu_2)^2
=
\hd\supp\delta_{x,\mu_2}^2
\ge \sigma.
\]

If instead
\[
\int_{\R}\bigl|\widehat{\delta_{x,\mu_2}^2}(\tau)\bigr|^2\,d\tau<\infty,
\]
then Plancherel implies that $\delta_{x,\mu_2}^2$ is absolutely continuous with an $L^2$ density. Since $\delta_{x,\mu_2}^2$ has total mass $1$, that density is non-zero, and therefore
\[
\cL^1\bigl(D_x(\supp\mu_2)^2\bigr)=\cL^1\bigl(\supp\delta_{x,\mu_2}^2\bigr)>0.
\]
Therefore
\[
\cL^1\bigl(D_x(\supp\mu_2)\bigr)>0.
\] 
This completes the proof.
\end{proof}

Define
\begin{equation}
\label{eq:A-def}
A(\tau)\coloneqq \int_{\Rd} \bigl|\widehat\nu\bigl(\tau(-2x,1)\bigr)\bigr|^2\,d\mu_1(x).
\end{equation}

Fix $\chi\in C_c^\infty(\Rd)$ with $\chi\equiv 1$ on $B(0,R)$.
Set
\[
G_{\tau}(x):=\bigl|\widehat\nu\bigl(\tau(-2x,1)\bigr)\bigr|^2,
\qquad \text{and} \qquad 
f_{\tau}(x):=\chi(x)G_{\tau}(x).
\]
Then $A(\tau)=\int f_{\tau}(x)\,d\mu_1(x)$.

The next lemma gives an explicit formula for \(\widehat f_\tau\).

\begin{lemma}
The following identity holds
\begin{equation}\label{eq:fhat-theta1-theta2}
\widehat f_{\tau}(\xi)
=
\iint e^{-2\pi i\tau(|y|^2-|z|^2)}\,\widehat\chi\bigl(\xi-2\tau(y-z)\bigr)
\,d\mu_2(y)\,d\mu_2(z).
\end{equation}
\end{lemma}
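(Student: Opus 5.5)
We prove the identity by expanding $G_\tau$ as a double integral against $\mu_2$, multiplying by $\chi$, and then taking the Fourier transform in $x$ under the integral sign.

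\textbf{Step 1: expand $G_\tau$.} Using the definition $\nu=F_{\#}\mu_2$, we have
\[
\widehat\nu\bigl(\tau(-2x,1)\bigr)=\int e^{-2\pi i\tau(-2x\cdot y+|y|^2)}\,d\mu_2(y).
\]
Multiplying this by its complex conjugate, written with a second variable $z$, gives
\[
G_\tau(x)=\iint e^{-2\pi i\tau(|y|^2-|z|^2)}\,e^{2\pi i\,x\cdot 2\tau(y-z)}\,d\mu_2(y)\,d\mu_2(z).
\]

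\textbf{Step 2: take the Fourier transform.} Multiply by $\chi(x)$ and compute
\[
\widehat f_\tau(\xi)=\int e^{-2\pi i x\cdot\xi}\chi(x)G_\tau(x)\,dx.
\]
We interchange the $dx$ integral with the $d\mu_2(y)\,d\mu_2(z)$ integral; the justification is given below. The inner $x$-integral is then
\[
\int \chi(x)\,e^{-2\pi i x\cdot(\xi-2\tau(y-z))}\,dx=\widehat\chi\bigl(\xi-2\tau(y-z)\bigr),
\]
since a modulation by $e^{2\pi i x\cdot\eta}$ with $\eta=2\tau(y-z)$ shifts $\widehat\chi$ by $\eta$. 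The factor $e^{-2\pi i\tau(|y|^2-|z|^2)}$ does not depend on $x$ and passes outside the inner integral. This yields \eqref{eq:fhat-theta1-theta2}.

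\textbf{Justifying Fubini.} This is the only point requiring care, and it is routine. The integrand is bounded in absolute value by $|\chi(x)|$, because every other factor has modulus one. Moreover $\chi\in C_c^\infty\subset L^1(\Rd)$, and $\mu_2\times\mu_2$ is a probability measure. Hence the integrand lies in $L^1\bigl(dx\times d\mu_2\times d\mu_2\bigr)$, and Fubini's theorem applies.
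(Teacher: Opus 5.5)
Your proposal is correct and follows essentially the same route as the paper: expand $G_\tau$ as a double integral against $\mu_2\otimes\mu_2$, multiply by $\chi$, and interchange the $dx$ integral with the $\mu_2$ integrals to obtain $\widehat\chi\bigl(\xi-2\tau(y-z)\bigr)$. You also justify the interchange by Fubini in the same way as the paper, via the bound $|\chi(x)|$ together with $\|\chi\|_{L^1}\,\mu_2(\Rd)^2<\infty$.
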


    \begin{proof}
Fix \(\tau\in\mathbb R\). Since \(\nu=F_{\#}\mu_2\) and \(F(y)=(y,|y|^2)\), we have for every
\(x\in\mathbb R^d\),
\begin{align*}
\widehat\nu\bigl(\tau(-2x,1)\bigr)
&=
\int_{\mathbb R^{d+1}} e^{-2\pi i\, \tau(-2x,1)\cdot w}\,d\nu(w)=
\int_{\mathbb R^d} e^{-2\pi i\, \tau(-2x,1)\cdot F(y)}\,d\mu_2(y) \\
&=
\int_{\mathbb R^d} e^{-2\pi i\, \tau(-2x\cdot y+|y|^2)}\,d\mu_2(y)=
\int_{\mathbb R^d} e^{-2\pi i\tau(|y|^2-2x\cdot y)}\,d\mu_2(y).
\end{align*}
Therefore,
\begin{align*}
G_\tau(x)
&=
\left|\widehat\nu\bigl(\tau(-2x,1)\bigr)\right|^2 \\
&=
\left(\int_{\mathbb R^d} e^{-2\pi i\tau(|y|^2-2x\cdot y)}\,d\mu_2(y)\right)
\left(\int_{\mathbb R^d} e^{\,2\pi i\tau(|z|^2-2x\cdot z)}\,d\mu_2(z)\right) \\
&=
\iint_{\mathbb R^d\times\mathbb R^d}
e^{-2\pi i\tau(|y|^2-|z|^2)}\,e^{4\pi i\tau x\cdot (y-z)}
\,d\mu_2(y)\,d\mu_2(z).
\end{align*}

Since \(\chi\in C_c^\infty(\mathbb R^d)\subset L^1(\mathbb R^d)\) and \(\mu_2\) is finite,
\[
\iiint_{\mathbb R^d\times\mathbb R^d\times\mathbb R^d}
|\chi(x)|\,dx\,d\mu_2(y)\,d\mu_2(z)
=
\|\chi\|_{L^1}\,\mu_2(\mathbb R^d)^2<\infty.
\]
Hence, Fubini's theorem applies, and we obtain
\begin{align*}
\widehat f_\tau(\xi)
&=
\int_{\mathbb R^d} e^{-2\pi i x\cdot \xi}\,\chi(x)G_\tau(x)\,dx \\
&=
\iiint
e^{-2\pi i x\cdot \xi}\chi(x)\,
e^{-2\pi i\tau(|y|^2-|z|^2)}\,e^{4\pi i\tau x\cdot (y-z)}
\,dx\,d\mu_2(y)\,d\mu_2(z) \\
&=
\iint
e^{-2\pi i\tau(|y|^2-|z|^2)}
\left(
\int_{\mathbb R^d}
\chi(x)e^{-2\pi i x\cdot(\xi-2\tau(y-z))}\,dx
\right)
\,d\mu_2(y)\,d\mu_2(z).
\end{align*}
The inner integral is exactly
$
\widehat\chi\bigl(\xi-2\tau(y-z)\bigr),
$
so
\[
\widehat f_{\tau}(\xi)
=
\iint e^{-2\pi i\tau(|y|^2-|z|^2)}\,\widehat\chi\bigl(\xi-2\tau(y-z)\bigr)
\,d\mu_2(y)\,d\mu_2(z).
\]
This proves \eqref{eq:fhat-theta1-theta2}.
\end{proof}
To bound \(A(\tau)=\int f_\tau\,d\mu_1\), we will use the following duality estimate, which converts \(\J_{u,\theta}(\mu_1)\)-control into a weighted Fourier norm bound for \(f_\tau\).
\begin{lemma}\label{lem:duality-general}
Let $0<\theta\le 1$ and $0<u<d\theta$. Assume $\J_{u,\theta}(\mu)<\infty$, where
$\mu$ is a compactly supported probability measure on $\Rd$. Then, for every $f\in C_c(\Rd)$ satisfying
\[
\int_{\Rd}|\widehat f(\xi)|^{\frac{2}{2-\theta}}|\xi|^{\frac{\theta d-u}{2-\theta}}\,d\xi<\infty,
\]
one has
\begin{equation}\label{eq:duality-general}
\left|\int f\,d\mu\right|
\le
\J_{u,\theta}(\mu)^{1/2}
\left(\int_{\Rd}|\widehat f(\xi)|^{\frac{2}{2-\theta}}|\xi|^{\frac{\theta d-u}{2-\theta}}\,d\xi\right)^{\frac{2-\theta}{2}}.
\end{equation}
\end{lemma}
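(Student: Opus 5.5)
We pass to the Fourier side using Parseval and then apply Hölder's inequality with the exponents dictated by the definition of $\J_{u,\theta}$. Since $f\in C_c(\Rd)$ and $\mu$ is a finite compactly supported measure, Parseval gives
\[
\int f\,d\mu=\int_{\Rd}\widehat f(\xi)\,\overline{\widehat\mu(\xi)}\,d\xi .
\]
This is valid, for example, by first mollifying $\mu$ to $\mu*\phi_\epsilon$, applying Plancherel, and letting $\epsilon\to0$. The assumed weighted integrability of $\widehat f$, together with the bound we are about to prove, supplies the domination needed to pass to the limit.

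Next we split the weight $|\xi|^{\alpha}\cdot|\xi|^{-\alpha}$, where $\alpha$ is chosen so that the $\widehat\mu$ factor matches $\J_{u,\theta}$. We use Hölder with exponents
\[
p=\frac{2}{\theta},\qquad p'=\frac{2}{2-\theta},
\]
which are conjugate since $\frac{\theta}{2}+\frac{2-\theta}{2}=1$. We need
\[
|\widehat\mu|^{2/\theta}|\xi|^{\alpha\cdot 2/\theta}=|\widehat\mu|^{2/\theta}|\xi|^{u/\theta-d},
\]
so $\alpha=\frac{u-d\theta}{2}$. The dual factor is then
\[
|\widehat f|^{\frac{2}{2-\theta}}|\xi|^{-\alpha\frac{2}{2-\theta}}=|\widehat f|^{\frac{2}{2-\theta}}|\xi|^{\frac{\theta d-u}{2-\theta}},
\]
which is exactly the weight in the statement. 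Hölder therefore yields
\[
\Bigl|\int f\,d\mu\Bigr|\le \Bigl(\int|\widehat\mu|^{2/\theta}|\xi|^{u/\theta-d}\Bigr)^{\theta/2}\Bigl(\int|\widehat f|^{\frac{2}{2-\theta}}|\xi|^{\frac{\theta d-u}{2-\theta}}\Bigr)^{\frac{2-\theta}{2}}.
\]
The first factor is $\J_{u,\theta}(\mu)^{1/2}$, which gives \eqref{eq:duality-general}. The hypothesis $u<d\theta$ makes the weight on $\widehat f$ a positive power. This is harmless near the origin, where $\widehat f$ is bounded, and it makes the weight on $\widehat\mu$ locally integrable at $0$, since $u/\theta-d>-d$.

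The main technical point is justifying Parseval, because $\widehat\mu$ is not assumed to be in $L^2$ and $\widehat f$ need not be in $L^1$. To handle this, we replace $\mu$ by $\mu_\epsilon=\mu*\phi_\epsilon$ with $\phi\ge0$ a Schwartz bump satisfying $0\le\widehat\phi\le1$. Then $\int f\,d\mu_\epsilon=\int\widehat f\,\overline{\widehat\mu\,\widehat\phi(\epsilon\cdot)}$ holds for the continuous function $f$ against the smooth, rapidly decaying density. The Hölder bound applies uniformly in $\epsilon$, because $|\widehat{\mu_\epsilon}|\le|\widehat\mu|$. Dominated convergence on the Fourier side, using the integrable majorant $|\widehat f||\widehat\mu|$ whose integral is finite by the Hölder bound itself, together with weak convergence $\int f\,d\mu_\epsilon\to\int f\,d\mu$ (valid since $f$ is continuous with compact support), completes the argument.
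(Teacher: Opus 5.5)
Your argument is correct and is essentially the paper's: Parseval to write $\int f\,d\mu=\int\widehat f\,\overline{\widehat\mu}$, then H\"older with the conjugate exponents $2/\theta$ and $2/(2-\theta)$, which produces exactly $\J_{u,\theta}(\mu)^{1/2}$ times the weighted norm of $\widehat f$. The only difference is how Parseval is justified. You mollify $\mu$ with a normalised kernel such as a Gaussian, so that $|\widehat{\mu_\epsilon}|\le|\widehat\mu|$ and Plancherel applies. The paper instead regularises $f$ via a frequency cutoff $f_N=f*\psi_N$ with $\widehat{f_N}=\widehat f\,\widehat\psi(\cdot/N)$ compactly supported, so that Fourier inversion applies directly. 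In both cases the limit is taken using uniform convergence of a convolution tested against a bounded uniformly continuous function.
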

\begin{proof}
Set
\[
p:=\frac{2}{\theta},
\qquad \text{and} \qquad 
q:=\frac{2}{2-\theta},
\]
so that \(2 \leq p<\infty\), \(1<q\le 2\), and
\[
\frac1p+\frac1q=1.
\]

We first prove \eqref{eq:duality-general} for Schwartz functions.  To this end, let \(f\in \mathcal S(\mathbb R^d)\). Since \(\widehat f\in L^1(\mathbb R^d)\) and
\(|\widehat\mu(\xi)|\le \mu(\mathbb R^d)\) for all \(\xi\), Fourier inversion and Fubini give
\[
\int_{\mathbb R^d} f(x)\,d\mu(x)
=
\int_{\mathbb R^d}\widehat f(\xi)\,\overline{\widehat\mu(\xi)}\,d\xi.
\]
Hence
\begin{align*}
\left|\int f\,d\mu\right|
&\le
\int_{\mathbb R^d} |\widehat f(\xi)|\,|\widehat\mu(\xi)|\,d\xi \\
&=
\int_{\mathbb R^d}
\Bigl(|\widehat\mu(\xi)|^{\frac2\theta}|\xi|^{\frac{u}{\theta}-d}\Bigr)^{\frac{\theta}{2}}
\Bigl(|\widehat f(\xi)|^{\frac{2}{2-\theta}}|\xi|^{\frac{\theta d-u}{2-\theta}}\Bigr)^{\frac{2-\theta}{2}}
\,d\xi.
\end{align*}
Applying Hölder's inequality with exponents \(p=2/\theta\) and \(q=2/(2-\theta)\), we obtain
\begin{align*}
\left|\int f\,d\mu\right|
&\le
\left(
\int_{\mathbb R^d}
|\widehat\mu(\xi)|^{\frac2\theta}|\xi|^{\frac{u}{\theta}-d}\,d\xi
\right)^{\frac{\theta}{2}}
\left(
\int_{\mathbb R^d}
|\widehat f(\xi)|^{\frac{2}{2-\theta}}|\xi|^{\frac{\theta d-u}{2-\theta}}\,d\xi
\right)^{\frac{2-\theta}{2}} \\
&=
\J_{u,\theta}(\mu)^{1/2}
\left(
\int_{\mathbb R^d}
|\widehat f(\xi)|^{\frac{2}{2-\theta}}|\xi|^{\frac{\theta d-u}{2-\theta}}\,d\xi
\right)^{\frac{2-\theta}{2}}.
\end{align*}
Thus, \eqref{eq:duality-general} holds for all \(f\in \mathcal S(\mathbb R^d)\).  We now pass to the case \(f\in C_c(\mathbb R^d)\) satisfying
\[
\int_{\mathbb R^d}
|\widehat f(\xi)|^{\frac{2}{2-\theta}}|\xi|^{\frac{\theta d-u}{2-\theta}}\,d\xi
<\infty.
\]
Choose \(\psi\in \mathcal S(\mathbb R^d)\) such that
\[
\widehat\psi\in C_c^\infty(\mathbb R^d),\qquad
0\le \widehat\psi\le 1,\qquad
\widehat\psi(\xi)=1 \ \text{for } |\xi|\le 1.
\]
For \(N\ge 1\), define
\[
\psi_N(x):=N^d\psi(Nx).
\]
Then, by the scaling property of the Fourier transform,
\[
\widehat{\psi_N}(\xi)=\widehat\psi(\xi/N).
\]
Now define \(f_N\) by
\[
\widehat{f_N}(\xi):=\widehat f(\xi)\widehat{\psi_N}(\xi).
\]
Since \(f\in C_c(\mathbb R^d)\), its Fourier transform \(\widehat f\) is \(C^\infty\), and since
\(\widehat{\psi_N}\in C_c^\infty(\mathbb R^d)\), it follows that
\(\widehat{f_N}\in C_c^\infty(\mathbb R^d)\). Hence \(f_N\in \mathcal S(\mathbb R^d)\). Therefore,
by the estimate already proved for Schwartz functions,
\begin{equation}\label{eq:duality-general-fN}
\left|\int f_N\,d\mu\right|
\le
\J_{u,\theta}(\mu)^{1/2}
\left(
\int_{\mathbb R^d}
|\widehat f(\xi)|^{\frac{2}{2-\theta}}
|\widehat{\psi_N}(\xi)|^{\frac{2}{2-\theta}}
|\xi|^{\frac{\theta d-u}{2-\theta}}\,d\xi
\right)^{\frac{2-\theta}{2}}.
\end{equation}

Since \(0\le \widehat{\psi_N}\le 1\) and
\[
\widehat{\psi_N}(\xi)=\widehat\psi(\xi/N)\to 1
\]
for every $\xi\in\mathbb R^d$, the dominated convergence theorem yields
\[
\int_{\mathbb R^d}
|\widehat f(\xi)|^{\frac{2}{2-\theta}}
|\widehat{\psi_N}(\xi)|^{\frac{2}{2-\theta}}
|\xi|^{\frac{\theta d-u}{2-\theta}}\,d\xi
\to
\int_{\mathbb R^d}
|\widehat f(\xi)|^{\frac{2}{2-\theta}}
|\xi|^{\frac{\theta d-u}{2-\theta}}\,d\xi.
\]

It remains to show that
\[
\int f_N\,d\mu\to \int f\,d\mu.
\]
Since $
\widehat{f_N}=\widehat f\,\widehat{\psi_N},
$
we have
$
f_N=f*\psi_N.
$

We next show that \(f_N\to f\) uniformly on \(\mathbb R^d\). First,
\[
\int_{\mathbb R^d}\psi_N(x)\,dx=\widehat{\psi_N}(0)=\widehat\psi(0)=1.
\]
Also, for every \(\delta>0\),
\[
\int_{|x|\ge \delta} |\psi_N(x)|\,dx
=
\int_{|y|\ge N\delta} |\psi(y)|\,dy
\to 0
\qquad (N\to\infty),
\]
because \(\psi\in L^1(\mathbb R^d)\). Thus, \((\psi_N)\) is an approximate identity in \(L^1\). A direct computation shows that
\[
\|f_N-f\|_{L^\infty(\mathbb R^d)}\to 0.
\]

Since \(\mu\) is finite, it follows that
\[
\left|\int (f_N-f)\,d\mu\right|
\le
\mu(\mathbb R^d)\,\|f_N-f\|_{L^\infty(\mathbb R^d)}
\to 0.
\]
Thus
\[
\int f_N\,d\mu\to \int f\,d\mu.
\]

We may now pass to the limit in \eqref{eq:duality-general-fN} and obtain
\[
\left|\int f\,d\mu\right|
\le
\J_{u,\theta}(\mu)^{1/2}
\left(
\int_{\mathbb R^d}
|\widehat f(\xi)|^{\frac{2}{2-\theta}}
|\xi|^{\frac{\theta d-u}{2-\theta}}\,d\xi
\right)^{\frac{2-\theta}{2}}.
\]
This is exactly \eqref{eq:duality-general}.
\end{proof}

We next provide a weighted $L^2$ estimate for $\widehat f_\tau$.

\begin{lemma}\label{lem:agenera}
Let $0<m<d$ and $0<\gamma<d/2$, and set
\[
\eta_2:=\mu_2*\widetilde\mu_2,
\qquad \text{and} \qquad 
\widetilde\mu_2(E):=\mu_2(-E).
\]
Then, for every $\tau\in\R$,
\begin{equation}\label{eq:L2-weighted-general}
\int_{\Rd}|\widehat f_{\tau}(\xi)|^2|\xi|^{d-m}\,d\xi
\lesssim_{d,m,\gamma,R}
(1+|\tau|)^{d-m-2\gamma}I_{2\gamma}(\eta_2).
\end{equation}
\end{lemma}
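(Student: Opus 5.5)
We will compute $|\widehat f_\tau|^2$ directly from \eqref{eq:fhat-theta1-theta2} and integrate it against $|\xi|^{d-m}$. Expanding the square gives a quadruple integral over $y,z,y',z'$ against $\mu_2^{\otimes 4}$. The integrand is a phase times
\[
\widehat\chi\bigl(\xi-2\tau(y-z)\bigr)\,\overline{\widehat\chi\bigl(\xi-2\tau(y'-z')\bigr)}.
\]
The phase has modulus one, so we bound it by $1$ in absolute value. Since $\widehat\chi$ is Schwartz, we have $|\widehat\chi(\zeta)|\lesssim_N \langle\zeta\rangle^{-N}$. Writing $w=y-z$ and $w'=y'-z'$, the pushforward of $\mu_2\otimes\mu_2$ under $(y,z)\mapsto y-z$ is exactly $\eta_2=\mu_2*\widetilde\mu_2$. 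This yields
\[
\int|\widehat f_\tau(\xi)|^2|\xi|^{d-m}\,d\xi
\le \iint K_\tau(w,w')\,d\eta_2(w)\,d\eta_2(w'),
\]
where
\[
K_\tau(w,w')=\int \langle \xi-2\tau w\rangle^{-N}\langle \xi-2\tau w'\rangle^{-N}|\xi|^{d-m}\,d\xi .
\]

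Next we estimate the kernel. The standard convolution bound for two bumps of width $1$ centred at $2\tau w$ and $2\tau w'$ gives
\[
\int \langle \xi-a\rangle^{-N}\langle\xi-b\rangle^{-N}\,d\xi\lesssim \langle a-b\rangle^{-N'}.
\]
On the relevant region $|\xi|\lesssim 1+|\tau|$ the weight satisfies $|\xi|^{d-m}\lesssim (1+|\tau|)^{d-m}$, because $|w|\le 2R$. We handle $|\xi|$ large using the decay of the bumps, which is possible since $d-m>0$ is a fixed finite power. Hence
\[
K_\tau(w,w')\lesssim_{R} (1+|\tau|)^{d-m}\,\langle 2\tau(w-w')\rangle^{-N'} .
\]

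The final step converts this into energy. Since $2\gamma>0$ and $N'$ is large,
\[
\langle 2\tau(w-w')\rangle^{-N'}\lesssim (|\tau||w-w'|)^{-2\gamma}.
\]
This holds both when $|\tau||w-w'|\ge1$ and trivially when $|\tau||w-w'|<1$, because then the right-hand side is at least $1$. Therefore the double integral is
\[
\lesssim (1+|\tau|)^{d-m}|\tau|^{-2\gamma}I_{2\gamma}(\eta_2),
\]
which gives \eqref{eq:L2-weighted-general} for $|\tau|\ge1$.

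For $|\tau|\le1$ we use a separate argument. The function $f_\tau$ is smooth with compact support in $\supp\chi$, and its $C^k$ norms are bounded uniformly in $|\tau|\le 1$, since $\widehat\nu(\tau(-2x,1))$ has $x$-derivatives bounded by powers of $R$. Consequently $\widehat f_\tau$ decays rapidly, and because $m<d$ the integral is bounded by a constant. This constant is dominated by $I_{2\gamma}(\eta_2)$: indeed $\eta_2$ is a probability measure supported in $B(0,2R)$, so
\[
I_{2\gamma}(\eta_2)\ge (4R)^{-2\gamma}.
\]
We need $2\gamma<d$ only so that this energy is the natural finite quantity; it is not needed for the kernel bound itself.

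The main obstacle is the kernel estimate in the second step. The weight $|\xi|^{d-m}$ must be controlled by $(1+|\tau|)^{d-m}$ uniformly in $w,w'$, and the tails where $|\xi|\gg|\tau|$ must be absorbed cleanly by the Schwartz decay of $\widehat\chi$ so that no worse power of $\tau$ appears.
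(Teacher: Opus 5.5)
Your proposal is correct and follows essentially the paper's route: expand $|\widehat f_\tau|^2$ as a quadruple integral against $\mu_2^{\otimes 4}$, bound the $\xi$-integral by $(1+|\tau|)^{d-m}\langle \tau(w-w')\rangle^{-N}$, and convert to $I_{2\gamma}(\eta_2)$. The paper does your near/far split for the weight in one line via Peetre's inequality $\langle\xi+2\tau w\rangle^{d-m}\lesssim\langle\tau w\rangle^{d-m}\langle\xi\rangle^{d-m}$. Your pointwise bound $\langle t\rangle^{-N}\le t^{-2\gamma}$ is a slightly cleaner substitute for the paper's dyadic decomposition. Your separate smoothness argument for $|\tau|\le 1$ works but is not needed: the kernel bound with $\langle\cdot\rangle^{-N}\le 1$ already gives a constant there, which is then absorbed using $I_{2\gamma}(\eta_2)\ge(4R)^{-2\gamma}$ exactly as you do.
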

\begin{proof}
Fix $N>2\gamma$. Since $\widehat\chi\in\mathcal S(\Rd)$, all integrals below are
absolutely convergent, and hence Fubini's theorem applies. Using
\eqref{eq:fhat-theta1-theta2}, we obtain
\begin{align*}
\int_{\Rd}|\widehat f_{\tau}(\xi)|^2|\xi|^{d-m}\,d\xi
&\le
\iiiint
\Bigl|
\int_{\Rd}
|\xi|^{d-m}\widehat\chi(\xi-2\tau u)
\overline{\widehat\chi(\xi-2\tau u')}
\,d\xi
\Bigr|
\,d\mu_2(y)\,d\mu_2(z)\,d\mu_2(y')\,d\mu_2(z'),
\end{align*}
where $u=y-z$ and $u'=y'-z'$. After the change of variables
$\xi\mapsto \xi+2\tau u$, the inner integral becomes
\[
\int_{\Rd} |\xi+2\tau u|^{d-m}\widehat\chi(\xi)
\overline{\widehat\chi(\xi+r)}\,d\xi,
\qquad r:=2\tau(u-u').
\]
Since $0<m<d$, we have $d-m>0$, and therefore
\[
|\xi+2\tau u|^{d-m}
\le
\ang{\xi+2\tau u}^{\,d-m}
\lesssim
\ang{\tau u}^{\,d-m}\ang{\xi}^{\,d-m}
\]
by Peetre's inequality. Hence
\[
\Bigl|
\int_{\Rd} |\xi+2\tau u|^{d-m}\widehat\chi(\xi)
\overline{\widehat\chi(\xi+r)}\,d\xi
\Bigr|
\lesssim
\ang{\tau u}^{\,d-m}\,\kappa(r),
\]
where
\[
\kappa(r):=
\int_{\Rd}\ang{\xi}^{\,d-m}|\widehat\chi(\xi)|\,|\widehat\chi(\xi+r)|\,d\xi.
\]
Since $\widehat\chi\in\mathcal S(\Rd)$, for every $N>0$ one has
\[
\kappa(r)\lesssim_N \ang{r}^{-N}.
\]
Also, $\spt\mu_2\subset B(0,R)$ implies $|u|\le 2R$, and therefore
\[
\ang{\tau u}^{\,d-m}\lesssim_R (1+|\tau|)^{d-m}.
\]
Consequently,
\[
\Bigl|
\int_{\Rd}
|\xi|^{d-m}\widehat\chi(\xi-2\tau u)
\overline{\widehat\chi(\xi-2\tau u')}\,d\xi
\Bigr|
\lesssim_{N,R}
(1+|\tau|)^{d-m}(1+|\tau||u-u'|)^{-N}.
\]
It follows that
\begin{equation}\label{eq:kernel-bound-agenera}
\int_{\Rd}|\widehat f_{\tau}(\xi)|^2|\xi|^{d-m}\,d\xi
\lesssim_{N,R}
(1+|\tau|)^{d-m}
\iiiint (1+|\tau||u-u'|)^{-N}\,d\mu_2^{\otimes 4}.
\end{equation}

Since $u=y-z$ and $u'=y'-z'$ are distributed according to
$\eta_2=\mu_2*\widetilde\mu_2$, \eqref{eq:kernel-bound-agenera} becomes
\[
\int_{\Rd}|\widehat f_{\tau}(\xi)|^2|\xi|^{d-m}\,d\xi
\lesssim_{N,R}
(1+|\tau|)^{d-m}
\iint (1+|\tau||u-u'|)^{-N}\,d\eta_2(u)\,d\eta_2(u').
\]
Set
\[
J(\tau):=
\iint (1+|\tau||u-u'|)^{-N}\,d\eta_2(u)\,d\eta_2(u').
\]
We claim that
\begin{equation}\label{eq:kernel-to-energy-agenera}
J(\tau)\lesssim_{\gamma,N,R}(1+|\tau|)^{-2\gamma}I_{2\gamma}(\eta_2)
\qquad (\tau\in\R).
\end{equation}

If $|\tau|\ge 1$, then
\[
(1+|\tau|r)^{-N}\lesssim \sum_{k\ge 0}2^{-kN}\mathbf 1_{\{r\le 2^{k+1}/|\tau|\}},
\]
so
\[
J(\tau)\lesssim
\sum_{k\ge 0}2^{-kN}\,
\eta_2\otimes\eta_2\Bigl(\{|u-u'|\le 2^{k+1}/|\tau|\}\Bigr).
\]
For every $r>0$,
\[
I_{2\gamma}(\eta_2)
\ge
\iint_{|u-u'|\le r}|u-u'|^{-2\gamma}\,d\eta_2(u)\,d\eta_2(u')
\ge
r^{-2\gamma}\,\eta_2\otimes\eta_2\Bigl(\{|u-u'|\le r\}\Bigr),
\]
and hence
\[
\eta_2\otimes\eta_2\Bigl(\{|u-u'|\le r\}\Bigr)
\le r^{2\gamma}I_{2\gamma}(\eta_2).
\]
With $r=2^{k+1}/|\tau|$, this gives
\[
J(\tau)\lesssim
|\tau|^{-2\gamma}I_{2\gamma}(\eta_2)
\sum_{k\ge 0}2^{-kN}2^{2\gamma(k+1)}
\lesssim_{\gamma,N}
|\tau|^{-2\gamma}I_{2\gamma}(\eta_2),
\]
since $N>2\gamma$.

If $|\tau|\le 1$, then $J(\tau)\le \eta_2(\Rd)^2$. Also
$\spt\eta_2\subset B(0,2R)$, and hence $|u-u'|\le 4R$ on
$\spt\eta_2\times\spt\eta_2$. Therefore
\[
I_{2\gamma}(\eta_2)
=
\iint |u-u'|^{-2\gamma}\,d\eta_2(u)\,d\eta_2(u')
\ge
(4R)^{-2\gamma}\eta_2(\Rd)^2.
\]
Thus, $\eta_2(\Rd)^2\lesssim_{\gamma,R} I_{2\gamma}(\eta_2)$, and since
$(1+|\tau|)^{-2\gamma}\approx 1$ for $|\tau|\le 1$, we obtain
\eqref{eq:kernel-to-energy-agenera} in this range as well.

In other words, we get
\[
\int_{\Rd}|\widehat f_{\tau}(\xi)|^2|\xi|^{d-m}\,d\xi
\lesssim_{d,m,\gamma,R}
(1+|\tau|)^{d-m-2\gamma}I_{2\gamma}(\eta_2),
\]
which is exactly \eqref{eq:L2-weighted-general}.
\end{proof}

In the next step, we control $I_{2\gamma}(\eta_2)$ directly from $\J_{v,\theta_2}(\mu_2)$.

\begin{lemma}\label{lem:eta-energy-general-theta}
Let $0<\theta_2\le 1$ and $v>0$.
Set
\[
a(v):=\min\!\left\{\frac d2,\, v, \frac{v}{2\theta_2}\right\}.
\]
If $\J_{v,\theta_2}(\mu_2)<\infty$, then for every $0<\gamma<a(v)$,
\begin{equation}\label{eq:eta-energy-general-theta}
I_{2\gamma}(\eta_2)^{1/2}
\lesssim_{d,\gamma,\theta_2,v,R}
\J_{v,\theta_2}(\mu_2)^{\rho(\theta_2)},
\end{equation}
where \[
\rho(\theta_2):=
\begin{cases}
1, & 0<\theta_2\le \frac12,\\[1ex]
\dfrac{1}{2\theta_2}, & \frac12\le \theta_2\le 1.
\end{cases}
\]
\end{lemma}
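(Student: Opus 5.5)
The plan is to pass to the Fourier side and reduce the bound to a pointwise or H\"older comparison between the integrand of the $2\gamma$-energy of $\eta_2$ and that of $\J_{v,\theta_2}(\mu_2)$. Since $\eta_2=\mu_2*\widetilde\mu_2$, we have $\widehat{\eta_2}=|\widehat{\mu_2}|^2$. Moreover $0<2\gamma<d$ because $\gamma<d/2$, so \eqref{energyequiv} gives
\[
I_{2\gamma}(\eta_2)=C(d,2\gamma)\int_{\Rd}|\widehat{\mu_2}(\xi)|^4|\xi|^{2\gamma-d}\,d\xi .
\]
I split this integral into $|\xi|\le 1$ and $|\xi|>1$.

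\emph{Low frequencies and normalisation.} On $|\xi|\le 1$ we use $|\widehat{\mu_2}|\le 1$ and the local integrability of $|\xi|^{2\gamma-d}$, which gives a bound $\lesssim_{d,\gamma}1$. To absorb constants into powers of $\J$, note that $\spt\mu_2\subset B(0,R)$ implies $|\widehat{\mu_2}(\xi)|\ge 1/2$ for $|\xi|\le c/R$. Hence $\J_{v,\theta_2}(\mu_2)\gtrsim_{d,v,\theta_2,R}1$, so any additive $O(1)$ term is dominated by $\J^{\rho}$ for any $\rho>0$. It therefore suffices to treat $|\xi|>1$.

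\emph{Case $\frac12\le\theta_2\le1$.} Here $4-2/\theta_2\ge 0$, so $|\widehat{\mu_2}|^4\le|\widehat{\mu_2}|^{2/\theta_2}$. Since $\gamma<v/(2\theta_2)$, on $|\xi|>1$ we also have $|\xi|^{2\gamma-d}\le|\xi|^{v/\theta_2-d}$. Together these give
\[
\int_{|\xi|>1}|\widehat{\mu_2}|^4|\xi|^{2\gamma-d}\,d\xi\le \J_{v,\theta_2}(\mu_2)^{1/\theta_2},
\]
and therefore $I_{2\gamma}(\eta_2)^{1/2}\lesssim \J_{v,\theta_2}(\mu_2)^{1/(2\theta_2)}$, which is the case $\rho=1/(2\theta_2)$.

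\emph{Case $0<\theta_2\le\frac12$.} Here the power $2/\theta_2\ge4$ is too large for a pointwise comparison, so I use H\"older instead. Write
\[
|\widehat{\mu_2}|^4|\xi|^{2\gamma-d}=\bigl(|\widehat{\mu_2}|^{2/\theta_2}|\xi|^{v/\theta_2-d}\bigr)^{2\theta_2}\,|\xi|^{2\gamma-d-2v+2\theta_2 d}.
\]
Apply H\"older on $|\xi|>1$ with exponents $1/(2\theta_2)$ and $1/(1-2\theta_2)$; the case $\theta_2=1/2$ is the trivial one handled by the previous case. The first factor contributes $\bigl(\int|\widehat{\mu_2}|^{2/\theta_2}|\xi|^{v/\theta_2-d}\bigr)^{2\theta_2}=\J_{v,\theta_2}(\mu_2)^2$. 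The second factor is $\bigl(\int_{|\xi|>1}|\xi|^{(2\gamma-2v-d+2\theta_2 d)/(1-2\theta_2)}d\xi\bigr)^{1-2\theta_2}$. This is finite exactly when $2\gamma-2v-d+2\theta_2 d<-d(1-2\theta_2)$, which simplifies to $\gamma<v$, and that holds by assumption. Hence $I_{2\gamma}(\eta_2)^{1/2}\lesssim\J_{v,\theta_2}(\mu_2)$, giving $\rho=1$.

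The only delicate point is the bookkeeping of exponents in the H\"older step for $\theta_2<1/2$, namely checking that the leftover power of $|\xi|$ is integrable at infinity precisely when $\gamma<v$. This step also explains why the three constraints in $a(v)$ appear: $\gamma<d/2$ is needed for \eqref{energyequiv}, $\gamma<v$ for the small-$\theta_2$ case, and $\gamma<v/(2\theta_2)$ for the large-$\theta_2$ case.
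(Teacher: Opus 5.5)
Your proposal is correct and follows essentially the same argument as the paper. Both write $I_{2\gamma}(\eta_2)$ on the Fourier side using $\widehat{\eta_2}=|\widehat{\mu_2}|^2$, bound the region $|\xi|\le 1$ trivially, use the pointwise comparison for $\theta_2\ge 1/2$ and H\"older with exponents $1/(2\theta_2)$, $1/(1-2\theta_2)$ for $\theta_2<1/2$ (where integrability forces $\gamma<v$), and absorb the additive constant using $\J_{v,\theta_2}(\mu_2)\gtrsim 1$.
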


\begin{proof}
Since $0<\gamma<d/2$, \eqref{energyequiv} gives
\[
I_{2\gamma}(\eta_2)
\approx_{d,\gamma}
\int_{\Rd}|\widehat{\eta_2}(\xi)|^2|\xi|^{2\gamma-d}\,d\xi
=
\int_{\Rd}|\widehat\mu_2(\xi)|^4|\xi|^{2\gamma-d}\,d\xi,
\]
since $\widehat{\eta_2}(\xi)=|\widehat\mu_2(\xi)|^2$.

Since $\mu_2$ is a probability measure, $|\widehat\mu_2(\xi)|\le 1$ for all $\xi$.
Therefore,
\[
\int_{|\xi|\le 1}|\widehat\mu_2(\xi)|^4|\xi|^{2\gamma-d}\,d\xi
\le
\int_{|\xi|\le 1}|\xi|^{2\gamma-d}\,d\xi
\lesssim_{d,\gamma}1.
\]

It remains to estimate the contribution from $\{|\xi|>1\}$.

\smallskip
\noindent
\textbf{Case 1: $\frac12\le \theta_2\le 1$.}
Since $2/\theta_2\le 4$ and $0\le |\widehat\mu_2|\le 1$, we have
\[
|\widehat\mu_2(\xi)|^4\le |\widehat\mu_2(\xi)|^{2/\theta_2}.
\]
Also, since $\gamma<a(v)\le v/(2\theta_2)$ and $|\xi|>1$,
\[
|\xi|^{2\gamma-d}\le |\xi|^{v/\theta_2-d}.
\]
Hence,
\[
\int_{|\xi|>1}|\widehat\mu_2(\xi)|^4|\xi|^{2\gamma-d}\,d\xi
\le
\int_{|\xi|>1}|\widehat\mu_2(\xi)|^{2/\theta_2}|\xi|^{v/\theta_2-d}\,d\xi
\le
\J_{v,\theta_2}(\mu_2)^{1/\theta_2}.
\]

\smallskip
\noindent
\textbf{Case 2: $0<\theta_2<\frac12$.}
Set
\[
g(\xi):=
|\widehat\mu_2(\xi)|^{2/\theta_2}|\xi|^{v/\theta_2-d}.
\]
Then, for $|\xi|>1$,
\[
|\widehat\mu_2(\xi)|^4|\xi|^{2\gamma-d}
=
g(\xi)^{2\theta_2}\,|\xi|^{2\gamma-2v+d(2\theta_2-1)}.
\]
Apply H\"older's inequality with exponents
\[
p=\frac{1}{2\theta_2},
\qquad \text{and} \qquad 
p'=\frac{1}{1-2\theta_2}.
\]
Since $\gamma<a(v)\le v$, we have $\gamma<v$, and therefore
\[
\frac{2\gamma-2v+d(2\theta_2-1)}{1-2\theta_2}<-d.
\]
Hence,
\begin{align*}
\int_{|\xi|>1}|\widehat\mu_2(\xi)|^4|\xi|^{2\gamma-d}\,d\xi
&=
\int_{|\xi|>1}g(\xi)^{2\theta_2}|\xi|^{2\gamma-2v+d(2\theta_2-1)}\,d\xi\\
&\le
\Bigl(\int_{|\xi|>1}g(\xi)\,d\xi\Bigr)^{2\theta_2}
\Bigl(
\int_{|\xi|>1}
|\xi|^{\frac{2\gamma-2v+d(2\theta_2-1)}{1-2\theta_2}}\,d\xi
\Bigr)^{1-2\theta_2}\\
&\lesssim_{d,\gamma,\theta_2,v}
\Bigl(
\int_{\Rd}
|\widehat\mu_2(\xi)|^{2/\theta_2}|\xi|^{v/\theta_2-d}\,d\xi
\Bigr)^{2\theta_2}
=
\J_{v,\theta_2}(\mu_2)^2.
\end{align*}

Combining the estimates together, we obtain
\[
I_{2\gamma}(\eta_2)
\lesssim_{d,\gamma,\theta_2,v}
1+\J_{v,\theta_2}(\mu_2)^{2\rho(\theta_2)}.
\]
Since $\supp\mu_2\subset B(0,R)$ and $\mu_2$ is a probability measure,
\[
|\widehat\mu_2(\xi)-1|
=
\Bigl|
\int \bigl(e^{-2\pi i x\cdot \xi}-1\bigr)\,d\mu_2(x)
\Bigr|
\le
2\pi R|\xi|.
\]
Thus, if $|\xi|\le (4\pi R)^{-1}$, then $|\widehat\mu_2(\xi)|\ge \frac12$. Therefore
\[
\int_{\Rd}
|\widehat\mu_2(\xi)|^{2/\theta_2}|\xi|^{v/\theta_2-d}\,d\xi
\ge
2^{-2/\theta_2}\int_{|\xi|\le (4\pi R)^{-1}} |\xi|^{v/\theta_2-d}\,d\xi
\gtrsim_{v,\theta_2,d,R} 1,
\]
and hence
\[
\cJ_{v,\theta_2}(\mu_2)\gtrsim_{v,\theta_2,d,R} 1.
\]
So the additive constant can be absorbed. Taking square roots proves
\eqref{eq:eta-energy-general-theta}.
\end{proof}

We now derive the mixed decay estimate for $A(\tau)$.

\begin{proposition}\label{prop:A-decay-theta1-theta2}
Assume
\[
0<\theta_1\le 1,
\qquad
0<u<d\theta_1,
\qquad
0<\theta_2\le 1,
\qquad
v>0,
\]
and
\[
\J_{u,\theta_1}(\mu_1)<\infty,
\qquad
\J_{v,\theta_2}(\mu_2)<\infty.
\]
Set
\[
a(v):=\min\!\left\{\frac d2,\, v, \frac{v}{2\theta_2}\right\}.
\]
Then, for every $0<\gamma<a(v)$,
\begin{equation}\label{eq:A-decay-theta1-theta2}
A(\tau)
\lesssim_{d,u,\theta_1,\gamma,\theta_2,v,R}
\J_{u,\theta_1}(\mu_1)^{1/2}
\J_{v,\theta_2}(\mu_2)^{\theta_1\rho(\theta_2)}
(1+|\tau|)^{-\beta_\gamma(u)},
\end{equation}
where
\[
\beta_\gamma(u):=\frac{u+2\theta_1\gamma-d\theta_1}{2}.
\]
\end{proposition}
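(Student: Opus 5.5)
The plan is to write $A(\tau)=\int f_\tau\,d\mu_1$ and apply Lemma \ref{lem:duality-general} with $\theta=\theta_1$. This is legitimate: $f_\tau=\chi G_\tau$ is continuous and compactly supported, and $0<u<d\theta_1$ by hypothesis. It gives
\[
A(\tau)\le \J_{u,\theta_1}(\mu_1)^{1/2}\Bigl(\int_{\Rd}|\widehat f_\tau(\xi)|^{q}|\xi|^{w}\,d\xi\Bigr)^{\frac{2-\theta_1}{2}},
\qquad q=\tfrac{2}{2-\theta_1},\quad w=\tfrac{\theta_1 d-u}{2-\theta_1}.
\]
Everything then reduces to bounding the weighted $L^q$ norm of $\widehat f_\tau$ by a power of the weighted $L^2$ norm from Lemma \ref{lem:agenera}. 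That weighted $L^2$ norm is in turn controlled by $I_{2\gamma}(\eta_2)\lesssim \J_{v,\theta_2}(\mu_2)^{2\rho(\theta_2)}$ via Lemma \ref{lem:eta-energy-general-theta}.

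The mechanism for passing from $L^q$ to $L^2$ is to split $|\widehat f_\tau|^q=(|\widehat f_\tau|^2)^{b}\,|\widehat f_\tau|^{q-2b}$ with $b=\theta_1/(2-\theta_1)=\theta_1 q/2$. This exponent is forced by bookkeeping. After raising to the power $(2-\theta_1)/2$, the $L^2$ factor appears to the power $\theta_1/2$. Lemma \ref{lem:agenera} and Lemma \ref{lem:eta-energy-general-theta} then produce exactly $\J_{v,\theta_2}(\mu_2)^{\theta_1\rho(\theta_2)}$, together with the factor $(1+|\tau|)^{\theta_1(d-m-2\gamma)/2}$. The remaining factor $|\widehat f_\tau|^{q-2b}$ will be estimated in $L^\infty$. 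From \eqref{eq:fhat-theta1-theta2} we have $|\widehat f_\tau(\xi)|\le\|\widehat\chi\|_{L^1}$, and more precisely
\[
|\widehat f_\tau(\xi)|\lesssim_N\int(1+|\xi-2\tau u|)^{-N}\,d\eta_2(u).
\]
Since $\spt\eta_2\subset B(0,2R)$, the transform $\widehat f_\tau$ is essentially supported in $|\xi|\lesssim_R 1+|\tau|$, with Schwartz tails outside. I would then apply H\"older with exponents $1/b$ and $1/(1-b)$. The weighted $L^2$ factor is $\int|\widehat f_\tau|^2|\xi|^{d-m}$, and the leftover weight $|\xi|^{(w-b(d-m))/(1-b)}$ is integrated over $|\xi|\lesssim|\tau|$, with the tail region absorbed using the Schwartz decay. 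I will choose $m<d$ as close to $u$ (or to $d$) as needed to optimise.

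The main obstacle is the power of $\tau$. Integrating the leftover power weight over the full ball of radius $|\tau|$ costs a factor that seems to overshoot the target exponent $\beta_\gamma(u)=(u+2\theta_1\gamma-d\theta_1)/2$. In the $\tau$-exponent, the parameter $m$ cancels, and at $\theta_1=1$ one recovers exactly $(u+2\gamma-d)/2$. For $\theta_1<1$ this crude count loses a term of order $d(1-\theta_1)$, so the $L^\infty$ part must be treated more efficiently. What I would try first is to avoid the bound $|\widehat f_\tau|\lesssim1$ on the whole ball. Instead I would use that $\widehat f_\tau$ is concentrated on the $O(1)$-neighbourhood of $2\tau\,\spt\eta_2$, and estimate $\int|\widehat f_\tau|^{q-2b}$ there by $L^1$-type bounds. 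The relevant bound is $\int|\widehat f_\tau|\,d\xi\lesssim\iint\int|\widehat\chi(\xi-2\tau u)|\,d\xi\,d\eta_2\lesssim1$, interpolated against the $L^2$ bound. This replaces the volume factor $|\tau|^d$ by an $O(1)$ quantity, and I expect it to produce the $d\theta_1$ rather than $d$ in $\beta_\gamma(u)$. Finally, for $|\tau|\le1$ the estimate is trivial, since $A(\tau)\le1$ and both $\J$ factors are bounded below, as observed at the end of the proof of Lemma \ref{lem:eta-energy-general-theta}.
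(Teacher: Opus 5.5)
\textbf{Verdict: genuine gap.} Your framework is the paper's: duality with $\theta=\theta_1$, the split exponent $b=\theta_1/(2-\theta_1)$, and H\"older with exponents $1/b$ and $1/(1-b)$. But the proposal stops short of proving the bound. The $L^\infty$ route loses $d(1-\theta_1)$ in the power of $(1+|\tau|)$, as you note. The repair is only sketched, and as written it targets the wrong quantity:
\begin{itemize}
\item Since $q-2b=2(1-\theta_1)/(2-\theta_1)<1$, the integral $\int|\widehat f_\tau|^{q-2b}$ is not controlled by $\|\widehat f_\tau\|_{L^1}$. Interpolating $L^1$ with $L^2$ does not help either, because it only reaches exponents in $[1,2]$.
\item Restricting to the $O(1)$-neighbourhood of $2\tau\,\spt\eta_2$ does not remove the volume factor. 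That set can have measure comparable to $|\tau|^d$, for instance when $\spt\mu_2-\spt\mu_2$ has interior.
\item The relevant choice of $m$ is not $m\approx u$.
\end{itemize}

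\textbf{The missing step.} Take $\theta_1<1$; the case $\theta_1=1$ is immediate. Do not extract $|\widehat f_\tau|^{q-2b}$ in $L^\infty$; keep it inside the $1/(1-b)$ H\"older factor. Because $(q-2b)/(1-b)=1$, you get
\[
\int|\widehat f_\tau|^{q}|\xi|^{w}\,d\xi\le\Bigl(\int|\widehat f_\tau|^{2}|\xi|^{d-m}\,d\xi\Bigr)^{b}\Bigl(\int|\widehat f_\tau(\xi)|\,|\xi|^{\frac{w-b(d-m)}{1-b}}\,d\xi\Bigr)^{1-b}.
\]
Now choose $m=u/\theta_1$, which Lemma~\ref{lem:agenera} allows precisely because $0<u<d\theta_1$. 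Then $b(d-m)=w$, so the last factor is $\|\widehat f_\tau\|_{L^1}\le\|\widehat\chi\|_{L^1}$, uniformly in $\tau$ by \eqref{eq:fhat-theta1-theta2}.

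Raising to the power $(2-\theta_1)/2$ gives $\|\widehat f_\tau\|_{L^1}^{1-\theta_1}\bigl(\int|\widehat f_\tau|^2|\xi|^{d-u/\theta_1}\,d\xi\bigr)^{\theta_1/2}$. Lemmas~\ref{lem:agenera} and \ref{lem:eta-energy-general-theta} then give the factor $(1+|\tau|)^{\frac{\theta_1}{2}(d-u/\theta_1-2\gamma)}=(1+|\tau|)^{-\beta_\gamma(u)}$ together with $\J_{v,\theta_2}(\mu_2)^{\theta_1\rho(\theta_2)}$.

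This is exactly the paper's argument. It needs no localisation, no Schwartz-tail analysis and no separate treatment of $|\tau|\le 1$. The same inequality also verifies the finiteness hypothesis of Lemma~\ref{lem:duality-general} for $f_\tau$, which your legitimacy check omitted.
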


\begin{proof}
Set
\[
m:=\frac{u}{\theta_1}.
\]
Since $0<u<d\theta_1$, we have $0<m<d$.

First, \eqref{eq:fhat-theta1-theta2} and Fubini imply
\[
\|\widehat f_{\tau}\|_{L^1(\Rd)}
\le
\iint \|\widehat\chi(\cdot-2\tau(y-z))\|_{L^1(\Rd)}\,d\mu_2(y)\,d\mu_2(z)
=
\|\widehat\chi\|_{L^1(\Rd)}.
\]
Also, Lemma~\ref{lem:agenera} gives
\[
\int_{\Rd}|\widehat f_{\tau}(\xi)|^2|\xi|^{d-m}\,d\xi
\lesssim_{d,m,\gamma,R}
(1+|\tau|)^{d-m-2\gamma}I_{2\gamma}(\eta_2).
\]
Hence, the weighted $L^{2/(2-\theta_1)}$ quantity in
Lemma~\ref{lem:duality-general} is finite, and the lemma applies to $f_\tau$.

Applying Lemma~\ref{lem:duality-general} with $\theta=\theta_1$ and $f=f_\tau$,
we obtain
\[
A(\tau)
\le
\J_{u,\theta_1}(\mu_1)^{1/2}
\left(\int_{\Rd}|\widehat f_{\tau}(\xi)|^{\frac{2}{2-\theta_1}}
|\xi|^{\frac{\theta_1 d-u}{2-\theta_1}}\,d\xi\right)^{\frac{2-\theta_1}{2}}.
\]
If $0<\theta_1<1$, then H\"older's inequality gives
\begin{align*}
\int_{\Rd}|\widehat f_{\tau}(\xi)|^{\frac{2}{2-\theta_1}}
|\xi|^{\frac{\theta_1 d-u}{2-\theta_1}}\,d\xi
& =
\int_{\Rd}
|\widehat f_{\tau}(\xi)|^{\frac{2(1-\theta_1)}{2-\theta_1}}
\Bigl(|\widehat f_{\tau}(\xi)|^2|\xi|^{d-u/\theta_1}\Bigr)^{\frac{\theta_1}{2-\theta_1}}
\,d\xi\\
&\le
\|\widehat f_{\tau}\|_{L^1(\Rd)}^{\frac{2(1-\theta_1)}{2-\theta_1}}
\left(
\int_{\Rd}|\widehat f_{\tau}(\xi)|^2|\xi|^{d-u/\theta_1}\,d\xi
\right)^{\frac{\theta_1}{2-\theta_1}}.
\end{align*}
Raising both sides to the power $(2-\theta_1)/2$, we get
\begin{equation}\label{eq:interp-theta1-theta2}
\left(\int_{\Rd}|\widehat f_{\tau}(\xi)|^{\frac{2}{2-\theta_1}}
|\xi|^{\frac{\theta_1 d-u}{2-\theta_1}}\,d\xi\right)^{\frac{2-\theta_1}{2}}
\le
\|\widehat f_{\tau}\|_{L^1(\Rd)}^{1-\theta_1}
\left(
\int_{\Rd}|\widehat f_{\tau}(\xi)|^2|\xi|^{d-u/\theta_1}\,d\xi
\right)^{\theta_1/2}.
\end{equation}
If $\theta_1=1$, then \eqref{eq:interp-theta1-theta2} is immediate.

Combining the inequalities, we obtain
\[
A(\tau)
\lesssim_{d,u,\theta_1,\gamma,R}
\J_{u,\theta_1}(\mu_1)^{1/2}
I_{2\gamma}(\eta_2)^{\theta_1/2}
(1+|\tau|)^{\frac{\theta_1}{2}(d-u/\theta_1-2\gamma)}.
\]
Since
\[
\frac{\theta_1}{2}\left(d-\frac{u}{\theta_1}-2\gamma\right)
=
-\frac{u+2\theta_1\gamma-d\theta_1}{2}
=
-\beta_\gamma(u),
\]
and since Lemma~\ref{lem:eta-energy-general-theta} gives
\[
I_{2\gamma}(\eta_2)^{1/2}
\lesssim_{d,\gamma,\theta_2,v,R}
\J_{v,\theta_2}(\mu_2)^{\rho(\theta_2)},
\]
we arrive at \eqref{eq:A-decay-theta1-theta2}.
\end{proof}
We will also need the following elementary monotonicity property, which allows us to pass from \(\J_{u,\theta}\)-control to \(\J_{w,\eta}\)-control for suitable ranges of parameters.
\begin{lemma}\label{lem:Jtheta-to-Jeta}
Let \(0<\theta\le 1\), let \(\mu\) be a compactly supported probability measure on $\Rd$, and assume
\[
\J_{u,\theta}(\mu)<\infty
\]
for some \(u>0\).

\begin{enumerate}
\item If \(0<\eta\le \theta\), then
\[
\J_{w,\eta}(\mu)<\infty
\qquad\text{for every } 0<w\le \frac{\eta}{\theta}u.
\]

\item If \(\theta<\eta\le 1\), then
\[
\J_{w,\eta}(\mu)<\infty
\qquad\text{for every } 0<w<u.
\]
\end{enumerate}
\end{lemma}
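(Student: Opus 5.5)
Both parts are elementary comparisons of the integrals defining $\J_{w,\eta}(\mu)$ and $\J_{u,\theta}(\mu)$. Throughout we split $\Rd$ into $\{|z|\le 1\}$ and $\{|z|>1\}$. On the unit ball we only use $|\widehat\mu|\le 1$: the integrand of $\J_{w,\eta}(\mu)^{1/\eta}$ is at most $|z|^{w/\eta-d}$, which is integrable near the origin because $w>0$. So the low frequencies are always harmless, and all the work is on $|z|>1$.

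\emph{Part (1), $\eta\le\theta$.} Put $p=\theta/\eta\ge 1$ and write the integrand as
\[
|\widehat\mu(z)|^{2/\eta}|z|^{w/\eta-d}=\Bigl(|\widehat\mu(z)|^{2/\theta}|z|^{u/\theta-d}\Bigr)^{p}\,|z|^{\,w/\eta-d-p(u/\theta-d)}.
\]
The exponent of the leftover power is $w/\eta-u/\eta+d(p-1)$. Since $w\le \eta u/\theta$, this is at most $u/\theta-u/\eta+d(\theta/\eta-1)=(\theta/\eta-1)(d-u/\theta)$, which is not obviously $\le 0$. So a plain pointwise comparison does not work and I would instead bound $|\widehat\mu|$ pointwise.

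The substitute is a pointwise decay estimate coming from $\J_{u,\theta}(\mu)<\infty$ and compact support. Write $\widehat\mu=\widehat{\phi\mu}=\widehat\phi*\widehat\mu$ with $\phi\in C_c^\infty$ equal to $1$ on $\spt\mu$. Hölder with exponents $2/\theta$ and its conjugate then gives
\[
|\widehat\mu(z)|\lesssim |z|^{-(u-d\theta)/2}\quad\text{up to }\varepsilon\text{ losses},
\]
which is only useful when $u>d\theta$. This is the main obstacle, and I expect the intended route to be simpler: use Hölder on $|z|>1$ with exponent $p=\theta/\eta$ applied to $g=|\widehat\mu|^{2/\theta}|z|^{u/\theta-d}$:
\[
\int_{|z|>1} g^{\theta/\eta}\,|z|^{e}\,dz,\qquad e=w/\eta-d-\tfrac{\theta}{\eta}\bigl(\tfrac u\theta-d\bigr).
\]
Since $p\ge1$ this requires a pointwise bound on $g$, which itself is not available. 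So I would instead use log-convexity of the spectrum in the form of \cite[Theorem~1.1]{JMF}: the map $\theta\mapsto\fs\mu$ is concave with $\dim^0_{\mathrm F}\mu\ge 0$, hence $\dim_{\mathrm F}^{\eta}\mu\ge \frac{\eta}{\theta}\dim_{\mathrm F}^{\theta}\mu\ge\frac\eta\theta u$. This handles $w<\frac{\eta}{\theta}u$. For the endpoint $w=\frac\eta\theta u$, I would invoke the quantitative interpolation inequality from the proof of that theorem, namely Hölder between the $\theta$-energy and the trivial $L^\infty$ bound $|\widehat\mu|\le1$ (the case $\theta=0$, $s=0$). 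This gives directly $\J_{\eta u/\theta,\eta}(\mu)\le \J_{u,\theta}(\mu)^{\eta/\theta}$ after writing $|\widehat\mu|^{2/\eta}\le|\widehat\mu|^{2/\theta}$ and matching the weight exponents. Here one checks $w/\eta-d$ against $u/\theta-d$: they coincide when $w=\eta u/\theta$, so the comparison is exact.

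\emph{Part (2), $\eta>\theta$.} Here $2/\eta<2/\theta$, so Hölder goes the right way. On $|z|>1$ write
\[
|\widehat\mu|^{2/\eta}|z|^{w/\eta-d}=\bigl(|\widehat\mu|^{2/\theta}|z|^{u/\theta-d}\bigr)^{\theta/\eta}|z|^{(w-u)/\eta-d(1-\theta/\eta)}.
\]
Apply Hölder with exponents $\eta/\theta$ and $(1-\theta/\eta)^{-1}$. The second factor is
\[
\int_{|z|>1}|z|^{\frac{w-u}{\eta-\theta}-d}\,dz,
\]
which is finite exactly because $w<u$. This gives $\J_{w,\eta}(\mu)<\infty$.
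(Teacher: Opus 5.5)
Your final argument is correct. In Part (1) you end up with the paper's proof. You use concavity of $\rho\mapsto\dim_{\mathrm F}^{\rho}\mu$ from \cite{JMF}, together with $\dim_{\mathrm F}^{0}\mu\ge 0$, for $w<\eta u/\theta$. At the endpoint you use the pointwise bound $|\widehat\mu|^{2/\eta}\le|\widehat\mu|^{2/\theta}$ with exactly matching weights, which gives $\J_{\eta u/\theta,\eta}(\mu)\le \J_{u,\theta}(\mu)^{\eta/\theta}$.

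Part (2) is where you genuinely differ. The paper simply quotes from \cite{JMF} that $\rho\mapsto\dim_{\mathrm F}^{\rho}\mu$ is non-decreasing. You instead prove the needed monotonicity directly, by H\"older with exponents $\eta/\theta$ and $\eta/(\eta-\theta)$ on $\{|z|>1\}$. The leftover factor $\int_{|z|>1}|z|^{(w-u)/(\eta-\theta)-d}\,dz$ is finite precisely because $w<u$, and the unit ball is harmless since $w>0$. Your route is self-contained and yields an explicit bound for $\J_{w,\eta}(\mu)$; the paper's is shorter but leans on the structural theory of the spectrum.

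In fact you can dispense with \cite{JMF} altogether. For any $0<w\le \eta u/\theta$ one has $|z|^{w/\eta-d}\le |z|^{u/\theta-d}$ on $\{|z|>1\}$. So your endpoint comparison, together with your low-frequency remark, proves all of Part (1) at once, and the concavity step (in your proof and in the paper's) is redundant.

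When writing this up, delete the abandoned attempts: the factorisation with exponent $p=\theta/\eta$, and the pointwise-decay detour through $\widehat\phi*\widehat\mu$. Also say ``concavity'' rather than ``log-convexity''.
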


\begin{proof}
Write
\[
g(\rho):=\dim_\mathrm{F}^\rho \mu
=
\sup\{s\ge 0:\J_{s,\rho}(\mu)<\infty\},
\qquad 0\le \rho\le 1.
\]
It follows from \cite[Theorems 1.1 and 1.3]{JMF} that $g$ is non-decreasing and concave on $[0, 1]$. 

Since \(\mu\) is finite, \(|\widehat{\mu}|\le \mu(\mathbb R^d)\), and therefore \(g(0)\ge 0\).

Assume \(\J_{u,\theta}(\mu)<\infty\). Then \(g(\theta)\ge u\).

We first suppose \(0<\eta\le \theta\). Since \(g\) is concave on \([0,1]\),
\[
g(\eta)\ge \frac{\eta}{\theta}g(\theta)+\left(1-\frac{\eta}{\theta}\right)g(0)
\ge \frac{\eta}{\theta}u.
\]
Hence
\[
\J_{w,\eta}(\mu)<\infty
\qquad \text{for every }0<w<\frac{\eta}{\theta}u.
\]
It remains to treat the endpoint \(w=\frac{\eta}{\theta}u\). Set \(M:=\mu(\mathbb R^d)\). Then
\[
\J_{\eta u/\theta,\eta}(\mu)^{1/\eta}
=
\int_{\mathbb R^d} |\widehat{\mu}(\xi)|^{2/\eta} |\xi|^{u/\theta-d}\,d\xi.
\]
We split the integral into \(|\xi|\le 1\) and \(|\xi|>1\).

On \(\{|\xi|\le 1\}\), using \(|\widehat{\mu}(\xi)|\le M\),
\[
\int_{|\xi|\le 1} |\widehat{\mu}(\xi)|^{2/\eta} |\xi|^{u/\theta-d}\,d\xi
\le
M^{2/\eta}\int_{|\xi|\le 1} |\xi|^{u/\theta-d}\,d\xi<\infty,
\]
since \(u/\theta>0\).

On \(\{|\xi|>1\}\), since \(\eta\le \theta\), we have \(2/\eta-2/\theta\ge 0\), and so
\[
|\widehat{\mu}(\xi)|^{2/\eta}
=
|\widehat{\mu}(\xi)|^{2/\theta}\,
|\widehat{\mu}(\xi)|^{2/\eta-2/\theta}
\le
M^{2/\eta-2/\theta}|\widehat{\mu}(\xi)|^{2/\theta}.
\]
Therefore,
\[
\int_{|\xi|>1} |\widehat{\mu}(\xi)|^{2/\eta} |\xi|^{u/\theta-d}\,d\xi
\le
M^{2/\eta-2/\theta}
\int_{|\xi|>1} |\widehat{\mu}(\xi)|^{2/\theta} |\xi|^{u/\theta-d}\,d\xi
<\infty,
\]
because \(\J_{u,\theta}(\mu)<\infty\). Thus, \(\J_{\eta u/\theta,\eta}(\mu)<\infty\), proving part (1).

We now move to the case \(\theta<\eta\le 1\). Since \(g\) is non-decreasing,
\[
g(\eta)\ge g(\theta)\ge u.
\]
Hence
\[
\J_{w,\eta}(\mu)<\infty
\qquad\text{for every }0<w<u.
\]
This proves part (2).
\end{proof}
We are now ready to prove Theorem \ref{thm-ref}.
\begin{proof}[Proof of Theorem \ref{thm-ref}]
For simplicity, let $s:=\dim_\textup{F}^{\theta_2} \mu_2$. For each $v\in(0,s)$, write
\[
a(v):=\min\!\left\{\frac d2,\,v,\,\frac{v}{2\theta_2}\right\}.
\]
Then $a(v)\uparrow a$ as $v\uparrow s$.

\smallskip
\noindent
\textbf{Case 1: $0<u\le d\theta_1$.}
Fix
\[
0<\sigma<\min\{1,\beta(u)\}.
\]
We may choose parameters
\[w<u, \qquad
0<v<s,
\qquad
0<\gamma<a(v),
\]
so that
\begin{equation}\label{eq:case1-sigma-choice}
\sigma<\frac{w+2\theta_1\gamma-d\theta_1}{2},
\end{equation}
since
\[
\frac{w+2\theta_1\gamma-d\theta_1}{2}
\longrightarrow
\frac{u+2\theta_1 a-d\theta_1}{2}=\beta(u),
\]
as $w\uparrow u$, $v\uparrow s$ and $\gamma\uparrow a(v)$.

By the definition of $\dim_\textup{F}^{\theta_1} \mu_1$ and $\dim_\textup{F}^{\theta_2} \mu_2$, we have
$
\J_{w,\theta_1}(\mu_1)<\infty,
$
and 
$
\J_{v,\theta_2}(\mu_2)<\infty.
$
Hence, Proposition~\ref{prop:A-decay-theta1-theta2} implies
\[
A(\tau)\lesssim (1+|\tau|)^{-\frac{w+2\theta_1\gamma-d\theta_1}{2}}.
\]
By \eqref{eq:case1-sigma-choice},
\[
\int_{\R}A(\tau)|\tau|^{\sigma-1}\,d\tau<\infty.
\]
By \eqref{eq:pinned-fourier-abs} and the definition \eqref{eq:A-def} of $A(\tau)$, we have, for every $\tau\in\R$,
\begin{equation}\label{eq:A-as-pinned-average}
A(\tau)=\int_{\Rd}\bigl|\widehat{\delta_{x,\mu_2}^2}(\tau)\bigr|^2\,d\mu_1(x).
\end{equation}
Thus, Tonelli's theorem gives
\[
\int_{\Rd}\int_{\R}\bigl|\widehat{\delta_{x,\mu_2}^2}(\tau)\bigr|^2|\tau|^{\sigma-1}\,d\tau\,d\mu_1(x)<\infty.
\]
Therefore, for $\mu_1$-almost every $x\in\Rd$,
\[
\int_{\R}\bigl|\widehat{\delta_{x,\mu_2}^2}(\tau)\bigr|^2|\tau|^{\sigma-1}\,d\tau<\infty,
\]
and Lemma~\ref{lem:pinned-distance-measure-fourier} implies that
\[
\hd D_x(\supp\mu_2)\ge \sigma
\qquad\text{for $\mu_1$-almost every }x\in\Rd.
\]
Since $\sigma<\min\{1,\beta(u)\}$ was arbitrary, we conclude that
\[
\hd D_x(\supp\mu_2)\ge \min\{1,\beta(u)\}
\qquad\text{for $\mu_1$-almost every }x\in\Rd.
\]

If $\beta(u)>1$, choose $v\in(0,s)$ and $\gamma\in(0,a(v))$ so that
\[
\frac{u+2\theta_1\gamma-d\theta_1}{2}>1.
\]
Then, Proposition~\ref{prop:A-decay-theta1-theta2} gives $A\in L^1(\R)$, and \eqref{eq:A-as-pinned-average} implies
\[
\int_{\Rd}\int_{\R}\bigl|\widehat{\delta_{x,\mu_2}^2}(\tau)\bigr|^2\,d\tau\,d\mu_1(x)<\infty.
\]
Hence, for $\mu_1$-almost every $x\in\Rd$,
\[
\int_{\R}\bigl|\widehat{\delta_{x,\mu_2}^2}(\tau)\bigr|^2\,d\tau<\infty,
\]
and Lemma~\ref{lem:pinned-distance-measure-fourier} yields
\[
\cL^1\bigl(D_x(\supp\mu_2)\bigr)>0
\qquad\text{for $\mu_1$-almost every }x\in\Rd.
\]
\textbf{Case 2: $d\theta_1<u\le d$.}
Fix
\[
0<\sigma<\min\left\{1,\frac{ua}{d}\right\}.
\]
Choose $u_0\in(d\theta_1,u)$ and $v\in(0,s)$ so close to $u$ and $s$, respectively, that
\begin{equation}\label{eq:case2-u0-choice}
\sigma<\frac{u_0a(v)}{d}.
\end{equation}
Set
\[
\eta:=\frac{u_0}{d}.
\]
Then $\theta_1<\eta\le 1$, and since $u_0<u=\dim_\textup{F}^{\theta_1} \mu_1$, we have
\[
\J_{u_0,\theta_1}(\mu_1)<\infty.
\]
Part~(2) of Lemma~\ref{lem:Jtheta-to-Jeta} therefore implies
\[
\J_{w,\eta}(\mu_1)<\infty
\qquad\text{for every }0<w<u_0.
\]
Because $d\eta=u_0$, the exponent
\[
\frac{w+2\eta\gamma-d\eta}{2}
=
\frac{w+2(u_0/d)\gamma-u_0}{2}
\]
converges to $u_0a(v)/d$ as $w\uparrow u_0$ and $\gamma\uparrow a(v)$. In view of
\eqref{eq:case2-u0-choice}, we may choose
\[
0<w<u_0,
\qquad
0<\gamma<a(v),
\]
so that
\begin{equation}\label{eq:case2-sigma-choice}
\sigma<\frac{w+2\eta\gamma-d\eta}{2}.
\end{equation}
Applying Proposition~\ref{prop:A-decay-theta1-theta2} with $(w,\eta)$ in place of $(u,\theta_1)$ and with the same $v$ gives
\[
A(\tau)\lesssim (1+|\tau|)^{-\frac{w+2\eta\gamma-d\eta}{2}}.
\]
By \eqref{eq:case2-sigma-choice},
\[
\int_{\R}A(\tau)|\tau|^{\sigma-1}\,d\tau<\infty.
\]
Using \eqref{eq:A-as-pinned-average} and Tonelli once again, we obtain
\[
\int_{\Rd}\int_{\R}\bigl|\widehat{\delta_{x,\mu_2}^2}(\tau)\bigr|^2|\tau|^{\sigma-1}\,d\tau\,d\mu_1(x)<\infty.
\]
Hence, for $\mu_1$-almost every $x\in\Rd$,
\[
\int_{\R}\bigl|\widehat{\delta_{x,\mu_2}^2}(\tau)\bigr|^2|\tau|^{\sigma-1}\,d\tau<\infty,
\]
and Lemma~\ref{lem:pinned-distance-measure-fourier} gives
\[
\hd D_x(\supp\mu_2)\ge \sigma
\qquad\text{for $\mu_1$-almost every }x\in\Rd.
\]
Since $\sigma<\min\{1,ua/d\}$ was arbitrary, we conclude that
\[
\hd D_x(\supp\mu_2)\ge \min\left\{1,\frac{ua}{d}\right\}
\qquad\text{for $\mu_1$-almost every }x\in\Rd.
\]

If $ua/d>1$, choose $u_0\in(d\theta_1,u)$ and $v\in(0,s)$ such that
$u_0a(v)/d>1$. Define $\eta=u_0/d$ as above. Then choose $w<u_0$ and
$\gamma<a(v)$ so that
\[
\frac{w+2\eta\gamma-d\eta}{2}>1.
\]
Proposition~\ref{prop:A-decay-theta1-theta2} again gives $A\in L^1(\R)$, and the same Tonelli argument as above shows that, for $\mu_1$-almost every $x\in\Rd$,
\[
\int_{\R}\bigl|\widehat{\delta_{x,\mu_2}^2}(\tau)\bigr|^2\,d\tau<\infty.
\]
Lemma~\ref{lem:pinned-distance-measure-fourier} then implies
\[
\cL^1\bigl(D_x(\supp\mu_2)\bigr)>0
\qquad\text{for $\mu_1$-almost every }x\in\Rd.
\]
This completes the proof.
\end{proof}

\section{Proof of Theorem \ref{thm-theta-zero}} \label{sec:proof2}

To prove Theorem \ref{thm-theta-zero}, we make use of the following lemma.
\begin{lemma}\label{proptheta0}
Let \(d\ge 2\), and let \(\mu_1,\mu_2\) be compactly supported probability
measures on \(\Rd\). Assume that, for some \(s>0\),
\[
|\widehat{\mu_1}(\xi)| \lesssim \ang{\xi}^{-s/2}
\qquad (\xi\in\Rd).
\]
Then, for every \(0<\sigma<\min\{1,s/2\}\) such that \(I_\sigma(\mu_2)<\infty\),
\[
\hd D_x(\spt \mu_2)\ge \sigma
\qquad \text{for $\mu_1$-almost all }x\in\Rd.
\]
Moreover, if \(s>2\) and \(I_1(\mu_2)<\infty\), then
\[
\cL^1\bigl(D_x(\spt \mu_2)\bigr)>0
\qquad \text{for $\mu_1$-almost all }x\in\Rd.
\]
\end{lemma}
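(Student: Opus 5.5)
The plan is to run the same averaging argument as in the proof of Theorem \ref{thm-ref}. This time, though, we expand $|\widehat{\delta_{x,\mu_2}^2}(\tau)|^2$ and integrate in $x$ first, so that the decay of $\widehat{\mu_1}$ is used directly. The energy of $\mu_2$ then absorbs what remains.

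\textbf{Step 1: reduce to a bound on $\widehat{\mu_1}$.} As in \eqref{eq:A-def} and \eqref{eq:A-as-pinned-average}, set
\[
A(\tau)=\int_{\Rd}\bigl|\widehat{\delta_{x,\mu_2}^2}(\tau)\bigr|^2\,d\mu_1(x).
\]
Writing the square as a double integral over $(y,z)\in\spt\mu_2\times\spt\mu_2$ and using
\[
|x-y|^2-|x-z|^2=|y|^2-|z|^2-2x\cdot(y-z),
\]
we get
\[
A(\tau)=\iint e^{-2\pi i\tau(|y|^2-|z|^2)}\,\overline{\widehat{\mu_1}\bigl(2\tau(y-z)\bigr)}\;d\mu_2(y)\,d\mu_2(z),
\]
possibly up to a harmless sign inside $\widehat{\mu_1}$. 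All integrands are bounded and the measures are finite, so Fubini applies. Taking absolute values and using the decay hypothesis gives
\[
A(\tau)\lesssim \iint \bigl(1+|\tau||y-z|\bigr)^{-s/2}\,d\mu_2(y)\,d\mu_2(z).
\]

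\textbf{Step 2: integrate in $\tau$ against $|\tau|^{\sigma-1}$.} Apply Tonelli, since everything is nonnegative. For $y\neq z$, substitute $t=\tau|y-z|$:
\[
\int_{\R}\bigl(1+|\tau||y-z|\bigr)^{-s/2}|\tau|^{\sigma-1}\,d\tau=|y-z|^{-\sigma}\int_{\R}(1+|t|)^{-s/2}|t|^{\sigma-1}\,dt.
\]
The last integral is finite exactly when $0<\sigma<s/2$. Hence
\[
\int_{\R}A(\tau)|\tau|^{\sigma-1}\,d\tau\lesssim I_\sigma(\mu_2)<\infty.
\]
The diagonal $\{y=z\}$ is $\mu_2\otimes\mu_2$-null, because $I_\sigma(\mu_2)<\infty$.

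\textbf{Step 3: conclude.} By Tonelli again, $\int_{\R}|\widehat{\delta_{x,\mu_2}^2}(\tau)|^2|\tau|^{\sigma-1}\,d\tau<\infty$ for $\mu_1$-almost every $x$. Lemma \ref{lem:pinned-distance-measure-fourier} then gives $\hd D_x(\spt\mu_2)\ge\sigma$; this step needs $\sigma<1$, which the hypothesis $\sigma<\min\{1,s/2\}$ provides.

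For the Lebesgue measure statement, repeat Step 2 with the exponent $\sigma=1$, i.e. with no weight. The $t$-integral becomes $\int(1+|t|)^{-s/2}\,dt$, which is finite precisely when $s>2$. This gives $\int_{\R}A\,d\tau\lesssim I_1(\mu_2)<\infty$. So $\widehat{\delta_{x,\mu_2}^2}\in L^2$ for $\mu_1$-almost every $x$, and the second part of Lemma \ref{lem:pinned-distance-measure-fourier} yields $\cL^1(D_x(\spt\mu_2))>0$.

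\textbf{Main obstacle.} There is no serious obstacle. The only care needed is in justifying the interchanges of integration in Step 1 and in handling the diagonal in Step 2; both are routine given boundedness and the finiteness of the energy.
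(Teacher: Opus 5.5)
Your proposal is correct and follows the paper's proof essentially step for step. It uses the same expansion of $A(\tau)$ via $|x-y|^2-|x-z|^2=|y|^2-|z|^2-2x\cdot(y-z)$, the same scaling substitution reducing the $\tau$-integral to a constant times $|y-z|^{-\sigma}$ (respectively $|y-z|^{-1}$ when $s>2$), and the same Tonelli argument combined with Lemma~\ref{lem:pinned-distance-measure-fourier}. Your remark that the diagonal is $\mu_2\otimes\mu_2$-null spells out a point the paper leaves implicit, and your expression $\overline{\widehat{\mu_1}(2\tau(y-z))}$ needs no sign hedge: it equals $\widehat{\mu_1}(-2\tau(y-z))$ exactly, since $\mu_1$ is a positive measure.
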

\begin{proof}
For \(\tau\in\R\), recall
\[
A(\tau)=\int_{\Rd}\bigl|\widehat{\delta_{x,\mu_2}^2}(\tau)\bigr|^2\,d\mu_1(x).
\]
By Lemma~\ref{lem:pinned-distance-measure-fourier}, Fubini's theorem, and the identity
\[
|x-y|^2-|x-z|^2 = |y|^2-|z|^2 - 2x\cdot (y-z),
\]
we obtain
\begin{align}
A(\tau)
&=
\iiint
e^{-2\pi i\tau(|x-y|^2-|x-z|^2)}
\,d\mu_1(x)\,d\mu_2(y)\,d\mu_2(z) \notag\\
&=
\iint
e^{-2\pi i\tau(|y|^2-|z|^2)}
\widehat{\mu_1}\bigl(-2\tau(y-z)\bigr)
\,d\mu_2(y)\,d\mu_2(z). \label{eq:theta-zero-A}
\end{align}
Hence
\[
A(\tau)
\lesssim
\iint
(1+2|\tau||y-z|)^{-s/2}
\,d\mu_2(y)\,d\mu_2(z).
\]

Fix \(0<\sigma<\min\{1,s/2\}\), and assume \(I_\sigma(\mu_2)<\infty\).
Since the integrand is non-negative, Tonelli's theorem gives
\begin{align*}
\int_{\R} A(\tau)|\tau|^{\sigma-1}\,d\tau
&\lesssim
\iint
\left(
\int_{\R}
(1+2|\tau||y-z|)^{-s/2}
|\tau|^{\sigma-1}\,d\tau
\right)
d\mu_2(y)\,d\mu_2(z).
\end{align*}
For \(r=|y-z|>0\), the change of variables \(u=2|\tau|r\) yields
\[
\int_{\R}
(1+2|\tau|r)^{-s/2}|\tau|^{\sigma-1}\,d\tau
=
c_{\sigma,s}\,r^{-\sigma},
\]
where
\[
c_{\sigma,s}
=
2^{1-\sigma}\int_0^\infty (1+u)^{-s/2}u^{\sigma-1}\,du
<\infty,
\]
since \(0<\sigma<s/2\). Therefore
\[
\int_{\R} A(\tau)|\tau|^{\sigma-1}\,d\tau
\lesssim
\iint |y-z|^{-\sigma}\,d\mu_2(y)\,d\mu_2(z)
=
I_\sigma(\mu_2)
<\infty.
\]
It follows from Tonelli's theorem again that
\[
\int_{\Rd}\int_{\R}
\bigl|\widehat{\delta_{x,\mu_2}^2}(\tau)\bigr|^2
|\tau|^{\sigma-1}\,d\tau\,d\mu_1(x)
<\infty.
\]
Hence, for \(\mu_1\)-almost every \(x\in\Rd\),
\[
\int_{\R}
\bigl|\widehat{\delta_{x,\mu_2}^2}(\tau)\bigr|^2
|\tau|^{\sigma-1}\,d\tau
<\infty.
\]
Lemma~\ref{lem:pinned-distance-measure-fourier} then implies
\[
\hd D_x(\spt \mu_2)\ge \sigma
\qquad\text{for $\mu_1$-almost all }x\in\Rd.
\]

For the positive Lebesgue measure statement, assume \(s>2\) and \(I_1(\mu_2)<\infty\).
By the same argument, but without the weight \(|\tau|^{\sigma-1}\), we obtain
\begin{align*}
\int_{\R} A(\tau)\,d\tau
&\lesssim
\iint
\left(
\int_{\R}(1+2|\tau||y-z|)^{-s/2}\,d\tau
\right)
d\mu_2(y)\,d\mu_2(z) \\
&=
C_s
\iint |y-z|^{-1}\,d\mu_2(y)\,d\mu_2(z)
=
C_s I_1(\mu_2)
<\infty,
\end{align*}
where
\[
C_s = \int_{\R}(1+|u|)^{-s/2}\,du < \infty.
\]
Therefore,
\[
\int_{\Rd}\int_{\R}
\bigl|\widehat{\delta_{x,\mu_2}^2}(\tau)\bigr|^2
\,d\tau\,d\mu_1(x)
<\infty,
\]
so, for \(\mu_1\)-almost every \(x\in\Rd\),
\[
\int_{\R}
\bigl|\widehat{\delta_{x,\mu_2}^2}(\tau)\bigr|^2
\,d\tau
<\infty.
\]
Lemma~\ref{lem:pinned-distance-measure-fourier} implies
\[
\cL^1\bigl(D_x(\spt \mu_2)\bigr)>0
\qquad\text{for $\mu_1$-almost all }x\in\Rd.
\]
This completes the proof.
\end{proof}

\begin{proof}[Proof of Theorem \ref{thm-theta-zero}]
We first prove (1). If \(\fd\mu=0\), then the conclusion is trivial. So assume
\(\fd\mu>0\).

Let \((\sigma_n)_{n\ge 1}\) be an increasing sequence in
\[
\left(0,\min\left\{1,\frac{\fd\mu}{2}\right\}\right)
\]
such that
\[
\sigma_n\uparrow \min\left\{\frac{\fd\mu}{2},1\right\}.
\]
For each \(n\), choose \(t_n\) such that
\[
2\sigma_n<t_n<\fd\mu.
\]
Since \(t_n<\fd\mu\), by the definition of Fourier dimension,
\[
|\widehat{\mu}(\xi)|\lesssim \langle \xi\rangle^{-t_n/2}
\qquad (\xi\in\Rd).
\]
Moreover,
\[
I_{\sigma_n}(\mu)
\approx
\int_{\Rd} |\widehat{\mu}(\xi)|^2 |\xi|^{\sigma_n-d}\,d\xi
\lesssim
\int_{\Rd} \langle \xi\rangle^{-t_n} |\xi|^{\sigma_n-d}\,d\xi
<\infty,
\]
because \(\sigma_n<t_n\).

Therefore, Lemma~\ref{proptheta0}, applied with
\(\mu_1=\mu_2=\mu\) and \(s=t_n\), yields a set \(G_n\subseteq \spt\mu\) such that
\[
\mu(G_n)=1
\]
and
\[
\hd D_x(\spt\mu)\ge \sigma_n
\qquad\text{for all }x\in G_n.
\]
Now let
\[
G=\bigcap_{n=1}^\infty G_n.
\]
Then \(\mu(G)=1\). Also, for every \(x\in G\) and every \(n\),
\[
\hd D_x(\spt\mu)\ge \sigma_n.
\]
Letting \(n\to\infty\), we conclude that
\[
\hd D_x(\spt\mu)\ge \min\left\{\frac{\fd\mu}{2},1\right\}
\qquad\text{for all }x\in G.
\]
This proves the first claim.

To prove the second claim in (1), assume that \(\fd\mu>2\). Choose \(t\) such that
\[
2<t<\fd\mu.
\]
Then, by the definition of Fourier dimension,
\[
|\widehat{\mu}(\xi)|\lesssim \langle \xi\rangle^{-t/2}
\qquad (\xi\in\Rd).
\]
Since \(1<t\), we have
\[
I_1(\mu)
\approx
\int_{\Rd} |\widehat{\mu}(\xi)|^2 |\xi|^{1-d}\,d\xi
\lesssim
\int_{\Rd} \langle \xi\rangle^{-t} |\xi|^{1-d}\,d\xi
<\infty.
\]
Applying Lemma~\ref{proptheta0} with \(\mu_1=\mu_2=\mu\), we obtain
\[
\cL^1\bigl(D_x(\spt\mu)\bigr)>0
\qquad\text{for \(\mu\)-almost all }x\in\Rd.
\]

We now prove (2). Recalling that $\mu$ is compactly supported, by \cite[Proposition~4.2]{restriction2},
\[
\fs\mu \le \fd\mu + d\theta
\]
and therefore
\[
\fd\mu \ge \fs\mu-d\theta.
\]
Applying (1), we obtain
\[
\hd D_x(\spt\mu)\ge \min\left\{\frac{\fd\mu}{2},1\right\}
\ge
\min\left\{\frac{\fs\mu-d\theta}{2},1\right\}
\qquad\text{for \(\mu\)-almost all }x\in\Rd.
\]
This proves the first claim in (2).

If \(\fs\mu > 2+d\theta\), then $\fd \mu>2$ and applying (1) 
\[
\mathcal{L}^1\left(D_x(\spt\mu)\right)>0 
\qquad\text{for \(\mu\)-almost all }x\in\Rd.
\]
This completes the proof.
\end{proof}

\end{document}